\newtheorem{proposition}{Proposition}[section]
\newtheorem{lemma}[proposition]{Lemma}
\newtheorem{corollary}[proposition]{Corollary}
\newtheorem{theorem}[proposition]{Theorem}
\theoremstyle{definition}
\newtheorem{definition}[proposition]{Definition}
\newtheorem{example}[proposition]{Example}
\newtheorem{examples}[proposition]{Examples}
\newtheorem{remark}[proposition]{Remark}
\newtheorem{remarks}[proposition]{Remarks}
\newcommand{\thlabel}[1]{\label{th:#1}}
\newcommand{\thref}[1]{Theorem~\ref{th:#1}}
\newcommand{\selabel}[1]{\label{se:#1}}
\newcommand{\seref}[1]{Section~\ref{se:#1}}
\newcommand{\lelabel}[1]{\label{le:#1}}
\newcommand{\leref}[1]{Lemma~\ref{le:#1}}
\newcommand{\prlabel}[1]{\label{pr:#1}}
\newcommand{\prref}[1]{Proposition~\ref{pr:#1}}
\newcommand{\colabel}[1]{\label{co:#1}}
\newcommand{\coref}[1]{Corollary~\ref{co:#1}}
\newcommand{\relabel}[1]{\label{re:#1}}
\newcommand{\reref}[1]{Remark~\ref{re:#1}}
\newcommand{\exlabel}[1]{\label{ex:#1}}
\newcommand{\exref}[1]{Example~\ref{ex:#1}}
\newcommand{\delabel}[1]{\label{de:#1}}
\newcommand{\deref}[1]{Definition~\ref{de:#1}}
\newcommand{\eqlabel}[1]{\label{eq:#1}}
\newcommand{\equref}[1]{(\ref{eq:#1})}
\def\ot{\otimes}
\def\RR{{\mathbb R}}
\def\CC{{\mathbb C}}
\def\ZZ{{\mathbb Z}}
\newcommand{\Cc}{\mathcal{C}}
\newcommand{\Mm}{\mathcal{M}}
\def\*C{{}^*\hspace*{-1pt}{\Cc}}
\def\text#1{{\rm {\rm #1}}}
\begin{document}
\title[Jacobi and Poisson algebras]
{Jacobi and Poisson algebras}

\author{A. L. Agore}
\address{Faculty of Engineering, Vrije Universiteit Brussel, Pleinlaan 2, B-1050 Brussels, Belgium
\textbf{and} Department of Applied Mathematics, Bucharest
University of Economic Studies, Piata Romana 6, RO-010374
Bucharest 1, Romania} \email{ana.agore@vub.ac.be and
ana.agore@gmail.com}

\author{G. Militaru}
\address{Faculty of Mathematics and Computer Science, University of Bucharest, Str.
Academiei 14, RO-010014 Bucharest 1, Romania}
\email{gigel.militaru@fmi.unibuc.ro and gigel.militaru@gmail.com}

\thanks{A.L. Agore is Postoctoral Fellow of the Fund for Scientific
Research-Flanders (Belgium) (F.W.O. Vlaanderen). This work was
supported by a grant of the Romanian National Authority for
Scientific Research, CNCS-UEFISCDI, grant no. 88/05.10.2011.}

\subjclass[2010]{17B63, 17B60, 17C10} \keywords{Jacobi algebra,
representations and Frobenius objects, unified products.}


\maketitle

\begin{abstract}
Jacobi/Poisson algebras are algebraic counterparts of
Jacobi/Poisson manifolds. We introduce representations of a Jacobi
algebra $A$ and Frobenius Jacobi algebras as symmetric objects in
the category. A characterization theorem for Frobenius Jacobi
algebras is given in terms of integrals on Jacobi algebras. For a
vector space $V$ a non-abelian cohomological type object
${\mathcal J}{\mathcal H}^{2} \, (V, \, A)$ is constructed: it
classifies all Jacobi algebras containing $A$ as a subalgebra of
codimension equal to ${\rm dim} (V)$. Representations of $A$ are
used in order to give the decomposition of ${\mathcal J}{\mathcal
H}^{2} \, (V, \, A)$ as a coproduct over all Jacobi $A$-module
structures on $V$. The bicrossed product $P \bowtie Q$ of two
Poisson algebras recently introduced by Ni and Bai appears as a
special case of our construction. A new type of deformations of a
given Poisson algebra $Q$ is introduced and a cohomological type
object $\mathcal{H}\mathcal{A}^{2} \bigl(P,\, Q ~|~
(\triangleleft, \, \triangleright, \, \leftharpoonup, \,
\rightharpoonup)\bigl)$ is explicitly constructed as a classifying
set for the bicrossed descent problem for extensions of Poisson
algebras. Several examples and applications are provided.
\end{abstract}

\section*{Introduction}
Noncommutative geometry as well as quantum group theory is based
on the same idea: instead of working with points on a given
\emph{space} $M$ which can be a compact topological group, a Lie
group, an algebraic group, a manifold, etc. we can work
equivalently with the algebra ${\rm Fun} \, (M)$ of functions on
$M$. The definition of the algebra of functions depends on the
category of spaces that we are dealing with: for instance, if $M$
is a compact topological group then ${\rm Fun} \, (M) := {\mathcal
R} \, (M)$, the algebra of real-valued continuous representative
functions on $M$, while if $M$ is a manifold then ${\rm Fun} \,
(M) := C^{\infty} (M)$, the algebra of all real smooth functions
on $M$. Thus there exists a contravariant functor ${\rm Fun} \, (
- )$, which very often is in fact a duality of categories, between
a certain category of spaces and a certain category of algebras
endowed with additional structures compatible with the algebra
structure such as coalgebras (i.e. Hopf algebras), Lie brackets
(i.e. Poisson algebras) etc. Adopting this categorical viewpoint,
purely geometric concepts sat at the foundation of a research
program started at the end of the 80's which deals with their
algebraic counterparts: the functor ${\rm Fun} \, ( - )$ being the
tool through which geometrical problems can be restated and
approached in algebraic language by way of a certain category of
algebras. The present paper fits within this context: we deal with
Jacobi algebras which are abstract algebraic counterparts of
Jacobi manifolds. Jacobi manifolds, as generalizations of
symplectic or more generally Poisson manifolds, were introduced
independently by Kirillov \cite{kirillov} and Lichnerowicz
\cite{lich}. A Jacobi manifold is a smooth manifold endowed with a
bivector field $\Lambda$ and a vector field $E$ satisfying some
compatibility conditions. When the vector field $E$ identically
vanishes, the Jacobi manifold is just a Poisson manifold.
Equivalently, a Jacobi manifold is a smooth manifold $M$ such that
the commutative algebra $A := C^{\infty} (M)$ of real smooth
functions on $M$ is endowed with a Lie bracket $[-, -]$ such that
$[ab, \, c ] =  a \, [b, \, c] + [a, \, c] \, b  - ab \, [1_A, \,
c]$, for all $a$, $b$, $c\in A$. Such an algebraic object $A$ is
called a Jacobi algebra while a Poisson algebra is just a Jacobi
algebra such that $[1_A, \, a] = 0$, for all $a\in A$. Poisson
algebras appear in several areas of research situated at the
border between mathematics and physics such as: Hamiltonian
mechanics, differential geometry, Lie groups and representation
theory, noncommutative algebraic/diferential geometry,
(super)integrable systems, quantum field theory, vertex operator
algebras, quantum groups and so on -- see the recent papers
\cite{CM, gra2013, LPV} and the references therein. If we look at
Poisson algebras as the 'differential' version of Hopf algebras,
then, mutatis-mutandis, Jacobi algebras can be seen as
generalizations of Poisson algebras in the same way as weak Hopf
algebras \cite{bohm} generalize Hopf algebras. It is therefore
natural to expect that Jacobi algebras will play an important role
in all fields enumerated above. For further details on the study
of Jacobi algebras from geometric view point we refer to
\cite{grab92, grabM, gra2013, marle, rovi}.

The paper is organized as follows: in \seref{prel} we survey the
basic concepts that will be used throughout the paper. In
particular, we recall the classical concept of a Frobenius algebra
together with the corresponding notion for Lie algebras which we
will call \emph{Frobenius Lie} algebra. The latter were previously
studied both in mathematics \cite{kat, medina} and physics
\cite{fig, pelc} under the name of self-dual or metric Lie
algebras. The property of being Frobenius reflects a certain
natural symmetry: for instance, a functor $F : {\mathcal C} \to
{\mathcal D}$ is called Frobenius \cite{CMZ2002} if $F$ has the
same left and right adjoint functor, while a finite dimensional
algebra $A$ is called Frobenius \cite{kadison} if $A \cong A^*$,
as right $A$-modules, which is the module version of the classical
problem of Frobenius asking when two canonical representations of
$A$ are equivalent. This idea will be used in \seref{frob} in the
definition of Frobenius Jacobi algebras. For more details on the
importance of Frobenius algebras as well as their applications to
topology, algebraic geometry and 2D topological quantum field
theories we refer to \cite{Kock}, for their categorical
generalization at the level of various types of (co)modules to
\cite{BW, CMZ2002} and for recent contributions and their
relevance to other fields (category theory, Hochschild cohomology
or graph theory, etc.) see \cite{bai, Miu2, lambre} and their
references. In \seref{frob} we present basic properties of Jacobi
algebras: in particular, we classify all complex Jacobi algebras
of dimension $2$ or $3$. The \emph{conformal deformation} of a
Jacobi algebra $A$ (\prref{conformal}) is the Jacobi version of
the Drinfel'd twist deformation of a quantum group. We introduce
the category ${\mathcal J} \Mm^A_A$ of \emph{Jacobi $A$-modules}
which is an equivalent way of defining representations of a Jacobi
algebra $A$ and based on this concept we define a \emph{Frobenius
Jacobi} algebra as a Jacobi algebra $A$ such that $A \cong A^*$,
as Jacobi $A$-modules. A weaker version of this notion, at the
level of Poisson algebras, was recently introduced in \cite{ZVZ}
where the term Frobenius Poisson algebra is used to denote Poisson
algebras $A$ such that $A \cong A^*$, isomorphism of right
$A$-modules. The concept of \emph{integral} on a Jacobi algebra is
introduced in \deref{integrale} having the Hopf algebra theory as
source of inspiration and it is used in the characterization
theorem of Frobenius Jacobi algebras (\thref{caracfrj}): a Jacobi
algebra $A$ is Frobenius if and only if there exists a
non-degenerate integral on $A$; in the finite dimensional case,
this is also equivalent to the existence of a so-called
\emph{Jacobi-Frobenius pair} of $A$, which allows us to define the
Euler-Casimir element associated to any finite dimensional
Frobenius Jacobi algebra.

The classification of finite dimensional Poisson manifolds is a
difficult task: the first steps towards the classification of low
dimensional Poisson manifolds were taken in \cite{gra1993, LPV}
using mainly differential geometry tools. It is natural to ask a
more general question namely that of classifying Jacobi manifolds
of a given dimension. Following the viewpoint of this paper we
look at its algebraic counterpart by asking about the
classification of all Jacobi algebras of a given dimension. The
problem is very difficult since it contains as subsequent
questions the classical problems of classifying all associative
(resp. Lie) algebras of a given dimension. For updates on the
progress made so far and the geometrical significance we refer to
\cite{am-2013d, mazzola} (resp. \cite{popovici}) and their list of
references - we just mention that the classification of all
complex associative (resp. Lie) algebras is known only up to
dimension $5$ (resp. $7$). One of the main tools which was
intensively used in the classification of finite 'objects' is the
famous \emph{extension problem} initiated at the level of groups
by H\"{o}lder and developed later on for Lie algebras, associative
algebras, Hopf algebras, Poisson algebras, etc. A more general
version of the extension problem is the \emph{extending structures
problem} (ES-problem) introduced in \cite{am-2011} for arbitrary
categories. \seref{unifiedprod} is devoted to the study of the
ES-problem for Jacobi algebras which consists of the following
question:

\textit{Let $A$ be a Jacobi algebra and $E$ a vector space
containing $A$ as a subspace. Describe and classify the set of all
Jacobi algebra structures that can be defined on $E$ such that $A$
becomes a Jacobi subalgebra of $E$.}

If we fix $V$ a complement of $A$ in the vector space $E$ then the
ES-problem can be rephrased equivalently as follows: describe and
classify all Jacobi algebras containing $A$ as a subalgebra of
codimension equal to ${\rm dim} (V)$. The answer to the ES-problem
is given in three steps: in the first step (\thref{1}) we shall
construct the \emph{unified product} $A \ltimes V$ associated to a
Jacobi algebra $A$ and a vector space $V$ connected through four
'actions' and two 'cocycles'. The unified product is a very
general construction containing as special cases the semi-direct
product, the crossed product or the bicrossed product. The second
step (\prref{classif}) shows that a Jacobi algebra structure $(E,
\ast_E, [-, \, -]_E )$ on $E$ contains $A$ as a Jacobi subalgabra
if and only if there exists an isomorphism of Jacobi algebras $(E,
\ast_E, [-, \, -]_E ) \cong A \ltimes V$. Finally, the theoretical
answer to the ES-problem is given in \thref{main1}: for a fixed
complement $V$ of $A$ in $E$, a non-abelian cohomological type
object ${\mathcal J}{\mathcal H}^{2} \, (V, \, A)$ is explicitly
constructed; it parameterizes and classifies all Jacobi algebras
containing $A$ as a subalgebra of codimension equal to ${\rm dim}
(V)$. The classification obtained in this theorem follows the
viewpoint of the extension problem: that is, up to an isomorphism
of Jacobi algebras that stabilizes $A$ and co-stabiliezes $V$. For
the sake of completeness we also write down the corresponding
results for Poisson algebras. Computing the classifying object
${\mathcal J}{\mathcal H}^{2} \, (V, \, A)$, for a given Jacobi
algebra $A$ and a vector space $V$ is a highly nontrivial
question. If $A := k$, the base field viewed as a Jacobi algebra
with the trivial bracket, then ${\mathcal J}{\mathcal H}^{2} \,
(V, \, k)$ classifies in fact all Jacobi algebras of dimension $1
+ {\rm dim} (V)$, which is of course a hopeless question for
vector spaces of large dimension. For this reason we shall assume
from now on that $A \neq k$. A very important step in computing
${\mathcal J}{\mathcal H}^{2} \, (V, \, A)$ is given in
\coref{descompunere}, where a decomposition of it as a coproduct
over all Jacobi $A$-modules structures on $V$ is given.
\seref{exemple} is devoted to computing ${\mathcal J}{\mathcal
H}^{2} \, (V, \, A)$ for what we have called \emph{flag Jacobi}
algebras over $A$: that is, a Jacobi algebra structure on $E$ such
that there exists a finite chain of Jacobi subalgebras $E_0 := A
\subset E_1 \subset \cdots \subset E_m := E$, such that each $E_i$
has codimension $1$ in $E_{i+1}$. All flag Jacobi algebras over
$A$ can be completely described by a recursive reasoning where the
key step is settled in \thref{clasdim1}: several applications and
examples are given at the end of the section. In particular, if
$A$ is a Poisson algebra we indicate the difference between
${\mathcal J}{\mathcal H}^{2} \, (V, \, A)$ and ${\mathcal
P}{\mathcal H}^{2} \, (V, \, A)$, the latter being the classifying
object of the ES-problem for Poisson algebras.

\seref{cazurispeciale} has as starting point the following remark:
the necessay and sufficient axioms for the construction of the
unified product, applied for (not necessarily unital) Poisson
algebras and for trivial cocycles reduces precisely to the
definition of the \emph{matched pairs} of Poisson algebras
(\deref{matchedposs}) which were recently introduced by Ni and Bai
\cite[Theorem 1]{NiBai} using the equivalent language of
representations. Moreover, the associated unified product in this
case is precisely the \emph{bicrossed product} of Poisson
algebras. \prref{factprob} shows that the bicrossed product is the
construction responsible for the \emph{factorization problem} at
the level of Poisson algebras. The rest of the section is devoted
to the converse of the factorization problem -- we call it the
\emph{bicrossed descent} (or the \emph{classification of
complements}) problem \cite{a-2013, am-2013a} which for Poisson
algebras comes down to the following question:

\emph{Let $P \subset R$ be an extension of Poisson algebras. If a
$P$-complement in $R$ exists (i.e. a Poisson subalgebra $Q \subset
R$ such that $R = P + Q$ and $P \cap Q = \{0\}$), describe
explicitly, classify all $P$-complement in $R$ and compute the
cardinal of the isomorphism classes of all $P$-complements in $R$
(which will be called the factorization index $[R: P]^f$ of $P$ in
$R$).}

Let $Q$ be a given $P$-complement and $\bigl(P, \, Q, \,
\triangleleft, \, \triangleright, \, \leftharpoonup, \,
\rightharpoonup \bigl)$ the associated canonical matched pair
(\prref{factprob}). In \thref{comp} a general deformation of the
Poisson algebra $Q$ is introduced: it is associated to a
deformation map $r: Q \to P$ in the sense of \deref{defmap}.
\thref{comp} proves that in order to find all complements of $P$
in $R$ it is enough to know only one $P$-complement: all the other
$P$-complements are deformations of it. The answer to the
bicrossed descent problem is given in \thref{clasif100}: there
exists a bijection between the isomorphism classes of all
$P$-complements of $R$ and a new cohomological type object
$\mathcal{H}\mathcal{A}^{2} \bigl(P,\, Q ~|~ (\triangleleft, \,
\triangleright, \, \leftharpoonup, \, \rightharpoonup)\bigl)$
which is constructed and the formula for computing the
factorization index $[R: P]^f$ is given. Examples are also
provided: in particular, an extension $P \subseteq R$ of Poisson
algebras such that $P$ has infinitely many non-isomorphic
complements in $R$ is constructed in \prref{ultimapr}.

\section{Preliminaries}\selabel{prel}
For a family of sets $(X_i)_{i\in I}$ we shall denote by
$\sqcup_{i\in I} \, X_i$ their coproduct in the category of sets,
that is $\sqcup_{i\in I} \, X_i$ is the disjoint union of $X_i$,
for all $i\in I$. Unless otherwise specified all vector spaces,
(bi)linear maps, tensor products are over an arbitrary field $k$.
A map $f: V \to W$ between two vector spaces is called trivial if
$f (v) = 0$, for all $v\in V$. $V^* = {\rm Hom}_k (V, \, k)$ and
${\rm End}_k (V)$ denote the dual, respectively the endomorphisms
ring of a vector space $V$. Throughout we use the following
convention: except for the situation when we deal with Poisson
algebras as in \seref{cazurispeciale}, by an algebra $A = (A,
m_A)$ we will always mean an associative, commutative and unital
algebra over $k$; the unit of $A$ will be denoted by $1_A$ while
the multiplication $m_A$ is denoted by juxtaposition $m_A (a, b) =
ab$. All morphisms of algebras preserve units and any left/right
$A$-module is unitary. For an algebra $A$ we shall denote by
${}_A\Mm_A$ the category of all $A$-bimodules, i.e. triples $(V,
\, \rightharpoonup, \, \triangleleft)$ consisting of a vector
space $V$ and two bilinear maps $\rightharpoonup \, : A \times V
\to V$, $\triangleleft : V \times A \to V$ such that $(V,
\rightharpoonup)$ is a left $A$-module, $(V, \triangleleft)$ is a
right $A$-module and $a \rightharpoonup (x \triangleleft b) = (a
\rightharpoonup x) \triangleleft b$, for all $a$, $b\in A$ and
$x\in V$. Although all algebras considered in this paper are
commutative we will maintain the adjectives left/right when
defining modules in order to clearly indicate the way actions are
defined. For unexplained concepts pertaining to Lie algebra theory
we refer to \cite{H}. In particular, representations of a Lie
algebra $\mathfrak{g}$ will be viewed as right Lie
$\mathfrak{g}$-modules. Explicitly, a right Lie
$\mathfrak{g}$-module is a vector space $V$ together with a
bilinear map $ \leftharpoonup \, : V \times \mathfrak{g} \to V$
such that $x \leftharpoonup [a, \, b] = (x \leftharpoonup a)
\leftharpoonup b \,  - \, (x \leftharpoonup b) \leftharpoonup a$,
for all $a$, $b \in \mathfrak{g}$ and $x\in V$. Left Lie
$\mathfrak{g}$-modules are defined analogously and the category of
right Lie $\mathfrak{g}$-modules will be denoted by ${\mathcal
L}\Mm^{\mathfrak{g}}$.

An algebra $A$ is called a \emph{Frobenius} algebra if $A \cong
A^*$ as right $A$-modules, where $A^*$ is viewed as a right
$A$-module via $(a^* \cdot a ) (b) := a^* (ab)$, for all $a^* \in
A^*$ and $a$, $b\in A$. For the basic theory of Frobenius algebras
we refer to \cite{kadison}. The Lie algebra counterpart of
Frobenius algebras was studied under different names such as
\emph{self-dual}, \emph{metric} or Lie algebras having a
non-degenerate invariant bilinear form. In this paper we will call
them Frobenius Lie algebras: a \emph{Frobenius Lie} algebra is a
Lie algebra $\mathfrak{g}$ such that $\mathfrak{g} \cong
\mathfrak{g}^*$ as right Lie $\mathfrak{g}$-modules, where
$\mathfrak{g}$ and $\mathfrak{g}^*$ are right Lie
$\mathfrak{g}$-modules via the canonical actions: $b
\leftharpoonup a := [b, \, a]$ and $\bigl( a^* \curvearrowleft a
\bigl) (b) := a^* \bigl([a, \, b] \bigl)$, for all $a$, $b\in
\mathfrak{g}$ and $a^* \in \mathfrak{g}^*$. We can easily see that
a Lie algebra $\mathfrak{g}$ is Frobenius if and only if there
exists a non-degenerate invariant bilinear form $B: \mathfrak{g}
\times \mathfrak{g} \to k$, i.e. $B([a, \, b], \, c) = B(a, \, [b,
\, c])$, for all $a$, $b$, $c\in \mathfrak{g}$. In light of this
reformulation, the second Cartan's criterion shows that any finite
dimensional complex semisimple Lie algebra is Frobenius since its
Killing form is non-degenerate and invariant. Let $\mathfrak{h}
(2n + 1, k)$ be the $(2n+1)$-dimensional Heisenberg algebra: it
has a basis $\{x_1, \cdots, x_n, y_1, \cdots, y_n, z \}$ and the
only non-zero Lie brackets are $[x_i, y_i] := z$, for all $i = 1,
\cdots, n$. Then $\mathfrak{h} (2n + 1, k)$ is not Frobenius: if
$B: \mathfrak{h} (2n + 1, k) \times \mathfrak{h} (2n + 1, k) \to
k$ is an invariant bilinear form then we can see that $B(z, \, -)
= 0$, that is $B$ is degenerate. Besides the mathematical interest
in studying Frobenius Lie algebras \cite{kat, medina}, they are
also important and have been intensively studied in physics
\cite{fig, pelc} - in particular for the construction of
Wess-Zumino-Novikov-Witten models.

A \emph{Poisson} algebra is a triple $A = (A, \, m_A, \, [-, \,
-])$, where $(A, m_A)$ is a (not necessarily unital) commutative
algebra, $(A, \, [-, \, -])$ is a Lie algebra such that the
Leibniz law
$$
[ab, \, c ] = a \, [b, \, c] + [a, \, c] \, b
$$
holds for any $a$, $b$, $c\in A$. For further details concerning
the study of Poisson algebras arising from differential geometry
see \cite{LPV} and the references therein. If a Poisson algebra
$A$ has a unit $1_A$, then by taking $a = b = 1_A$ in the Leibniz
law we obtain that $[1_A, \, c] = [c, \, 1_A] = 0$, for all $c\in
A$. Any non unital Poisson algebra embeds into a unital Poisson
algebra. If $A$ is a unital Poisson algebra, then using that $[a,
\, 1_A ] = 0$ and the Jacobi identity, we can easily prove that
the map
\begin{equation*}
R: A\ot A \to A\ot A, \qquad R (a \ot b) := b\ot a + 1_A \ot [a,
\, b]
\end{equation*}
for all $a$, $b\in A$ is a solution of the quantum Yang-Baxter
equation $R^{12} R^{23} R^{12} = R^{23} R^{12} R^{23}$ in ${\rm
End}_k (A\ot A \ot A)$. A (right) \emph{Poisson $A$-module}
\cite{LWZ, ZVZ} is a vector space $V$ equipped with two bilinear
maps $\triangleleft : V \times A \to V$ and $\leftharpoonup : V
\times A \to V$ such that $(V, \, \triangleleft)$ is a right
$A$-module, $(V, \, \leftharpoonup)$ is a right Lie $A$-module
satisfying the following two compatibility conditions for any $a$,
$b\in A$ and $x\in V$:
\begin{eqnarray}
x \leftharpoonup (ab) &=& (x \leftharpoonup a) \triangleleft b +
(x \leftharpoonup b) \triangleleft a, \quad x  \triangleleft [a,
\, b] = (x \triangleleft a) \leftharpoonup b - (x \leftharpoonup
b) \triangleleft a \eqlabel{Pmod1}
\end{eqnarray}
We denote by ${\mathcal P} \Mm^A_A$ the category of right Poisson
$A$-modules having as morphisms all linear maps which are
compatible with both actions.

\subsection*{Unified products for associative/Lie algebras.}
We recall some concepts and constructions from \cite{am-2013,
am-2013d} that will be used from \seref{unifiedprod} on.

\begin{definition}\delabel{comexdatum}
Let $A$ be an algebra and $V$ a vector space. An algebra
\textit{extending system of $A$ through $V$} is a system
$\Omega(A, \, V) = \bigl(\triangleleft, \, \triangleright, \, f,
\, \cdot \bigl)$ consisting of four bilinear maps $\triangleleft :
V \times A \to V$, \,  $\triangleright : V \times A \to A$, \, $f:
V\times V \to A$, \, $\cdot \, : V\times V \to V$ satisfying the
following six compatibility conditions for any $a$, $b \in A$,
$x$, $y$, $z \in V$:
\begin{enumerate}
\item[(A1)] $f$ and $\cdot$ are symmetric, $(V, \triangleleft)$ is
a right $A$-module and $x \triangleright 1_A = 0$

\item[(A2)] $x \cdot ( y \cdot z) - (x \cdot y) \cdot z = z
\triangleleft f (x, \, y) \, - \,  x \triangleleft f (y, \, z)$

\item[(A3)] $(x \cdot y) \triangleleft a = x \triangleleft (y
\triangleright a) + x \cdot (y \triangleleft a) $

\item[(A4)] $ x \triangleright (ab) = a (x \triangleright b) + (x
\triangleleft b) \triangleright a$

\item[(A5)] $ (x \cdot y) \triangleright a = x \triangleright (y
\triangleright a) + f(x, \, y \triangleleft a) - f (x, \, y) a$

\item[(A6)] $f(x, \, y\cdot z) - f(x\cdot y, \, z) = z
\triangleright f (x, \, y) - x \triangleright f(y, \, z)$
\end{enumerate}
\end{definition}

Let $\Omega(A, \, V) = \bigl(\triangleleft, \, \triangleright, \, f,
\, \cdot \bigl)$ be an extending system of $A$ through $V$ and $A
\, \ltimes_{\Omega(A, V)} V := A \, \times V $ with the
multiplication $\bullet$ defined for any $a$, $b \in A$ and $x$,
$y \in V$ by:
\begin{equation}\eqlabel{produnifcom}
(a, \, x) \bullet (b, \, y) := \bigl( ab + x \triangleright b + y
\triangleright a + f(x, y), \,\, x\triangleleft b +  y
\triangleleft a  + x \cdot y \bigl)
\end{equation}

Then $A \, \ltimes_{\Omega(A, V)} V = (A \, \ltimes_{\Omega(A, V)}
V, \, \bullet)$ is a commutative algebra having $(1_A, \, 0_V)$ as
a unit, called the \emph{unified product} of $A$ and $\Omega(A, \,
V)$. In fact, there is more to be said: $(A \, \ltimes_{\Omega(A,
V)} V, \, \bullet)$ is a commutative algebra with the unit $(1_A,
\, 0_V)$ if and only if $\Omega(A, \, V)$ is an extending system
of $A$ through $V$ -- this is the commutative version of
\cite[Theorem 2.2]{am-2013d}. Any unified product $A \,
\ltimes_{\Omega(A, V)} V$ contains $A \cong A \times \{0\}$ as a
subalgebra. Conversely, let $A$ be an algebra and $E$ a vector
space containing $A$ as a subspace. Then, any algebra structure
$\cdot$ on $E$ containing $A$ as a subalgebra is isomorphic to a
unified product, that is $(E, \cdot) \cong A \, \ltimes_{\Omega(A,
V)} V$, for some extending system of $A$ through $V$ -- this is
\cite[Theorem 2.4]{am-2013d} applied for the special case of
commutative algebras.

\begin{example} \exlabel{flagalg}
Let $A$ be an algebra. Then there is a bijection between the set
of all algebra extending systems of $A$ through $k$ and the set of
all $4$-tuples $(\Lambda, \, \Delta, \, f_0, \, u) \in A^* \times
{\rm End}_k (A) \times A \times k$ satisfying the following
compatibilities for any $a$, $b\in A$:
\begin{enumerate}
\item[(FA1)] $\Lambda : A \to k$ is an algebra map and $\Lambda
\circ \Delta = 0$

\item[(FA2)] $\Delta (a b) = a \, \Delta (b) + \Lambda(b) \,
\Delta (a)$

\item[(FA3)] $\Delta^2 (a) = u \, \Delta (a) + f_0 \, a - \Lambda (a) \, f_0 $
\end{enumerate}
The bijection is given such that the algebra extending system
$\Omega(A, \, k) = \bigl(\triangleleft, \, \triangleright, \, f, \,
\cdot \bigl)$ associated to $(\Lambda, \, \Delta, \, f_0, \, u)$
is defined for any $x$, $y\in k$ and $a\in A$ by:
\begin{equation}\eqlabel{bijflag}
x \triangleleft a := x\, \Lambda (a), \quad x \triangleright a :=
x \, \Delta (a), \quad f(x, \, y) := xy \, f_0, \quad x\cdot y :=
xy u
\end{equation}
A $4$-tuple $(\Lambda, \, \Delta, \, f_0, \, u)$ satisfying
(FA1)-(FA3) is called a \emph{flag datum} of $A$ and we denote by
${\mathcal F} (A)$ the set of all flag datums of $A$. The unified
product $A \, \ltimes_{\Omega(A, k)} \, k$ associated to a flag
datum $(\Lambda, \, \Delta, \, f_0, \, u)$ will be denoted by
$A_{(\Lambda, \, \Delta, \, f_0, \, u)}$ and coincides with the
vector space $A \times k$ having the multiplication given for any
$a$, $b \in A$, $x$, $y \in k$ by:
$$
(a, \, x) \bullet (b, \, y) := \bigl( ab + x \Delta(b) + y \Delta
(a) + xy \, f_0, \,\, x\Lambda (b) +  y \Lambda (a)  + x y u
\bigl)
$$
An algebra $B$ contains $A$ as a subalgebra of codimension $1$ if
and only if $B \cong A_{(\Lambda, \, \Delta, \, f_0, \, u)}$, for
some flag datum $(\Lambda, \, \Delta, \, f_0, \, u) \in {\mathcal
F} (A)$ \cite[Section 4]{am-2013d}.
\end{example}

The Lie algebra counterpart of the extending structures were
introduced in \cite{am-2013} as follows:

\begin{definition}\delabel{exdatum}
Let $A = (A, \, [-, \, -])$ be a Lie algebra and $V$ a vector
space. A Lie \textit{extending system of $A$ through $V$} is a
system $\Lambda (A, \, V) = \bigl(\leftharpoonup, \,
\rightharpoonup, \, \theta, \{-, \, -\} \bigl)$ consisting of four
bilinear maps $\leftharpoonup: V \times A \to V$, \,
$\rightharpoonup : V \times A \to A$, \, $\theta: V\times V \to
A$, \, $\{-, \, -\} : V\times V \to V$ satisfying the following
compatibility conditions for any $a$, $b \in A$, $x$, $y$, $z \in
V$:
\begin{enumerate}
\item[(L1)] $(V, \, \leftharpoonup)$ is a right Lie $A$-module, $
\theta (x, \, x) = 0$ and $\{x, \, x \} = 0$

\item[(L2)] $x \rightharpoonup [a, \, b] = [x \rightharpoonup a,
\, b] + [a, \, x \rightharpoonup b] + (x \leftharpoonup a)
\rightharpoonup b - (x \leftharpoonup b) \rightharpoonup a$

\item[(L3)] $\{x, \, y \} \leftharpoonup a = \{x, \, y
\leftharpoonup a \} + \{x \leftharpoonup a, \, y \} + x
\leftharpoonup (y \rightharpoonup a) - y \leftharpoonup (x
\rightharpoonup a)$

\item[(L4)] $\{x,\, y \} \rightharpoonup a = x \rightharpoonup (y
\rightharpoonup a) - y \rightharpoonup (x \rightharpoonup a) + [a,
\, \theta (x,\, y)] + \theta (x, y \leftharpoonup a) + \theta (x
\leftharpoonup a, y)$

\item[(L5)] $\sum_{(c)} \theta \bigl(x, \, \{y, \, z \}\bigl) \, +
\,  \sum_{(c)} x \rightharpoonup \theta (y, z) = 0$

\item[(L6)] $\sum_{(c)} \{x, \, \{y, \, z\}\} \, + \,  \sum_{(c)}
x \leftharpoonup \theta (y, z) = 0$
\end{enumerate}
where $\sum_{(c)}$ denotes the circular sum.
\end{definition}

The concept of extending system of a Lie algebra $A$ through a
vector space $V$ generalizes the concept of a matched pair of Lie
algebras as defined in \cite{LW, majid}: if $\theta$ is the
trivial map, then $\Lambda (A, \, V) = \bigl(\leftharpoonup, \,
\rightharpoonup, \, \theta := 0, \, \{-, \, -\} \bigl)$ is a Lie
extending system of $A$ through $V$ if and only if $(V, \, \{-, \,
-\})$ is a Lie algebra and $(A, V, \leftharpoonup, \,
\rightharpoonup)$ is a matched pair of Lie algebras. Let $\Lambda
(A, \, V) = \bigl(\leftharpoonup, \, \rightharpoonup, \, \theta,
\{-, \, -\} \bigl)$ be an extending system of a Lie algebra $A$
through a vector space $V$ and let $A \, \ltimes_{\Lambda (A, V)}
V $ be the vector space $A \, \times V$ with the bracket $[ -, \,
-]$ defined for any $a$, $b \in A$ and $x$, $y \in V$ by:
\begin{equation}\eqlabel{brackunif}
[(a, x), \, (b, y)] := \bigl( [a, \, b] + x \rightharpoonup b - y
\rightharpoonup a + \theta (x, y), \,\, \{x, \, y \} +
x\leftharpoonup b - y \leftharpoonup a \bigl)
\end{equation}
Then $A \, \ltimes_{\Lambda (A, V)} V $ is a Lie algebra called
the \textit{unified product} of $A$ and $\Lambda (A, \, V)$.
Moreover, \cite[Theorem 2.2]{am-2013} proves that $(A \,
\ltimes_{\Lambda (A, V)} V, \, [ -, \, -])$ with the bracket given
by \equref{brackunif} is a Lie algebra if and only if $\Lambda (A,
\, V)$ is a Lie extending system of $A$ through $V$. The Lie
algebra $A \, \ltimes_{\Lambda (A, V)} V $ contains $A \cong A
\times \{0\}$ as a Lie subalgebra. Conversely, let $A$ be a Lie
algebra, $E$ a vector space such that $A$ is a subspace of $E$.
Then, any Lie algebra structure $[-, \, -]$ on $E$
containing $A$ as a Lie subalgebra is isomorphic to a unified
product: i.e., $(E, [-, \, -]) \cong A \, \ltimes_{\Lambda (A, V)}
V$, for some $\Lambda (A, \, V) = \bigl(\leftharpoonup, \,
\rightharpoonup, \, \theta, \{-, \, -\} \bigl)$, an extending
system of $A$ through $V$ (\cite[Theorem 2.4]{am-2013}).

\begin{example} \exlabel{flagLie}
Let $A = (A, [-, \, -])$ be a Lie algebra. \cite[Proposition
4.4]{am-2013} proves that there is a bijection between the set of
all Lie extending system of $A$ through $k$ and the set ${\rm
TwDer} (A)$ of all \emph{twisted derivations} of $A$, which is the
set of all pairs $(\lambda, \, D) \in A^* \times {\rm End}_k (A)$
satisfying the following compatibilities for any $a$, $b\in A$:
\begin{enumerate}
\item[(FL1)] $\lambda ([a, \, b]) = 0$

\item[(FL2)] $D ([a, \, b]) = [D(a), \, b] + [a, \, D(b)] +
\lambda(a) D (b) - \lambda(b) D(a)$
\end{enumerate}
The bijection is given such that the Lie extending system $\Lambda
(A, \, k) = \bigl(\leftharpoonup, \, \rightharpoonup, \, \theta,
\{-, \, -\} \bigl)$ associated to a twisted derivation $(\lambda,
\, D)$ is defined for any $x$, $y\in k$ and $a\in A$ by:
\begin{equation}\eqlabel{bijflaglie}
x \leftharpoonup a := x\, \lambda (a), \quad x \rightharpoonup a
:= x \, D (a), \quad \theta(x, \, y) := 0, \quad \{x, \, y \} := 0
\end{equation}
${\rm TwDer} (A)$ contains the usual space of derivations ${\rm
Der} (A)$ via the canonical embedding ${\rm Der} (A)
\hookrightarrow {\rm TwDer} (A), \, D \mapsto (0, D) $. We point
out that the above canonical embedding is bijective if $A$ is a
perfect Lie algebra. The unified product $A \, \ltimes_{\Lambda
(A, k)} k $ associated to $(\lambda, \, D) \in {\rm TwDer} (A)$ is
denoted by $A_{(\lambda, \, D)}$ and it is the vector space $A
\times k$ with the bracket defined for any $a$, $b \in A$ and $x$,
$y \in k$ by:
$$
\left[(a, x), \, (b, y)\right] := \bigl( \left[a, \, b\right] + x
D (b) - y D(a), \,\, x\lambda (b) - y \lambda (a) \bigl)
$$
A Lie algebra $\mathfrak{g}$ contains $A$ as a Lie subalgebra of
codimension $1$ if and only if $\mathfrak{g} \cong A_{(\lambda, \,
D)}$, for some $(\lambda, \, D) \in {\rm TwDer} (A)$ \cite[Section
4]{am-2013}.
\end{example}

\section{Basic concepts, (bi)modules, integrals and Frobenius Jacobi algebras} \selabel{frob}
We recall the definition of Jacobi algebras \cite{grab92, marle}
as the abstract algebraic counterpart of Jacobi manifolds:

\begin{definition}\delabel{Jacalg}
A \emph{Jacobi algebra} is a quadruple $A = (A, \, m_A, \, 1_{A},
\, [-, \, -])$, where $(A, m_A, 1_{A})$ is a unital algebra, $(A,
\, [-, \, -])$ is a Lie algebra such that for any $a$, $b$, $c\in
A$:
\begin{equation}\eqlabel{jac1}
[ab, \, c ] =  a \, [b, \, c] + [a, \, c] \, b  - ab \, [1_A, \,
c]
\end{equation}
\end{definition}

Any unital Poisson algebra is a Jacobi algebra. Any algebra $A$ is
a Jacobi algebra with the trivial bracket $[a, \, b] = 0$, for all
$a$, $b\in A$ -- such an Jacobi algebra will be called
\emph{abelian} and will be denoted by $A_0$. A morphism between
two Jacobi (resp. Poisson) algebras $A$ and $B$ is a linear map
$\varphi: A \to B$ which is both a morphism of algebras as well as
a morphism of Lie algebras. We denote by ${}_ k{\rm Jac}$ (resp.
${}_ k{\rm Poss}$) the category of Jacobi (resp. unitary Poisson)
algebras over a field $k$. A \emph{Jacobi ideal} of a Jacobi
algebra $A$ is a linear subspace $I$ which is both an ideal with
respect to the associative product as well as a Lie ideal of $A$.
If $I$ is a Jacobi ideal of $A$ then $A/I$ inherits a Jacobi
algebra structure in the obvious way.

\begin{remarks}\relabel{reflective}
$(1)$ The category ${}_ k{\rm Poss}$ of unital Poisson algebras is
a reflective subcategory in the category of Jacobi algebras, i.e.
the inclusion functor $\iota : {}_ k{\rm Poss} \hookrightarrow {}_
k{\rm Jac}$ has a left adjoint which we will denote by $(-)_{\rm
poss} : {}_ k{\rm Jac} \to {}_ k{\rm Poss}$ and whose construction
goes as follows: for any Jacobi algebra $A$ we define $A_{\rm
poss} := A/ I_{\rm poss}$, where $I_{\rm poss}$ is the Jacobi
ideal of $A$ generated by all brackets $[1_A, \, a]$, for all
$a\in A$. Then, $A_{\rm poss}$ is a Poisson algebra and the
quotient map $A \to A_{\rm poss}$, $a \mapsto \widehat{a}$ is
universal among the maps from $A$ to any unital Poisson algebra
which are morphisms of Jacobi algebras. We mention that it is
possible that $A_{\rm poss} = 0$ - this happens when $I_{\rm
poss}$ contains an invertible element of $A$. An example is the
Jacobi algebra $J^{2, \, 2}$ defined in \exref{dim2} below.

$(2)$ The category ${}_ k{\rm Jac} = \bigl({}_ k{\rm Jac}, \,  -
\ot -, \, k, \, \tau_{-,-} \bigl)$ is braided monoidal: if $A$ and
$B$ are Jacobi algebras, then $A\ot B$ is a Jacobi algebra via
$$
(a \ot b) \cdot (a' \ot b') := aa' \ot bb', \quad [a\ot b, \, a' \ot b'] := aa' \ot [b, \, b'] + [a, \, a'] \ot bb'
$$
for all $a$, $a'\in A$ and $b$, $b'\in B$, the base field $k$ viewed with the
abelian Lie bracket is the unit and the braiding is the usual flip $\tau_{A, \, B} : A\ot B \to B \ot A$,
$\tau_{A, \, B} (a \ot b) := b\ot a$.
\end{remarks}

The classification of Jacobi algebras of a given finite dimension
is the first non-trivial question which arises as the algebraic
counterpart of the classification of finite dimensional Jacobi
manifolds. Of course, any $1$-dimensional Jacobi algebra is
isomorphic to the abelian Jacobi algebra $k_0$. The
$2$-dimensional case is covered below and it reveals an
interesting fact namely that two Jacobi algebras can be isomorphic
both as algebras and Lie algebras (through different isomorphisms)
but not isomorphic as Jacobi algebras. A similar result holds for
Hopf algebras as well.

\begin{example} \exlabel{dim2}
Let $k$ be a field of characteristic $\neq 2$. If $k = k^2 := \{
q^2 \, | \, q\in k \}$ then, up to an isomorphism, there exist
four $2$-dimensional Jacobi algebras over $k$. These are the
Jacobi algebras denoted by $J^{2, \, 1} $, $J^{2, \, 2}$, $J^{2,
\, 3}$, $J^{2, \, 4}$ having $\{1, \, x\}$ as a basis and the
multiplication and the bracket defined by:
$$
J^{2, \, 1}: \,\, x^2 = 0, \,\,\, [1, \, x] = 0; \qquad J^{2, \,
2}: \,\, x^2 = 0, \,\,\, [1, \, x] = 1
$$
$$
J^{2, \, 3}: \,\, x^2 = x, \,\,\, [1, \, x] = 0; \qquad J^{2, \,
4}: \,\, x^2 = 0, \,\,\, [x, \, 1] = x
$$
If $k \neq k^2$, besides the four Jacobi algebras listed above
there exists another one-parameter $2$-dimensional Jacobi algebra
$J^2_d$ defined by:
$$J^2_d:  \qquad  x^2 = d, \qquad [1, \, x] = 0 $$
for all $d \in S$, where $S \subseteq k\setminus k^2$ a system of
representatives for the following equivalence relation on $k
\setminus k^2$: $d \equiv d'$ if and only if there exists $q \in
k^{*}$ such that $d = q^{2} d'$.

Indeed, we will fix $\{1, \, x\}$ as a basis in a two dimensional
Jacobi algebra. The proof follows from the classical
classification of $2$-dimensional Lie algebras \cite{lie} and from
the well known classification of $2$-dimensional associative
algebras \cite{study} (for arbitrary fields see \cite[Corollary
4.5]{am-2013d}). Indeed, the classification follows by a routine
computation based on checking the compatibility condition
\equref{jac1}. We only mention that the algebra defined by the
multiplication $x^2 = x$ (or $x^2 = d$, for some $d \in S$, if $k
\neq k^2$) together with the Lie bracket $[1, \, x] = 1$ or $[1,
\, x] = x$ is not a Jacobi algebra since the compatibility
condition \equref{jac1} fails for $a = b = c := x$. We observe
that $J^{2, \, 2}$ and $J^{2, \, 4}$ are Jacobi non-Poisson
algebras, $(J^{2, \, 2})_{\rm poss} = 0$ and $(J^{2, \, 4})_{\rm
poss} \cong k_0$. Moreover, the Jacobi algebras $J^{2, \, 2}$ and
$J^{2, \, 4}$ are isomorphic as associative algebras as well as
Lie algebras but they are not isomorphic as Jacobi algebras.

In particular, if $k = \CC$ there are four isomorphism classes of
$2$-dimensional Jacobi algebras, while if $k = \RR$ there exist
five types of $2$-dimensional Jacobi algebras, the fifth one being
the Jacobi algebra $J^2_{-1}$.
\end{example}

\begin{example} \exlabel{dim3}
Using the classical classification of $3$-dimensional associative
(resp. Lie) algebras from \cite{study} (resp. \cite{Jacob, lie})
over the complex field $\CC$ and the same strategy as in
\exref{dim2} we can prove that, up to an isomorphism, there are
exactly $11$ families of complex Jacobi algebras of dimension $3$:
they have $\{1, \, x, \, y\}$ as a basis and the multiplication
and the bracket as listed in Table \ref{10}.
\begin{center}
\begin{table}
\begin{tabular}{ | l | c | c | }
\hline
Jacobi algebra & Algebra structure & Lie bracket\\
\hline

$J^{3}_{1}$ & $x^2 = y^{2} = xy = yx = 0$ & abelian \\\hline

$J^{3}_{2}$ & $x^2 = y$,\, $y^{2} = 0$,\, $xy = yx = 0$ & abelian
\\\hline

$J^{3}_{3}$ & $x^2 = x$, \,$y^2 = 0$,\, $xy = yx = 0$  & abelian
\\\hline

$J^{3}_{4}$ &  $x^2 = x$, \,$y^2 = y$, \,$xy = yx = 0$ & abelian \\
\hline

$J^{3}_{5}$ & $x^2 = y^{2} = xy = yx = 0$ & [x,\,1] = x \\\hline

$J^{3}_{6}$ & $x^2 = y^{2} = xy = yx = 0$ & [x,\,y] = x\\
\hline

$J^{3}_{7}$ & $x^2 = y^{2} = xy = yx = 0$ & [x,\,1] = x+y, \, [y,\, 1] = y\\
\hline

${}^{u}J^{3}_{8}$,\, $u \in k^{*}$ & $x^2 = y^{2} = xy = yx = 0$ & [x,\,1] = x, \, [y,\, 1] = u\,y\\
\hline

$J^{3}_{9}$ &  $x^2 = y$, \,$y^2 = 0$, \,$xy = yx = 0$ &  [x,\,1] = x, \, [y,\, 1] = 2\,y\\
\hline

$J^{3}_{10}$ &  $x^2 = y$, \,$y^2 = 0$, \,$xy = yx = 0$ &  [x,\,1] = $2^{-1}$\,x, \, [y,\, 1] = y\\
\hline

$J^{3}_{11}$ &  $x^2 = x$, \,$y^2 = 0$, \,$xy = yx = 0$ &  [y,\, 1] = y\\
\hline
\end{tabular}
\caption{Jacobi algebras of dimension $3$ over $\CC$.} \label{10}
\end{table}
\end{center}
\end{example}

Subsequent to the problem of classifying Jacobi algebras of a
given dimension is the following question: \emph{for a given
algebra (resp. Lie algebra) $A$, describe and classify all Lie
brackets $[-, \, -]$ (resp. all possible multiplications $m_A$)
which endow $A$ with a Jacobi algebra structure}. Some examples
are given below:

\begin{examples} \exlabel{ciclic}
$(1)$ Let $C_{n}$ be a cyclic group of order $n \geq 2$ generated by
$c$. Then the group algebra $k[C_{n}]$ can be made into a Jacobi
algebra as follows:

$(1)$ If ${\rm char} \, (k) = 0$  or ${\rm char} \, (k) = p > 0$
and $(p, \, n-1) = 1$, then the only Lie bracket which makes
$k[C_{n}]$ into a Jacobi algebra is the trivial one, i.e.
$k[C_{n}] = k[C_{n}]_0$;

$(2)$ If ${\rm char} \, (k) = p \, | \, n-1 $, then any $y \in
k[C_{n}]$ induces a unique Lie bracket given by $[c^{i}, \, c^{j}]
= (j - i) \, c^{j + i - 1} \, y$, for all $i$, $j = 1, 2, ..., n$
which makes $k[C_{n}]$ into a Jacobi algebra.

Indeed, let $[-, \, -]$ be a Lie bracket that endows the group
algebra $k[C_{n}]$ with a Jacobi algebra structure and we denote
$y := [1, c]$. By using \equref{jac1} and the induction we obtain
that $[c^{i}, \, c^{j}] = (j - i) \, c^{j + i - 1} \, y$, for all
$i$, $j = 1, 2, ..., n$. Therefore, since $[c^{n}, \, c] = y$ we
obtain $(n-1) \,y = 0$, which brings us to the two cases
considered above. If ${\rm char} \, (k) = p$ and $p | n-1$, then
it can be easily seen that Jacobi's identity is also fulfilled.

$(2)$ Let $C_{\infty}$ be the infinite cyclic group generated by
$c$. Then, for any $y \in k[C_{\infty}]$, the group algebra
$k[C_{\infty}]$ admits a Jacobi algebra structure with the bracket
given by $[c^{i}, \, c^{j}] = (j - i) \, c^{j + i - 1} \, y$, for
all $i$, $j \in \ZZ$.

$(3)$ Let ${\rm sl} \, (2, \, \CC)$ be the complex special linear
algebra of dimension $3$. Since ${\rm sl} \, (2, \, \CC)$ is
perfect as a Lie algebra, a careful analysis of the Lie brackets
on $3$-dimensional Jacobi algebras given in table \ref{10} brings
us to the conclusion that the Lie algebra ${\rm sl} \, (2, \,
\CC)$ can not be endowed with an algebra structure to make it into
a Jacobi agebra.
\end{examples}

The following construction is the algebraic counterpart of
conformal deformations of Jacobi manifolds \cite{marle}.
Mutatis-mutandis it can be seen as the Jacobi version of the
Drinfel'd twist deformation for quantum groups \cite{Dri}.

\begin{proposition}\prlabel{conformal}
Let $A = (A, \, m_A, \, [-, \, -])$ be a Jacobi algebra and $u \in
U (A)$ an invertible element of $A$. Then $A_u := (A, \, m_A, \,
[-, \, -]_u)$ is a Jacobi algebra called the \emph{$u$-conformal
deformation} of $A$, where the bracket $[-, \, -]_u$ is given for
any $x$, $y\in A$ by:
\begin{equation} \eqlabel{brackconf}
\left [x, \, y \right]_u := u^{-1} \, \left[u \,x, \,\, u\,y
\right]
\end{equation}
\end{proposition}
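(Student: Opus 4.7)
The plan is to check the three axioms of a Jacobi algebra for $A_u=(A,m_A,[-,-]_u)$ in turn, exploiting the commutativity of $(A,m_A)$ to move factors of $u^{\pm 1}$ freely. Bilinearity of $[-,-]_u$ is immediate from the bilinearity of $[-,-]$, and antisymmetry follows from $[ux,uy]=-[uy,ux]$ after applying $u^{-1}$.

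For the Jacobi identity, the key observation is that since $A$ is commutative, $u\,[x,y]_u=[ux,uy]$, and therefore
\[
[[x,y]_u,z]_u \;=\; u^{-1}\bigl[u\,[x,y]_u,\;uz\bigr]\;=\;u^{-1}\bigl[[ux,uy],\,uz\bigr].
\]
Summing cyclically over $(x,y,z)$ and factoring out $u^{-1}$ reduces the identity to the original Jacobi identity of $A$ evaluated at $(ux,uy,uz)$, which vanishes.

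The main work is verifying the compatibility \equref{jac1}, namely
\[
[ab,c]_u \;=\; a\,[b,c]_u+[a,c]_u\,b-ab\,[1_A,c]_u.
\]
First, I record the useful identity $[1_A,c]_u=u^{-1}[u,uc]$. Then I expand the left-hand side by applying \equref{jac1} in $A$ twice: once to $[u(ab),uc]$ viewed as $[u\cdot(ab),uc]$, and once to the resulting $[ab,uc]$ viewed as $[a\cdot b,uc]$, obtaining
\[
[ab,c]_u \;=\; a[b,uc]+[a,uc]\,b-ab\,[1_A,uc]+u^{-1}[u,uc]\,ab-ab\,[1_A,uc].
\]
On the right-hand side I expand $a[b,c]_u=a u^{-1}[ub,uc]$ and $[a,c]_u\,b=u^{-1}[ua,uc]\,b$ in the same way, applying \equref{jac1} to $[ub,uc]$ and $[ua,uc]$ and using commutativity to pull $a$, $b$ through $u^{-1}$; combined with the term $ab\,[1_A,c]_u=u^{-1}[u,uc]\,ab$, the two sides are seen to agree term by term.

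The only real obstacle is the bookkeeping for \equref{jac1}: unlike the Poisson case, the identity has the extra summand $-ab\,[1_A,c]$, and the unit-like terms $[u,uc]$ that appear when the Leibniz-type rule is applied to products involving $u$ must be matched precisely against $ab\,[1_A,c]_u$. Once the identity $[1_A,c]_u=u^{-1}[u,uc]$ is in hand and one is careful to expand $u(ab)$ as a product in two different ways, the cancellations are forced and the verification is routine.
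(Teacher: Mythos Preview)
Your proof is correct and follows essentially the same strategy as the paper: reduce the Jacobi identity for $[-,-]_u$ to the Jacobi identity for $[-,-]$ evaluated at $(ux,uy,uz)$, and verify the compatibility \equref{jac1} by repeated application of \equref{jac1} in $A$. The only cosmetic difference is that the paper first multiplies the compatibility condition through by $u$, rewriting it as $[uxy,uz]=[ux,uz]\,y+x\,[uy,uz]-xy\,[u,uz]$, which clears the $u^{-1}$ factors and makes the bookkeeping slightly lighter; your version carries $u^{-1}$ throughout but reaches the same cancellations.
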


\begin{proof}
The bilinear map $[-, \, -]_u$ is a Lie bracket on $A$ since for
any $x$, $y$, $z \in A$ we have:
\begin{eqnarray*}
&&\bigl[x,\,[y,\,z]_{u}\bigl]_{u} +
\bigl[y,\,[z,\,x]_{u}\bigl]_{u}
+ \bigl[z,\,[x,\,y]_{u}\bigl]_{u} \, =\\
&=& u^{-1} \bigl[ux, \, [uy, \, uz]\bigl] + u^{-1} \bigl[uy, \,
[uz, \, ux]\bigl] + u^{-1} \bigl[uz, \, [ux, \, uy]\bigl] = 0
\end{eqnarray*}
Now, the compatibility condition \equref{jac1} is equivalent to $
[uxy, \, uz] = [ux, \, uz] \, y + x \, [uy,\, uz] - xy \, [u, \,
uz]$, for all  $x$, $y$, $z \in A$. The right hand side gives:
\begin{eqnarray*}
&&[ux, \, uz] \, y + x \, \underline{[uy, \, uz]} - xy \, [u, \, uz]=\\
&\stackrel{\equref{jac1}} {=}& [ux, \, uz] \, y + x \, \bigl(u
\,[y, \, uz] + [u, \, uz] \, y - uy \, [1_{A},\, uz]\bigl)
- xy \, [u, \, uz]\\
&=& [ux, \, uz] \, y + u\,x\,[y, \, uz] - u \, x\, y [1_{A},\, uz]
\stackrel{\equref{jac1}} {=} [uxy, \, uz]
\end{eqnarray*}
as needed, where the last equality follows by applying
\equref{jac1} for the bracket $[-,\,-]$ in $ux$, $y$ and
respectively $uz$.
\end{proof}

Describing and classifying all $u$-conformal deformations of a
given Jacobi algebra $A$ is an interesting question that will be
addressed somewhere else. In what follows we provide an example
which shows that a $u$-conformal deformation of a Jacobi algebra
can be a Poisson algebra.

\begin{example}\exlabel{conforexp}
Let $A$ be the $3$-dimensional Jacobi non-Poisson algebra over of
field of characteristic $\neq 2$ having $\{1, \, x, \, y \}$ as a
basis and the multiplication and the non-trivial brackets given by
$x^2 := 0$, \, $xy = yx := x$, \, $y^2 := 2 y -1$, \, $[1, \, y] =
[x, \, y] := x$ ($A$ is the Jacobi algebra denoted by $J^{3, \,
5}_{2, \, 0, \, -1, -1}$ in \prref{J21} below). The group $U(A)$
of units of $A$ coincides with the set of elements of the form $u
= \alpha + \beta\, x + \gamma \, y$, with $\alpha + \gamma \neq 0$
and the space of all $u$-conformal deformations of $A$ is in
bijection with the three-parameter Jacobi algebras having the
bracket defined by:
$$
[1, \, x]_u = -\gamma \, x, \,\,\,\,\,  [1, \, y]_u = (\alpha +
\beta + \gamma) \, x, \,\,\,\,\, [x, \, y]_u = (\alpha + 2 \gamma)
\, x
$$
for all $\alpha$, $\beta$, $\gamma \in k$ such that $\alpha +
\gamma \neq 0$. In particular, the $u$-conformal deformation of
$A$ corresponding to $u : = \alpha \, (1 - x)$, for any $\alpha
\in k^*$ is a Poisson algebra.
\end{example}

\subsection*{(Bi)Modules, integrals and Frobenius Jacobi algebras}
We shall introduce the representations of a Jacobi algebra using
the equivalent notion of modules.

\begin{definition}\delabel{moduleJac}
Let $A$ be a Jacobi algebra. A right \emph{Jacobi $A$-module} is a
vector space $V$ equipped with two bilinear maps $\triangleleft :
V \times A \to V$ and $\leftharpoonup : V \times A \to V$ such
that $(V, \, \triangleleft)$ is a right $A$-module, $(V, \,
\leftharpoonup)$ is a right Lie $A$-module satisfying the
following two compatibility conditions for any $a$, $b\in A$ and
$x\in V$:
\begin{eqnarray}
x \leftharpoonup (ab) &=& (x \leftharpoonup a) \triangleleft b +
(x \leftharpoonup b) \triangleleft a - (x \leftharpoonup 1_A) \triangleleft
(ab) \eqlabel{Jmod1}\\
x  \triangleleft [a, \, b] &=& (x \triangleleft a) \leftharpoonup
b - (x  \leftharpoonup b) \triangleleft a + (x \triangleleft a)
\triangleleft [1_A, \, b] \eqlabel{Jmod2}
\end{eqnarray}
A right \emph{Jacobi $A$-bimodule} is a right Jacobi $A$-module
which in addition satisfies the following compatibility condition
for any $a$, $b\in A$ and $x\in V$:
\begin{eqnarray}
x \leftharpoonup (ab) &=& (x \triangleleft a) \leftharpoonup b +
(x \triangleleft b) \leftharpoonup a \, - (x \triangleleft (ab)) \leftharpoonup
1_A \eqlabel{Jmod3}
\end{eqnarray}
Let ${\mathcal J} \Mm^A_A$ (resp. ${\mathcal J} {\mathcal B}^A_A$)
be the category of right Jacobi $A$-modules (resp. Jacobi
$A$-bimodules) having as morphisms the linear maps which are
compatible with both actions.
\end{definition}

The categories $^A_A{}{\mathcal J} {\mathcal M}$ (resp.
$^A_A{}{\mathcal J} {\mathcal B}$) of left Jacobi $A$-(bi)modules
are defined analogously and there exists an isomorphism of
categories $^A_A{}{\mathcal J} {\mathcal M} \cong {\mathcal J}
\Mm^A_A$ and $^A_A{}{\mathcal J} {\mathcal B} \cong {\mathcal J}
{\mathcal B}^A_A$.

\begin{remarks} \relabel{vbbimodule}
$(1)$ The compatibility conditions \equref{Jmod1}-\equref{Jmod2}
defining the category ${\mathcal J} \Mm^A_A$ are the Jacobi
version of the compatibility conditions \equref{Pmod1} defining
Poisson modules over a Poisson algebra: they are precisely axioms
(J2) and (J4) from the construction of the unified product
(\thref{1}). On the other hand, axiom \equref{Jmod3} defining
Jacobi bimodules has another explanation which will be given in
\leref{dualul} below.

$(2)$ Rephrasing \deref{moduleJac} in terms of representations can
be done as follows: a \emph{representation} of a Jacobi algebra
$A$ on a vector space $V$ is a pair $(\psi, \, \varphi)$
consisting of a representation $\psi$ of the associative algebra
$A$ on $V$, that is an algebra map $\psi: A \to {\rm End}_k (A)$
and a representation $\varphi$ of a Lie algebra $A$ on $V$, i.e. a
Lie algebra map $\varphi : A \to {\rm gl} (V)$, satisfying the
following two compatibilities for any $a$, $b\in A$:
\begin{eqnarray}
\varphi (ab) &=&  \psi (b) \circ \varphi (a) - \psi(a) \circ \varphi (b) + \psi(ab) \circ \varphi(1_A) \eqlabel{repJ1}\\
\psi \bigl([a, \, b]\bigl) &=& \psi(a) \circ \varphi (b) - \varphi(b) \circ \psi(a) + \psi \bigl([1_A, \, b] \, a\bigl) \eqlabel{repJ2}
\end{eqnarray}
Representations of a Jacobi algebra $A$ and right Jacobi
$A$-modules are two different ways of describing the same
structure: more precisely, there exists an equivalence of
categories ${\mathcal J} \Mm^A_A \cong {\rm Rep} (A)$, where ${\rm
Rep} (A)$ is the category of representations of $A$ with the
obvious morphisms. The one-to-one correspondence between right
Jacobi $A$-modules $(\triangleleft, \, \leftharpoonup)$ on $V$ and
representations $(\psi, \, \varphi)$ of $A$ on $V$ is given by the
two-sided formulas: $\psi(a) (x) :=: x\triangleleft a$ and
$\varphi (a) (x) :=: - x \leftharpoonup a$, for all $a\in A$ and
$x\in V$.
\end{remarks}

\begin{examples} \exlabel{modulek}
$(1)$ Any Jacobi algebra $A$ has a canonical structure of a Jacobi
$A$-bimodule via: $x \triangleleft a := xa$ and $x \leftharpoonup
a := [x, \, a]$, for all $x$, $a \in A$. Indeed, for these
actions, axioms \equref{Jmod1} and \equref{Jmod2} are both
equivalent to the compatibility condition \equref{jac1} defining a
Jacobi algebra. On the other hand, axiom \equref{Jmod3} is
equivalent to
$$
[x, \, ab] = [xa, \, b] + [xb, \, a] - [xab, \, 1_A]
$$
which follows trivially by applying several times the
compatibility condition \equref{jac1}.

$(2)$ Any vector space $V$ can be viewed as a Jacobi $A$-bimodule
via the trivial actions: $x \triangleleft a := x$ and $x
\leftharpoonup a := 0$, for all $x \in V$, $a \in A$. We shall
denote by $V_0$ the vector space $V$ equipped with these trivial
actions.

$(3)$ There exist a bijection between the set of all right Jacobi
$A$-module structures $(\triangleleft, \, \leftharpoonup)$ that
can be defined on $k$ and the set of all pairs $(\Lambda, \lambda)
\in A^* \times A^*$, consisting of an algebra map $\Lambda: A \to
k$ and a Lie algebra map $\lambda : A \to k$ satisfying the
following two compatibility conditions for any $a$, $b\in A$:
\begin{eqnarray}
\lambda(ab) = \lambda(a)\, \Lambda (b) + \lambda(b) \, \Lambda(a)
- \lambda(1_A) \, \Lambda (ab), \quad \Lambda([a, \, b]) =
\Lambda(a) \, \Lambda([1_A, \, b]) \eqlabel{modflag2b}
\end{eqnarray}
The bijection is given such that the Jacobi $A$-module structure
$(\triangleleft, \, \leftharpoonup)$ on $k$ associated to
$(\Lambda, \lambda) \in A^* \times A^*$ is defined by $x
\triangleleft a := x \, \Lambda(a)$ and $x \leftharpoonup a := x
\,  \lambda(a)$, for all $a\in A$, $x\in k$. The actions
associated to such a pair $(\Lambda, \lambda)$ endow $k$ with a
Jacobi $A$-bimodule structure since \equref{Jmod3} also holds
thanks to the first compatibility condition of \equref{modflag2b}.
\end{examples}

The proof of the next lemma provides a motivation for introducing
axiom \equref{Jmod3} in \deref{moduleJac}: without it the linear
dual of a right Jacobi $A$-module is not necessarily a right
Jacobi $A$-module.

\begin{lemma}\lelabel{dualul}
Let $A$ be a Jacobi algebra and $(V, \, \triangleleft, \,
\leftharpoonup) \in {\mathcal J} {\mathcal B}^A_A$ a Jacobi
$A$-bimodule. Then the $k$-linear dual $V^* = (V^*, \,
\blacktriangleleft, \, \curvearrowleft) \in {\mathcal J} {\mathcal
B}^A_A$ is a Jacobi $A$-bimodule, where the actions
$\blacktriangleleft$, $\curvearrowleft$ are defined for any
$v^*\in V^*$, $a\in A$ and $x\in V$ by:
\begin{equation}
(v^* \blacktriangleleft a) (x) := v^* (x \triangleleft a), \qquad
(v^* \curvearrowleft a) (x) := - v^* (x \leftharpoonup a)
\end{equation}
In particular, there exists a well defined contravariant functor
$(-)^* : {\mathcal J} {\mathcal B}^A_A \to {\mathcal J} {\mathcal
B}^A_A$ which for finite dimensional Jacobi $A$-bimodules provides
a duality of categories.
\end{lemma}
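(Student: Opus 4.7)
The plan is to verify the five axioms defining a Jacobi $A$-bimodule structure on $V^*$ by evaluating each proposed identity on an arbitrary $x\in V$ and then transferring the question to an identity in $V$ via the defining formulas $(v^*\blacktriangleleft a)(x)=v^*(x\triangleleft a)$ and $(v^*\curvearrowleft a)(x)=-v^*(x\leftharpoonup a)$. Since the identities being checked are $k$-linear equalities in $V^*$, they hold if and only if they hold after evaluation on every $x$, so this reduces the proof to manipulations in $V$.

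First I would check that $(V^*,\blacktriangleleft)$ is a right $A$-module and $(V^*,\curvearrowleft)$ a right Lie $A$-module. The former is the standard transpose action. For the latter, the sign in the definition of $\curvearrowleft$ is essential: evaluating the Lie-module axiom for $V^*$ at $x$ produces two minus signs that cancel, leaving exactly the Lie-module axiom $x\leftharpoonup[a,b]=(x\leftharpoonup a)\leftharpoonup b-(x\leftharpoonup b)\leftharpoonup a$ holding in $V$.

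The crux of the proof is the observation that, under dualization, axioms \equref{Jmod1} and \equref{Jmod3} swap roles, and this is precisely why the bimodule hypothesis is imposed in \deref{moduleJac}. Concretely, evaluating the proposed identity \equref{Jmod1} on $V^*$ at $x$ produces exactly \equref{Jmod3} for $V$, and conversely evaluating \equref{Jmod3} on $V^*$ at $x$ produces \equref{Jmod1} for $V$; both hold by hypothesis. Evaluating \equref{Jmod2} on $V^*$ at $x$ returns \equref{Jmod2} on $V$ after using that $(x\triangleleft a)\triangleleft[1_A,b]=(x\triangleleft[1_A,b])\triangleleft a$, which follows from associativity of $\triangleleft$ together with commutativity of $A$. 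The only point requiring care is the sign bookkeeping coming from $\curvearrowleft$, which I expect to be the main (though purely mechanical) computational hazard.

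For the categorical statement, if $f:V\to W$ is a morphism in ${\mathcal J}{\mathcal B}^A_A$, then its transpose $f^*:W^*\to V^*$ intertwines both $\blacktriangleleft$ and $\curvearrowleft$ by direct inspection of the defining formulas, yielding the contravariant functor $(-)^*:{\mathcal J}{\mathcal B}^A_A\to{\mathcal J}{\mathcal B}^A_A$. When $V$ is finite dimensional, the canonical evaluation map $V\to V^{**}$, $x\mapsto\bigl(v^*\mapsto v^*(x)\bigr)$, is a linear isomorphism, and unwinding the definitions shows it is compatible with both actions; this delivers the asserted duality of categories on the full subcategory of finite dimensional Jacobi $A$-bimodules.
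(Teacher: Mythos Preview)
Your proposal is correct and follows essentially the same approach as the paper's own proof: both verify the module axioms by evaluating at $x\in V$, both identify the key phenomenon that \equref{Jmod1} and \equref{Jmod3} swap under dualization, and both note that \equref{Jmod2} is self-dual. Your treatment is slightly more explicit, in particular in spelling out the commutativity step $(x\triangleleft a)\triangleleft[1_A,b]=(x\triangleleft[1_A,b])\triangleleft a$ needed for the self-duality of \equref{Jmod2}, and in addressing the functoriality and double-dual statements, which the paper leaves implicit.
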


\begin{proof}
$(V^*, \, \curvearrowleft)$ is a right Lie $A$-module and $(V^*,
\, \blacktriangleleft)$ is a right $A$-module since $A$ is a
commutative algebra. A straightforward computation shows that the
compatibility condition \equref{Jmod1} holds for $(V^*, \,
\blacktriangleleft, \, \curvearrowleft)$ if and only if
\equref{Jmod3} holds for $(V, \, \triangleleft, \,
\leftharpoonup)$ and similar \equref{Jmod3} holds for $(V^*, \,
\blacktriangleleft, \, \curvearrowleft)$ if and only if
\equref{Jmod1} holds for $(V, \, \triangleleft, \,
\leftharpoonup)$. Finally, \equref{Jmod2} for $(V^*, \,
\blacktriangleleft, \, \curvearrowleft)$ is equivalent to
\equref{Jmod2} for $(V, \, \triangleleft, \, \leftharpoonup)$.
\end{proof}

We shall view the Jacobi algebra $A$ as an object in ${\mathcal J}
{\mathcal B}^A_A$ via the actions $x \triangleleft a := xa$ and $x
\leftharpoonup a := [x, \, a]$, for all $x$, $a\in A$. It follows
from \leref{dualul} that the $k$-linear dual $A^*$ is a Jacobi
$A$-bimodule whose actions take the following form:
\begin{equation}\eqlabel{actdualA}
(a^* \blacktriangleleft a) (x) := a^* (ax), \qquad
(a^* \curvearrowleft a) (x) := a^* ([a, \, x])
\end{equation}
From now on we will see $A$ and $A^*$ as objects in ${\mathcal J}
{\mathcal B}^A_A$ via the above structures.

\begin{definition}\delabel{deffrojac}
A Jacobi algebra $A$ is called \emph{Frobenius} if there exists an
isomorphism $A \cong A^*$ in ${\mathcal J} {\mathcal B}^A_A$, i.e.
an isomorphism of right $A$-modules and right Lie $A$-modules.
\end{definition}

Any Frobenius Jacobi algebra is Frobenius as an associative
algebra and as a Lie algebra. Now we introduce the concept of
integral on a Jacobi algebra: as in the case of Hopf
algebras \cite{radford}, integrals will be intimately related
to the property of being Frobenius.

\begin{definition}\delabel{integrale}
An \emph{integral} on a Jacobi algebra $A$ is an element $\nu \in A^*$ such that
\begin{equation} \eqlabel{defint}
\nu \bigl( \left[a, \, b\right] \, c \bigl) = \nu \bigl( a\, \left[b, \, c\right] \bigl)
\end{equation}
for all $a$, $b$, $c\in A$. We denote by $\int_A$ the space of
integrals on a Jacobi algebra $A$. An integral $\nu$ is called
\emph{non-degenerate} if: $\nu (a x ) = 0$, for all $x\in A$
implies $a = 0$.
\end{definition}

If $\nu$ is an integral on $A$ then by taking $b = c = 1_A$ in
\equref{defint} we obtain that $\nu \bigl( [a, \, 1] \bigl) = 0$.

\begin{examples} \exlabel{exeint}
$(1)$ If $A$ is an abelian Jacobi algebra, then $\int_A = A^*$.

$(2)$ Let $A$ be a unital Poisson algebra and $\nu \in \int_A$.
Then by taking $c = 1_A$ in \equref{defint} we obtain that $\nu
(x) = 0$, for any $x \in A' := [A, \, A]$. In particular, it
follows that $\int_A = 0$, for any Poisson algebra $A$ which is
perfect as a Lie algebra.

$(3)$ The trace map ${\rm Tr} : {\rm M}_n(k) \to k$ satisfies
\equref{defint} since ${\rm Tr} \bigl([A, \, B] \,C
\bigl) = {\rm Tr} \bigl( A \, [B, \, C] \bigl)$, for all $n\times
n$-matrices $A$, $B$, $C$. Thus, for any finite dimensional Jacobi
algebra $A$ which is also a subalgebra of ${\rm M}_n(k)$ and a Lie
subalgebra of ${\rm gl} (n, \, k)$, the restriction of the trace
map ${\rm Tr}_{|\, A}$ is an integral on $A$.

$(4)$ Let $u\in U(A)$ be an invertible element of a Jacobi algebra
$A$. Then the map
$$
\int_A  \longrightarrow  \int_{A_u}, \qquad \nu \mapsto \nu_u :=
\nu(u^2 \, \cdot)
$$
is a bijection between the integrals on $A$ and those on the
$u$-conformal deformation $A_u$. Indeed, we can easily prove that
$\nu_u (a) := \nu (u^2 a)$ is an integral on $A_u$ for any
integral $\nu$ on $A$ and the inverse of the map $\nu \mapsto
\nu_u$ is given by $\mu \mapsto \mu(u^{-2} \, \cdot)$ -- we note
that $(A_u)_{u^{-1}} = A$. We can also prove that an integral
$\nu$ on $A$ remains an integral on $A_u$ if and only if $2 \, [u,
\, x] = 2u \, [1_A, \, x]$, for all $x\in A$.

$(5)$ Let $J^{2,4}$ be the Jacobi algebra of \exref{dim2}. Then
$\nu$ is an integral on $J^{2,4}$ if and only if $\nu (1) =
\alpha$ and $\nu (x) = 0$, for some $\alpha \in k$, i.e.
$\int_{J^{2,4}} \cong k$.

$(6)$ Let $J^{3}_{11}$ be the Jacobi algebra from Table \ref{10}.
Then $\nu$ is an integral on $J^{3}_{11}$ if and only if $\nu (1)
= \alpha$, \, $\nu (x) = \nu (y) = 0$, for some $\alpha \in k$,
i.e. $\int_{J^{3}_{11}} \cong k$.
\end{examples}

\begin{lemma} \lelabel{cesuntint}
Let $A$ be a Jacobi algebra. There exists a bijection between
$\int_A$ and the space of all (symmetric) associative, invariant
bilinear forms on $A$, i.e. bilinear maps $B : A \times A \to k$
satisfying the following compatibility conditions for any $a$,
$b$, $c\in A$:
\begin{equation} \eqlabel{formemetric}
B(ab, \, c) = B(a, \, bc), \qquad B([a, \, b], \, c) = B(a, \, [b,
\, c])
\end{equation}
\end{lemma}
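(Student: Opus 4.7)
The plan is to exhibit the bijection explicitly by two mutually inverse maps, and then check the defining axioms on each side.

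First I would define, for any integral $\nu \in \int_A$, the bilinear form $B_\nu : A \times A \to k$ by $B_\nu(a,b) := \nu(ab)$, and conversely, for any bilinear form $B$ satisfying \equref{formemetric}, the linear map $\nu_B \in A^*$ by $\nu_B(a) := B(a, 1_A)$. The verifications that these land in the correct target spaces are short: symmetry $B_\nu(a,b) = B_\nu(b,a)$ is immediate since $A$ is commutative; associativity $B_\nu(ab,c) = \nu((ab)c) = \nu(a(bc)) = B_\nu(a,bc)$ follows from associativity of $A$; invariance $B_\nu([a,b],c) = B_\nu(a,[b,c])$ is precisely the defining relation \equref{defint} of an integral. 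Conversely, using the associativity of $B$ we compute $\nu_B([a,b]\,c) = B([a,b]\,c, 1_A) = B([a,b], c)$ and $\nu_B(a\,[b,c]) = B(a\,[b,c], 1_A) = B(a, [b,c])$, so invariance of $B$ yields \equref{defint} for $\nu_B$.

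Next I would check that the two assignments are mutually inverse. On one side, $\nu_{B_\nu}(a) = B_\nu(a, 1_A) = \nu(a \cdot 1_A) = \nu(a)$, so $\nu_{B_\nu} = \nu$. On the other side, $B_{\nu_B}(a,b) = \nu_B(ab) = B(ab, 1_A) = B(a, b \cdot 1_A) = B(a,b)$ by associativity of $B$, so $B_{\nu_B} = B$.

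There is no real obstacle here: the argument is essentially a direct translation between the two formulations, where the role of the unit $1_A$ is used to reduce three-variable data to two-variable data and back. The one mildly subtle point worth flagging in the write-up is that symmetry of $B_\nu$ comes for free from commutativity of $A$, which is precisely why the adjective ``symmetric'' is placed in parentheses in the statement.
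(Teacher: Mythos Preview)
Your proof is correct and follows essentially the same approach as the paper: both use the maps $\nu \mapsto B_\nu(a,b) := \nu(ab)$ and $B \mapsto \nu_B := B(-, 1_A)$ and check they are mutually inverse. The paper additionally remarks that \emph{any} associative bilinear form $B$ is automatically symmetric via $B(a,b) = B(1_A, ab) = B(1_A, ba) = B(b,a)$, which is a slightly stronger way of justifying the parenthetical ``(symmetric)'' than just noting that $B_\nu$ is symmetric, but this is a cosmetic difference.
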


\begin{proof}
Any bilinear form $B$ satisfying \equref{formemetric} is symmetric
since $A$ is a commutative algebra: $B(a, \, b) = B(1_A, \, ab) =
B(1_A, \, ba) = B(b, \, a)$, for all $a$, $b\in A$. If $\nu \in
\int_A$ is an integral on $A$, then $B_{\nu} (a, \, b) := \nu
(ab)$ is an associative, invariant, bilinear form on $A$;
conversely, if $B$ is such a form then $\nu_B : A \to k$, $\nu_B
(a) := B(a, \, 1_A) = B(1_A, \, a)$ is an integral on $A$ and the
correspondence $(\nu \mapsto B_{\nu}, \, B\mapsto \nu_B)$ is
clearly bijective.
\end{proof}

The equivalences $(1)-(2)-(4)$ in the theorem below can be seen as
the Jacobi versions of the classical characterization of Frobenius
algebras (\cite{CMZ2002}). The equivalence $(1)-(3)$ is the Jacobi
counterpart of Sullivan's theorem \cite{radford} for Hopf
algebras: a Hopf algebra $H$ is co-Frobenius if and only if there
exists a non-zero integral on $H$.

\begin{theorem}\thlabel{caracfrj}
Let $A$ be a Jacobi algebra. The following are equivalent:

$(1)$ $A$ is a Frobenius Jacobi algebra;

$(2)$ there exists a nondegenerate associative, invariant,
bilinear form on $A$;

$(3)$ there exists a nondegenerate integral on the Jacobi algebra
$A$;

Furthermore, if $A$ is finite dimensional then the above
statements are also equivalent to:

$(4)$ there exists a pair $(\nu, \, e = \sum e^1 \ot e^2)$,
consisting of an integral $\nu \in \int_A$ on $A$ and an element
$e = \sum e^1 \ot e^2 \in A\ot A$ such that for any $a\in A$ we
have:
\begin{equation} \eqlabel{casimir}
\sum ae^1 \ot e^2 = \sum e^1 \ot e^2 a, \qquad \sum \nu (e^1) e^2 = \sum e^1 \nu (e^2) = 1_A
\end{equation}
We call such a pair $(\nu, \, e = \sum e^1 \ot e^2) \in \int_A
\times \, (A\ot A)$ a \emph{Jacobi-Frobenius pair} and $\omega_A
:= \sum e^1 e^2 \in A$ the \emph{Euler-Casimir} element of $A$.
\end{theorem}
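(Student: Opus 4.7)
The plan is to organize the proof as $(2) \Leftrightarrow (3)$, $(1) \Leftrightarrow (3)$, and finally $(1) \Leftrightarrow (4)$ under the finite-dimensionality hypothesis. The equivalence $(2) \Leftrightarrow (3)$ is essentially already at hand: \leref{cesuntint} provides the mutually inverse assignments $\nu \mapsto B_\nu$ and $B \mapsto \nu_B$, and under this bijection the condition $\nu(aA)=0 \Rightarrow a=0$ translates literally into $B(a,A)=0 \Rightarrow a=0$, so nondegeneracy is preserved in both directions.

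For $(1) \Leftrightarrow (3)$ the key observation is that any right $A$-module morphism $\varphi : A \to A^*$ is determined by its value on $1_A$: setting $\nu := \varphi(1_A)$, the identity $\varphi(a) = \nu \blacktriangleleft a$ combined with the explicit action formula \equref{actdualA} yields $\varphi(a)(x) = \nu(ax)$. Under this dictionary, injectivity of $\varphi$ is precisely nondegeneracy of $\nu$, while compatibility of $\varphi$ with the right Lie actions $\leftharpoonup$ on $A$ and $\curvearrowleft$ on $A^*$ unpacks, again through \equref{actdualA}, to $\nu([a,b]c) = \nu(a[b,c])$, which is exactly \equref{defint}. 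Conversely, starting from a nondegenerate integral $\nu$, the formula $\varphi(a)(x) := \nu(ax)$ and the same dictionary run in reverse produce a morphism in ${\mathcal J} {\mathcal B}^A_A$ which is an isomorphism whenever $A \cong A^*$ as vector spaces.

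To prove $(1) \Rightarrow (4)$ I will fix a basis $\{a_i\}$ of $A$ with dual basis $\{a_i^*\}$ and set $\nu := \varphi(1_A)$ together with $e := \sum_i a_i \otimes \varphi^{-1}(a_i^*) \in A \otimes A$. The two normalizations $\sum \nu(e^1)e^2 = 1_A = \sum e^1 \nu(e^2)$ drop out from expanding $1_A$ and $\nu$ in the chosen bases combined with the symmetry $\nu(ab) = \nu(ba)$, while the bimodule identity $\sum a e^1 \otimes e^2 = \sum e^1 \otimes e^2 a$ reduces, after pairing the first tensorand with an arbitrary $f \in A^*$, to the statement that $\varphi^{-1}$ is a right $A$-module map. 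For $(4) \Rightarrow (1)$, I build $\varphi$ from $\nu$ as above (already verified to lie in ${\mathcal J} {\mathcal B}^A_A$) and exhibit an explicit inverse $\psi(a^*) := \sum a^*(e^1)e^2$; the identity $\sum \nu(ae^1)e^2 = a$, obtained from centrality of $e$ by applying $\nu \otimes \id$, gives $\psi \circ \varphi = \id_A$, and the identity $\sum a^*(e^1)\nu(e^2 x) = a^*(x)$, obtained from commutativity of $A$ and the normalization $\sum e^1 \nu(e^2) = 1_A$, gives $\varphi \circ \psi = \id_{A^*}$.

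I expect the main difficulty to be the bookkeeping in $(1) \Leftrightarrow (4)$: keeping straight which of the two Casimir identities in \equref{casimir} corresponds to which module compatibility of $\varphi$, and carrying through the sign implicit in the dual Lie action $\curvearrowleft$ from \leref{dualul}. Once the dictionary $\varphi \leftrightarrow (\nu, e)$ is in place, every remaining verification reduces to a routine manipulation built on commutativity of $A$, the integral axiom \equref{defint}, and \leref{cesuntint}.
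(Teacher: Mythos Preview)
Your proposal is correct and follows essentially the same approach as the paper. The only organizational difference is that the paper routes the equivalence as $(1)\Leftrightarrow(2)$ via the bilinear-form/isomorphism dictionary and then $(2)\Leftrightarrow(3)$ via \leref{cesuntint}, whereas you go $(1)\Leftrightarrow(3)$ directly by reading off $\nu=\varphi(1_A)$; the content is identical. Your treatment of $(1)\Leftrightarrow(4)$, including the construction $e=\sum a_i\otimes\varphi^{-1}(a_i^*)$ and the explicit inverse $\psi(a^*)=\sum a^*(e^1)e^2$, matches the paper's verbatim.
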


\begin{proof}
$(1) \Leftrightarrow (2)$ Follows from the one-to-one
correspondence between the set of all $k$-linear isomorphisms $f:
A \to A^*$ and the set of all nondegenerate bilinear forms $B : A
\times A \to k$ given by the two-sided formula $f (a) (b) :=: B(a,
b)$, for all $a$, $b\in A$. Under this bijection, the right
$A$-module (resp. right Lie $A$-module) maps $f: A \to A^*$
correspond to those bilinear forms $B: A\times A \to k$ that
satisfy the left (resp. right) hand part of \equref{formemetric}.

$(2) \Leftrightarrow (3)$ Follows from \leref{cesuntint} since
under the bijective correspondence $(\nu \mapsto B_{\nu}, \,
B\mapsto \nu_B)$ nondegenerate forms on $A$ correspond to
nondegenerate integrals on $A$. We note that the isomorphism of
right Jacobi $A$-modules $f = f_{\nu} : A \to A^*$ associated to a
nondegenerate integral $\nu$ is defined by $f_{\nu} (a) := \nu
\blacktriangleleft a$, i.e. $f_{\nu} (a) (x) = \nu (ax)$, for all
$a$, $x\in A$. Conversely any nondegenerate integral $\nu = \nu_f$
on $A$ arises from an isomorphism of right Jacobi $A$-modules $f:
A \to A^*$ via $\nu_f := f(1_A)$.

$(1) \Leftrightarrow (4)$ Let $\{e_i, \, e_i^* \, | \, i = 1,
\cdots, n \}$ be a dual basis of $A$ and $f: A \to A^*$ and
isomorphism of Jacobi $A$-bimodules. Then, $\nu := f(1_A)$ is a
Jacobi integral and by a straightforward computations we can see
that $(f(1_A), \, e:= \sum_{i=1}^n \, e_i \ot f^{-1} (e_i^*)$ is a
Jacobi-Frobenius pair. Conversely, if $(\nu, \, e = \sum e^1 \ot
e^2)$ is a Jacobi-Frobenius pair then the map $f = f_{\nu}: A \to
A^*$, $f (a) (b) := \nu (ab)$, for all $a$, $b\in A$ is an
isomorphism of Jacobi $A$-bimodules with the inverse $f^{-1} : A^*
\to A$ given by $f^{-1} (a^*) := \sum a^* (e^1) \, e^2$, for all
$a^* \in A^*$.
\end{proof}

\begin{remarks} \relabel{comfrob}
$(1)$ Integrals on a unital Poisson algebra $A$ are defined
exactly as in \deref{integrale} and a Poisson algebra $A$ is
called Frobenius if there exists an isomorphism of Poisson modules
$A \cong A^*$. The Poisson version of \thref{caracfrj} has the
same statement and we can rephrase this characterization by saying
that a Poisson algebra is Frobenius if and only if it is Frobenius
when viewed as a Jacobi algebra.

$(2)$ Let $\nu \in \int_A $ be a nondegenerate integral on a
finite dimensional Jacobi algebra $A$. It follows from
\thref{caracfrj} that $(A^*, \blacktriangleleft)$ is free as a
right $A$-module having $\{\nu \}$ as a basis, i.e. $A^* = \nu
\blacktriangleleft A$. This is the Jacobi version of a well-know
result for Hopf algebras (\cite[Corollary 10.6.6]{radford}).

$(3)$ Using $(1) \Leftrightarrow (2)$ of \thref{caracfrj} we
obtain that the abelian Jacobi algebra $A_0$ is Frobenius as a
Jacobi algebra if and only if $A$ is Frobenius as an associative
algebra.

$(4)$ Let $A$ be a finite dimensional Frobenius Jacobi algebra
with the Jacobi-Frobenius pair $(\nu, \, e = \sum e^1 \ot e^2)$.
In the case of associative Frobenius algebras the element
$\omega_A := \sum e^1 e^2 \in A$ does not depend on the choice of
the Jacobi-Frobenius pair and it is called in \cite{abr} the
\emph{Euler (characteristic)} element of $A$ since it is the
algebraic counterpart of the Euler class of a connected, oriented,
finite dimensional and compact manifold. In the context of finite
dimensional Lie algebras equipped with an invariant bilinear form,
the same element viewed in the enveloping algebra, is called the
Casimir element \cite[Proposition 11]{bour}. These explain the
terminology used in $(4)$ of \thref{caracfrj}.
\end{remarks}

\begin{examples}\exlabel{exjafrone}
$(1)$ Let $C_n$ be the cyclic group of order $n$ and $k$ a field
such that ${\rm char} \, (k) = p \, | \, n-1 $. Let $A :=k[C_n]$
be the Jacobi algebra with the bracket $[c^i, \, c^j] := (j-i)
c^{j+i}$, for all $i$, $j = 0, \cdots, n-1$. Then $\int_{k[C_n]} =
0$. Indeed, let $\nu \in \int_{k[C_n]}$. By applying
\equref{defint} in $a = b := c^0 = 1$ and $c := c^i$, we obtain
$\nu (c^i) = 0$, for any $i = 0, \cdots, n-1$. Since any group
algebra $k[G]$ of a finite group $G$ is Frobenius \cite{kadison},
we obtain that $k[C_n]$ is Frobenius as an associative algebra and
using \thref{caracfrj} we obtain that it is not Frobenius as a
Jacobi algebra.

$(2)$ Let $u \in U(A)$ be an invertible element of a Jacobi
algebra $A$. Then, the $u$-conformal deformation $A_u$ is a
Frobenius Jacobi algebra if and only if $A$ is a Frobenius Jacobi
algebra. The result follows from \thref{caracfrj} and $(4)$ of
\exref{exeint} since the bijection there preserves the
non-degeneration of integrals.

$(3)$ The equivalence $(1)-(3)$ of \thref{caracfrj} provides a
very efficient criterion for deciding when a given Jacobi algebra
is Frobenius. For instance, the only $2$-dimensional Frobenius
Jacobi algebras are $J^{2, 1}$, $J^{2, 3}$ and $J^{2}_d$: for each
of them the linear map $\nu$ defined by $\nu (1) := 0$ and $\nu
(x) : = 1$ is a non-degenerate integral. In the same manner, we
can easily prove that among the eleven types of $3$-dimensional
complex Jacobi algebras given in \exref{dim3} there are only three
Frobenius Jacobi algebras, namely $J^{3}_{2}$, $J^{3}_{3}$ and
$J^{3}_{4}$. For each of them the linear map $\nu$ defined by $\nu
(1) := a_1$, $\nu (x) := a_2$ and $\nu (y) := a_3$ is a
non-degenerate integral, for all $a_1$, $a_2$, $a_3 \in k$ such
that $a_2 a_3 \neq 0$ and $a_1 \neq a_2 + a_3$.
\end{examples}

We end the section with the following two questions:

\textbf{Question 1:} \emph{Does there exist a finite dimensional Jacobi
algebra $A$ which is Frobenius both as an associative algebra and
as a algebra Lie, but is not Frobenius as a Jacobi algebra?}

\textbf{Question 2:} \emph{Let $A$ be a finite dimensional Frobenius
Jacobi algebra such that the Euler-Casimir element $\omega_A$ is
invertible in $A$. Is every right Jacobi $A$-module completely
reducible (i.e. is $A$ semisimple as a Jacobi algebra)?}

\section{Unified products for Jacobi algebras}\selabel{unifiedprod}
In this section we shall answer the extending structures
ES-problem for Jacobi algebras: i.e. we shall describe and
classify all Jacobi algebras containing a given Jacobi algebra $A$
as a subalgebra of a fixed codimension. We start by explaining
what we mean by classification within the ES-problem. Let $A$ be a
Jacobi algebra, $E$ a vector space such that $A$ is a subspace of
$E$ and let $V$ be a fixed complement of $A$ in $E$, i.e. $V$ is a
subspace of $E$ such that $E = A + V$ and $A \cap V = \{0\}$. Let
${\mathcal J} \, (A, \, E)$ be the category whose objects are all
Jacobi algebra structures $(\cdot_E, [- , - ]_E)$ that can be
defined on $E$ such that $A$ becomes a Jacobi subalgebra of $(E,
\cdot_E, [- , - ]_E)$. A morphism $\varphi: (\cdot_E, [- , - ]_E)
\to (\cdot'_E, [- , - ]'_E)$ in ${\mathcal J} \, (A, \, E)$ is a
morphism of Jacobi algebras $\varphi: (E, \cdot_E, \{-, \, -\}_E)
\to (E, \cdot'_E, \{-, \, -\}^{'}_E)$ which stabilizes $A$ and
co-stabilizes $V$, i.e. the diagram
\begin{eqnarray} \eqlabel{diagrama1}
\xymatrix {& A \ar[r]^{i} \ar[d]_{Id} & {E}
\ar[r]^{\pi} \ar[d]^{\varphi} & V \ar[d]^{Id}\\
& A \ar[r]^{i} & {E}\ar[r]^{\pi } & V}
\end{eqnarray}
is commutative, where $\pi : E \to V$ is the canonical projection
of $E = A + V$ on $V$ and $i: A \to E$ is the inclusion map. In
this case we say that the Jacobi algebra structures $(\cdot_E, [-
, - ]_E)$ and $(\cdot'_E, [- , - ]'_E)$ on $E$ are
\emph{cohomologous} and we denote this by $(\cdot_E, \{-, \,
-\}_E) \approx (\cdot'_E, \{-, \, -\}^{'}_E)$. Any linear map
$\varphi$ which makes diagram \equref{diagrama1} commutative is
bijective, thus the category ${\mathcal J} \, (A, \, E)$ is a
groupoid, i.e. any morphism is an isomorphism. In particular, we
obtain that $\approx$ is an equivalence relation on the set of
objects of ${\mathcal J} \, (A, \, E)$ and we denote by ${\rm
Extd}_{{\mathcal J}} \, (E, \, A)$ the set of all equivalence
classes, i.e. ${\rm Extd}_{{\mathcal J}} \, (E, \, A) := {\mathcal
J} \, (A, \, E)/\approx$. ${\rm Extd}_{{\mathcal J}} \, (E, \, A)$
is the classifying object for the ES-problem: it classifies all
Jacobi algebra structures that can be defined on $E$ containing
$A$ as a Jacobi subalgebra up to an isomorphism that stabilizes
$A$ and co-stabilizes $V$. The answer to the ES-problem will be
provided by explicitly computing ${\rm Extd}_{{\mathcal J}} \, (E,
\, A)$ for a given Jacobi algebra $A$ and a vector space $E$. From
geometrical point of view this means to give the decomposition of
the groupoid ${\mathcal J} \, (A, \, E)$ into connected components
and to indicate a 'point' in each such component. The main result
of this section proves that ${\rm Extd}_{{\mathcal J}} (E, \, A)$
is parameterized by a non-abelian cohomological type object
${\mathcal J}{\mathcal H}^{2} \, (V, \, A)$ that will be
explicitly constructed and the bijection between ${\mathcal
J}{\mathcal H}^{2} \, (V, \, A)$ and ${\rm Extd}_{{\mathcal J}}
(E, \, A)$ will be indicated.

\begin{definition}\delabel{exdatum}
Let $A$ be a Jacobi algebra and $V$ a vector space. An
\textit{extending datum of $A$ through $V$} is a system $\Upsilon
(A, V) = \bigl(\triangleleft, \, \triangleright, \, f, \, \cdot,
\, \leftharpoonup, \, \rightharpoonup, \, \theta, \, \{-, \, -\}
\bigl)$ consisting of eight bilinear maps
\begin{eqnarray*}
&& \triangleleft : V \times A \to V, \quad \triangleright : V
\times A \to A, \quad f: V\times V \to A, \quad \cdot \, : V\times
V \to V \\
&& \leftharpoonup: V \times A \to V, \quad \rightharpoonup : V
\times A \to A, \quad \theta: V\times V \to A, \quad \{-, \, -\} :
V\times V \to V
\end{eqnarray*}

Let $\Upsilon (A, V) = \bigl(\triangleleft, \, \triangleright, \,
f, \, \cdot, \, \leftharpoonup, \, \rightharpoonup, \, \theta, \,
\{-, \, - \} \bigl)$ be an extending datum of a Jacobi algebra $A$
through a vector space $V$. We denote by $ A \, \ltimes_{\Upsilon
(A, V)} V = A \, \ltimes V$ the vector space $A \, \times V$
together with the multiplication $\bullet$ and the bracket $[-, \,
-]$ defined by:
\begin{eqnarray}
(a, \, x) \bullet (b, \, y) &:=& \bigl( ab + x \triangleright b +
y \triangleright a + f(x, y), \,\, x\triangleleft b + y \triangleleft a  +
x \cdot y \bigl) \eqlabel{multunifJ} \\
\left [(a, x), \, (b, y) \right ] &:=& \bigl( [a, \, b] + x
\rightharpoonup b - y \rightharpoonup a + \theta (x, y), \,\,
x\leftharpoonup b - y \leftharpoonup a + \{x, \, y \} \bigl)
\eqlabel{brackunifJ}
\end{eqnarray}
for all $a$, $b \in A$ and $x$, $y \in V$. The object $A\ltimes V$
is called the \textit{unified product} of $A$ and $\Upsilon (A,
V)$ if it is a Jacobi algebra with the multiplication defined by
\equref{multunifJ}, the unit $(1_A, \, 0_V)$ and the bracket given
by \equref{brackunifJ}. In this case the extending datum $\Upsilon
(A, V)$ is called a \textit{Jacobi extending structure} of $A$
through $V$.
\end{definition}

The next theorem provides the necessary and sufficient conditions
that need to be fulfilled by an extending datum $\Upsilon (A, V)$
such that $A \ltimes V$ is a unified product.

\begin{theorem}\thlabel{1}
Let $A = (A, \, m_A, \, [-, \, -])$ be a Jacobi algebra, $V$ a
vector space and $\Upsilon (A, V) = \bigl(\triangleleft, \,
\triangleright, \, f, \, \cdot, \, \leftharpoonup, \,
\rightharpoonup, \, \theta, \, \{-, \, - \} \bigl)$ an extending
datum of $A$ through $V$. Then $A \ltimes V$ is a unified product
if and only if the following compatibilities hold:

$(J0)$ \, $\bigl(\triangleleft, \, \triangleright, \, f, \,
\cdot)$ is an algebra extending system of the associative algebra
$A$ through $V$ and $\bigl(\leftharpoonup, \, \rightharpoonup, \,
\theta, \, \{-, \, - \} \bigl)$ is a Lie extending system of the
Lie algebra $A$ through $V$;

$(J1)$ \, $x \rightharpoonup (ab) = (x \rightharpoonup a) \, b +
(x \leftharpoonup a) \triangleright b + a \, (x \rightharpoonup b)
+ (x \leftharpoonup b) \triangleright a - ab \, (x \rightharpoonup
1_A) \, - $

$ \qquad \qquad \qquad \,\,\,\,\,\,\, - \, (x \leftharpoonup 1_A)
\triangleright (ab) $

$(J2)$ \, $x \leftharpoonup (ab) = (x \leftharpoonup a)
\triangleleft b + (x \leftharpoonup b) \triangleleft a - (x
\leftharpoonup 1_A) \triangleleft (ab)$

$(J3)$ \, $x \, \triangleright [a, \, b] = [x \triangleright a, \,
b] + (x \triangleleft a) \rightharpoonup b - a \, (x
\rightharpoonup b) - (x \leftharpoonup b) \triangleright a + (x
\triangleright a) [1_A, \, b] + (x \triangleleft a) \triangleright
[1_A, \, b]$

$(J4)$ \, $x  \triangleleft [a, \, b] = (x \triangleleft a)
\leftharpoonup b - (x  \leftharpoonup b) \triangleleft a + (x
\triangleleft a) \triangleleft [1_A, \, b] $

$(J5)$ \, $ \{x, \, y\} \triangleright a  = \theta (x
\triangleleft a, \, y) - a \, \theta (x, \, y) + f (y
\leftharpoonup a, \, x) - f (x \triangleleft a, \, y
\leftharpoonup 1_A)  -  y \rightharpoonup (x \triangleright a) + $

\,\, $ \qquad \,\, + \, x \triangleright (y \rightharpoonup a) -
(x \triangleright a) (y \rightharpoonup 1_A) - (x \triangleleft a)
\triangleright (y \rightharpoonup 1_A) - (y \leftharpoonup 1_A)
\triangleright (x \triangleright a) $

$(J6)$ \, $ \{x, \, y\} \triangleleft a = \{x \triangleleft a, \,
y \} - y \leftharpoonup (x \triangleright a) + x \triangleleft (y
\rightharpoonup a) + (y \leftharpoonup a) \cdot x \, - $

\,\, $ \qquad \,\, - (x \triangleleft a) \triangleleft (y
\rightharpoonup 1_A) - (y \leftharpoonup 1_A) \triangleleft (x
\triangleright a) - (x \triangleleft a) \cdot (y \leftharpoonup
1_A) $

$(J7)$ \, $ (x \cdot y) \rightharpoonup a = x \triangleright (y
\rightharpoonup a) + y \triangleright (x \rightharpoonup a) + f (x
\leftharpoonup a, \, y) + f (x, \, y \leftharpoonup a) - [f(x, \,
y), \, a] \, - $

\,\, $  \qquad \,\, - f(x, \, y) [1_A, \, a] - (x \cdot y)
\triangleright [1_A, \, a] $

$(J8)$ \, $ (x \cdot y) \leftharpoonup  a =  x \cdot (y
\leftharpoonup a) + (x \leftharpoonup a) \cdot y + x \triangleleft
(y \rightharpoonup a) + y  \triangleleft (x \rightharpoonup a) -
(x\cdot y) \triangleleft [1_A, \, a] $

$(J9)$ \, $ \theta (x \cdot y, \, z) = x \triangleright \theta (y,
\, z) + y \triangleright \theta (x, \, z) + z \rightharpoonup f(x,
\, y) + f \bigl(\{x, \, z\}, \, y \bigl) + f \bigl(x, \, \{y, \,
z\}\bigl) \, + $

\,\, $  \qquad \,\,  + f(x, \, y) (z \rightharpoonup 1_A) +
(x\cdot y) \triangleright (z \rightharpoonup 1_A) + (z
\leftharpoonup 1_A) \triangleright f(x, \, y) + f(x\cdot y, \, z
\leftharpoonup 1_A)$

$(J10)$ $ \{x\cdot y, \, z \} = x \cdot \{y, \, z \} + \{x, \, z\}
\cdot y + z \leftharpoonup f(x, \, y) + x \triangleleft \theta (y,
\, z) + y \triangleleft \theta (x, \, z) \, + $

\,\, $ \qquad \,\, + \, (x\cdot y) \triangleleft (z
\rightharpoonup 1_A) + (z\leftharpoonup 1_A) \triangleleft f(x, \,
y) + (x \cdot y) \cdot (z \leftharpoonup 1_A) $

for all $a$, $b \in A$, $x$, $y$, $z \in V$.
\end{theorem}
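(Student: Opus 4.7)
The plan is to reduce the theorem to two independent checks. First, I would show that axiom (J0) is equivalent to the following two structural facts: $(A \times V, \bullet, (1_A, 0_V))$ is a commutative unital associative algebra and $(A \times V, [-,-])$ is a Lie algebra. Indeed, by the commutative version of \cite[Theorem 2.2]{am-2013d} recalled after \deref{comexdatum}, the first is equivalent to $(\triangleleft, \triangleright, f, \cdot)$ being an algebra extending system of $A$ through $V$, that is, axioms (A1)--(A6). Likewise, by \cite[Theorem 2.2]{am-2013} recalled after \deref{exdatum}, the second is equivalent to $(\leftharpoonup, \rightharpoonup, \theta, \{-,-\})$ being a Lie extending system of $A$ through $V$, that is, axioms (L1)--(L6).

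Assuming (J0), the remaining task is to verify that the Jacobi compatibility
\begin{equation*}
\bigl[(a,x) \bullet (b,y),\, (c,z)\bigr] = (a,x) \bullet \bigl[(b,y),(c,z)\bigr] + \bigl[(a,x),(c,z)\bigr] \bullet (b,y) - (a,x) \bullet (b,y) \bullet \bigl[(1_A, 0_V),(c,z)\bigr]
\end{equation*}
on $A \ltimes V$ is equivalent to the conjunction of (J1)--(J10). Because $\bullet$ and $[-,-]$ are bilinear, the identity is multilinear in $a, b, c \in A$ and $x, y, z \in V$, and splitting by $A \times V$-components yields two equations in each situation. It is therefore enough to test the two resulting equations on the eight triples in which each of $(a,x), (b,y), (c,z)$ is either of the form $(*, 0_V)$ or $(0_A, *)$.

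The triple $\bigl((a,0),(b,0),(c,0)\bigr)$ reduces to the Jacobi compatibility inside $A$ and is automatic. Expanding each of the seven remaining triples via \equref{multunifJ}--\equref{brackunifJ} and simplifying using (A1)--(A6) and (L1)--(L6) produces, in each case, a pair of identities matching exactly two of the axioms (J1)--(J10). Specifically, $\bigl((a,0),(b,0),(0,z)\bigr)$ produces (J1) in the $A$-component and (J2) in the $V$-component; $\bigl((a,0),(0,y),(c,0)\bigr)$ produces (J3) and (J4), while $\bigl((0,x),(b,0),(c,0)\bigr)$ reproduces the same pair because of the commutativity of $\bullet$ and the $a\leftrightarrow b$ symmetry of \equref{jac1}; $\bigl((a,0),(0,y),(0,z)\bigr)$ produces (J5) and (J6), and $\bigl((0,x),(b,0),(0,z)\bigr)$ is again redundant for the same reason; $\bigl((0,x),(0,y),(c,0)\bigr)$ produces (J7) and (J8); and finally $\bigl((0,x),(0,y),(0,z)\bigr)$ produces (J9) and (J10). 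Conversely, if all ten axioms hold, the same multilinear decomposition reassembles the full Jacobi compatibility on arbitrary triples.

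The main obstacle is not conceptual but accounting. In every case one must carefully track the correction summands involving $[1_A, -]$, $(-) \rightharpoonup 1_A$ and $(-) \leftharpoonup 1_A$ that are the Jacobi-specific remnants of the weakened Leibniz rule \equref{jac1}; these are precisely the terms that distinguish axioms (J1)--(J10) from their Poisson counterparts, and their bookkeeping through the expansion of both sides is what makes the verification lengthy rather than subtle.
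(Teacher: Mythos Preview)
Your proposal is correct and follows essentially the same route as the paper's proof: reduce (J0) to the cited associative and Lie results, then check the Jacobi compatibility on the eight generator triples, matching each non-redundant case to a pair of axioms (J1)--(J10) and dispensing with the two redundant cases via the $a\leftrightarrow b$ symmetry of \equref{jac1}. The paper's own proof is organized identically (with slightly different variable labels for the generic $V$-element) and makes the same remark about the computations being long but straightforward.
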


\begin{proof}
We have already noticed in Preliminaries that $(A \ltimes \, V, \,
\bullet)$ is a commutative algebra with unit $(1_A, 0)$ if and
only if $\bigl(\triangleleft, \, \triangleright, \, f, \, \cdot)$
is an algebra extending system of the algebra $A$ through $V$ and
$(A \ltimes \, V, \, [-, \, - ])$ is a Lie algebra if and only
$\bigl(\leftharpoonup, \, \rightharpoonup, \, \theta, \, \{-, \, -
\} \bigl)$ is a Lie extending system of the Lie algebra $A$
through $V$. These are the assumptions from (J0) which from now on
we assume to be fulfilled. Then, $(A \ltimes \, V, \, \bullet, \,
[-, \, - ])$ is a Jacobi algebra if and only if the following
compatibility holds for any $a$, $b$, $c\in A$ and $x$, $y$, $z\in
V$
\begin{eqnarray*}
\left[ (a, x) \bullet (b, y), \, (c, z)  \right] &=& (a, x)
\bullet \left[ (b, y), \, (c, z) \right] + \left [ (a,
x), \, (c, z) \right] \bullet (b, y) - \\
&& - \, (a, x) \bullet (b, y) \bullet \bigl( \left[1_A, \, c
\right] - z \rightharpoonup 1_A, \, -z \leftharpoonup 1_A \bigl)
\eqlabel{005}
\end{eqnarray*}
If we denote the last equation by (J), the proof relies on a
detailed analysis of this identity. Since in $A \times V$ we have
$(a, x) = (a, 0) + (0, x)$ it follows that (J) holds if and only
if it holds for all generators of $A \times V$, i.e. for the set
$\{(a, \, 0) ~|~ a \in A\} \cup \{(0, \, x) ~|~ x \in V\}$.
However, since the computations are rather long but
straightforward we will only indicate the main steps of the proof,
the details being available upon request. First, we notice that
(J) holds for the triple $(a, 0)$, $(b, 0)$, $(c, 0)$ since $A$ is
a Jacobi algebra. The left hand side of (J) evaluated in $(a, 0)$,
$(b, 0)$, $(0, x)$ is equal to $\bigl( - x \rightharpoonup (ab),
\, - x \leftharpoonup (ab) \bigl)$ while the right hand side of
(J) evaluated in the same triple comes down to:
\begin{eqnarray*}
&& - \bigl( (x \rightharpoonup a) \, b + (x \leftharpoonup a)
\triangleright b, \,\, (x \leftharpoonup a) \triangleleft b \bigl)
\, - \, \bigl( a\, (x \rightharpoonup b) + (x\leftharpoonup b)
\triangleright a, \,\, (x \leftharpoonup b) \triangleleft a \bigl)
\, + \\
&& + \, \bigl( ab \, (x\rightharpoonup 1_A) + (x \leftharpoonup
1_A) \triangleright (ab), \,\, (x \leftharpoonup 1_A)
\triangleleft (ab) \bigl)
\end{eqnarray*}
We obtain that (J) holds for the triple $(a, 0)$, $(b, 0)$, $(0,
x)$ if and only if (J1) and (J2) hold. Similar computations show
the following: (J) holds for the triple $(a, 0)$, $(0, x)$, $(b,
0)$ if and only if (J3) and (J4) hold; (J) holds for the triple
$(a, 0)$, $(0, x)$, $(0, y)$ if and only if (J5) and (J6) hold;
(J) holds for the triple $(0, x)$, $(0, y)$, $(a, 0)$ if and only
if (J7) and (J8) hold and finally, (J) holds for the triple $(0,
x)$, $(0, y)$, $(0, z)$ if and only if (J9) and (J10) hold.
Moreover, since $A$ is commutative, we observe that the Jacobi
compatibility \equref{jac1} holds for the triple $(a, b, c)$ if
and only if it holds for the triple $(b, a, c)$. Based on this
remark we obtain that (J) holds for the triple $(0, x)$, $(a, 0)$,
$(b, 0)$ whenever it holds for $(a, 0)$, $(0, x)$, $(b, 0)$ and
(J) holds for the triple $(0, x)$, $(a, 0)$, $(0, y)$ whenever it
holds for $(a, 0)$, $(0, x)$, $(0, y)$. The proof is now finished.
\end{proof}

From now on a Jacobi extending structure of a Jacobi algebra $A$
through a vector space $V$ will be viewed as a system $\Upsilon
(A, V) = \bigl(\triangleleft, \, \triangleright, \, f, \, \cdot,
\, \leftharpoonup, \, \rightharpoonup, \, \theta, \, \{-, \, - \}
\bigl)$ satisfying the compatibility conditions (J0)-(J10). We
denote by ${\mathcal J} {\mathcal E} (A, V)$ the set of all Jacobi
algebra extending structures of $A$ through $V$. \thref{1} takes a
simplified form at the level of Poisson algebras:

\begin{corollary}\colabel{1}
Let $A = (A, \, m_A, \, [-, \, -])$ be a unital Poisson algebra,
$V$ a vector space and $\Upsilon (A, V) = \bigl(\triangleleft, \,
\triangleright, \, f, \, \cdot, \, \leftharpoonup, \,
\rightharpoonup, \, \theta, \, \{-, \, - \} \bigl)$ an extending
datum of $A$ through $V$. Then $A \ltimes V = (A\ltimes V,
\bullet, [-, \, -])$ is a Poisson algebra with $(1_A, \, 0_V)$ as
a unit if and only if the following compatibilities hold for any
$a$, $b \in A$, $x$, $y$, $z \in V$:

$(P0)$ \, $\bigl(\triangleleft, \, \triangleright, \, f, \,
\cdot)$ is an algebra extending system of the associative algebra
$A$ trough $V$ and $\bigl(\leftharpoonup, \, \rightharpoonup, \,
\theta, \, \{-, \, - \} \bigl)$ is a Lie extending system of the
Lie algebra $A$ trough $V$

$(P1)$ \, $x \rightharpoonup (ab) = (x \rightharpoonup a) \, b +
(x \leftharpoonup a) \triangleright b + a \, (x \rightharpoonup b)
+ (x \leftharpoonup b) \triangleright a $

$(P2)$ \, $x \leftharpoonup (ab) = (x \leftharpoonup a)
\triangleleft b + (x \leftharpoonup b) \triangleleft a $

$(P3)$ \, $x \, \triangleright [a, \, b] = [x \triangleright a, \,
b] + (x \triangleleft a) \rightharpoonup b - a \, (x
\rightharpoonup b) - (x \leftharpoonup b) \triangleright a $

$(P4)$ \, $x  \triangleleft [a, \, b] = (x \triangleleft a)
\leftharpoonup b - (x  \leftharpoonup b) \triangleleft a$

$(P5)$ \, $ \{x, \, y\} \triangleright a  = \theta (x
\triangleleft a, \, y) - a \, \theta (x, \, y) + f (y
\leftharpoonup a, \, x) -  y \rightharpoonup (x \triangleright a)
+ x \triangleright (y \rightharpoonup a)$

$(P6)$ \, $ \{x, \, y\} \triangleleft a = \{x \triangleleft a, \,
y \} - y \leftharpoonup (x \triangleright a) + x \triangleleft (y
\rightharpoonup a) + (y \leftharpoonup a) \cdot x $

$(P7)$ \, $ (x \cdot y) \rightharpoonup a = x \triangleright (y
\rightharpoonup a) + y \triangleright (x \rightharpoonup a) + f (x
\leftharpoonup a, \, y) + f (x, \, y \leftharpoonup a) - [f(x, \,
y), \, a]$

$(P8)$ \, $ (x \cdot y) \leftharpoonup  a =  x \cdot (y
\leftharpoonup a) + (x \leftharpoonup a) \cdot y + x \triangleleft
(y \rightharpoonup a) + y  \triangleleft (x \rightharpoonup a)$

$(P9)$ \, $ \theta (x \cdot y, \, z) = x \triangleright \theta (y,
\, z) + y \triangleright \theta (x, \, z) + z \rightharpoonup f(x,
\, y) + f \bigl(\{x, \, z\}, \, y \bigl) + f \bigl(x, \, \{y, \,
z\}\bigl)$

$(P10)$ $ \{x\cdot y, \, z \} = x \cdot \{y, \, z \} + \{x, \, z\}
\cdot y + z \leftharpoonup f(x, \, y) + x \triangleleft \theta (y,
\, z) + y \triangleleft \theta (x, \, z)$
\end{corollary}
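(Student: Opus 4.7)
The plan is to deduce the corollary directly from \thref{1} by specializing to the Poisson setting. The crucial observation is that a unital Poisson algebra is precisely a Jacobi algebra in which $[1, \, -] = 0$: since $A$ is already a unital Poisson algebra, $[1_A, \, a] = 0$ for all $a\in A$, and what needs to be characterized is when $A \ltimes V$ is a Jacobi algebra whose canonical bracket additionally satisfies $\bigl[(1_A, \, 0_V), \, (b, \, y)\bigr] = 0$ for every $(b, \, y)\in A \ltimes V$.

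A direct computation using the bracket formula \equref{brackunifJ} together with $[1_A, \, b] = 0$ yields $\bigl[(1_A, \, 0_V), \, (b, \, y)\bigr] = \bigl(- y \rightharpoonup 1_A, \, - y \leftharpoonup 1_A \bigr)$, where I use that the zero vector of $V$ kills every bilinear map in its argument. Hence the extra Poisson constraint on $A \ltimes V$ collapses to the two identities $y \rightharpoonup 1_A = 0$ and $y \leftharpoonup 1_A = 0$ for all $y\in V$. I then plan to substitute the three vanishing rules $[1_A, \, a] = 0$, $y \rightharpoonup 1_A = 0$ and $y \leftharpoonup 1_A = 0$ into each of (J1)--(J10) of \thref{1}: a straightforward inspection shows that precisely the correction terms carrying a factor of $[1_A, \, -]$, $\rightharpoonup 1_A$ or $\leftharpoonup 1_A$ disappear, leaving exactly (P1)--(P10), while (J0) is carried over verbatim as (P0).

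For the converse direction, the key point is that the two identities $y \rightharpoonup 1_A = 0$ and $y \leftharpoonup 1_A = 0$ are already forced by (P1) and (P2), so they need not be included as additional hypotheses in the corollary. Indeed, setting $a = 1_A$ in (P2) and using that $(V, \triangleleft)$ is a unital right $A$-module gives $(x \leftharpoonup 1_A) \triangleleft b = 0$ for all $b\in A$, and then $b = 1_A$ yields $x \leftharpoonup 1_A = 0$; using this together with the axiom $x \triangleright 1_A = 0$ from (A1), the choice $a = 1_A$ in (P1) simplifies to $(x \rightharpoonup 1_A)\, b = 0$ for all $b\in A$, whence $x \rightharpoonup 1_A = 0$. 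This closes the equivalence between (P0)--(P10) and the conjunction of (J0)--(J10) with $\bigl[(1_A, \, 0_V), \, -\bigr] = 0$, which by \thref{1} is exactly the condition that $A \ltimes V$ be a unital Poisson algebra.

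The main obstacle is the careful bookkeeping in matching every correction term dropped from (J1)--(J10) against the three vanishing rules with no residual term left over, together with the small but essential reverse extraction of $y \rightharpoonup 1_A = 0$ and $y \leftharpoonup 1_A = 0$ from (P1) and (P2); this last step is what allows the corollary to state only (P0)--(P10) as its full list of conditions.
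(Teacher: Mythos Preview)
Your proposal is correct and follows exactly the route the paper intends: the corollary is stated immediately after \thref{1} with no separate proof, merely as the ``simplified form at the level of Poisson algebras,'' and your argument supplies precisely the missing details of that specialization. In particular, your observation that $y \leftharpoonup 1_A = 0$ and $y \rightharpoonup 1_A = 0$ are recoverable from (P2) and (P1) respectively (so that they need not be listed separately) is the one nontrivial point in making the equivalence genuinely two-sided, and you handle it correctly.
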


An extending datum $\Upsilon (A, V) = \bigl(\triangleleft, \,
\triangleright, \, f, \, \cdot, \, \leftharpoonup, \,
\rightharpoonup, \, \theta, \, \{-, \, - \} \bigl)$ of a unital
Poisson algebra $A$ through a vector space $V$ satisfying the
axioms (P0)-(P10) is called a \emph{Poisson extending structure of
$A$ through $V$} and we denote by ${\mathcal P} {\mathcal E} (A,
V)$ the set of all Poisson extending structures of $A$ through
$V$.

Let $\Upsilon (A, V) = \bigl(\triangleleft, \, \triangleright, \,
f, \, \cdot, \, \leftharpoonup, \, \rightharpoonup, \, \theta, \,
\{-, \, - \} \bigl) \in {\mathcal J} {\mathcal E} (A, V)$ be a
Jacobi extending structure of a Jacobi algebra $A$ through a
vector space $V$. Then $A$ is a Jacobi subalgebra of the unified
product $A \ltimes V$ through the identification $A \cong i_A (A)
= A \times \{0\}$, where $i_A: A \to A \ltimes V$, $i_A (a) = (a,
\, 0)$ is the canonical injection. Conversely, the following
result provides the answer to the description part of the
ES-problem:

\begin{proposition}\prlabel{classif}
Let $A$ be a Jacobi algebra, $E$ a vector space containing $A$ as
a subspace and $(\ast_E, \, [-, \, -]_E)$ a Jacobi algebra
structure on $E$ such that $A$ is a subalgebra of $(E, \, \ast_E,
\, [-, \, -]_E)$. Then there exists a Jacobi extending structure
$\Upsilon (A, V) = \bigl(\triangleleft, \, \triangleright, \, f,
\, \cdot, \, \leftharpoonup, \, \rightharpoonup, \, \theta, \,
\{-, \, - \} \bigl)$ of $A$ through a subspace $V$ of $E$ and an
isomorphism of Jacobi algebras $(E, \ast_E, [-, \, -]_E ) \cong A
\ltimes V$ that stabilizes $A$ and co-stabilizes $V$.
\end{proposition}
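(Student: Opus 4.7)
The plan is to fix a linear complement of $A$ in $E$ and read off the eight bilinear maps of a Jacobi extending datum directly from the given structure on $E$, then show that the obvious identification $A \times V \cong E$ is a morphism of Jacobi algebras that stabilizes $A$ and co-stabilizes $V$; by \thref{1} this forces the datum to satisfy (J0)--(J10).

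Concretely, choose a subspace $V \subseteq E$ such that $E = A \oplus V$, and let $\pi_A : E \to A$ and $\pi_V : E \to V$ denote the associated projections. For any $a \in A$ and $x$, $y \in V$ define
\begin{align*}
x \triangleright a &:= \pi_A(x \ast_E a), & x \triangleleft a &:= \pi_V(x \ast_E a), \\
f(x,y) &:= \pi_A(x \ast_E y), & x \cdot y &:= \pi_V(x \ast_E y), \\
x \rightharpoonup a &:= \pi_A([x,a]_E), & x \leftharpoonup a &:= \pi_V([x,a]_E), \\
\theta(x,y) &:= \pi_A([x,y]_E), & \{x,y\} &:= \pi_V([x,y]_E).
\end{align*}
Let $\varphi : A \times V \to E$, $\varphi(a,x) := a + x$, be the canonical linear isomorphism.

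Next I would compute $\varphi((a,x)) \ast_E \varphi((b,y))$ and $[\varphi((a,x)),\, \varphi((b,y))]_E$ by expanding via bilinearity and using that $A$ is a Jacobi subalgebra (so $a \ast_E b = ab$ and $[a,b]_E = [a,b]$), commutativity of $\ast_E$, and antisymmetry of $[-,-]_E$. Decomposing each of $x \ast_E b$, $x \ast_E y$, $[x,b]_E$, $[x,y]_E$ into its $A$- and $V$-components through $\pi_A + \pi_V = \mathrm{Id}_E$ and collecting terms shows that the transported operations on $A \times V$ agree term-by-term with the formulas \equref{multunifJ} and \equref{brackunifJ} applied to the datum $\Upsilon(A,V) = \bigl(\triangleleft, \triangleright, f, \cdot, \leftharpoonup, \rightharpoonup, \theta, \{-,-\}\bigl)$ just defined. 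Hence $\varphi$ is an isomorphism of Jacobi algebras from $A \ltimes V$ (with the structure given by these formulas) onto $(E, \ast_E, [-,-]_E)$.

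Because $(E, \ast_E, [-,-]_E)$ is a Jacobi algebra and the transport via $\varphi^{-1}$ endows $A \times V$ with a Jacobi algebra structure of the exact form of \equref{multunifJ}--\equref{brackunifJ}, \thref{1} forces $\Upsilon(A,V)$ to satisfy axioms (J0)--(J10); in particular, $\Upsilon(A,V)$ is a Jacobi extending structure and $A \ltimes V$ is a genuine unified product. Finally, $\varphi$ stabilizes $A$ since $\varphi(a,0) = a$ for all $a \in A$, and it co-stabilizes $V$ since $\pi \circ \varphi(a,x) = \pi(a+x) = x$, where $\pi: E \to V$ is the projection along $A$; this yields the commutative diagram \equref{diagrama1} with $\mathrm{Id}$ on the outer columns.

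The only nontrivial point in the argument is the bookkeeping for the transport: one must verify, using only commutativity of $\ast_E$ together with the definitions of the eight maps, that the $A$-component and $V$-component of $(a+x) \ast_E (b+y)$ and of $[a+x,\, b+y]_E$ reassemble exactly into the right-hand sides of \equref{multunifJ} and \equref{brackunifJ}. Once this identification is in place, invoking \thref{1} rather than checking (J0)--(J10) by hand avoids any genuinely hard computation.
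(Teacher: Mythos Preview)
Your proposal is correct and follows essentially the same approach as the paper: choose a linear complement $V$ of $A$ in $E$, define the eight bilinear maps by projecting the products and brackets of $E$ onto the $A$- and $V$-components, and verify that $\varphi(a,x) = a+x$ is a Jacobi algebra isomorphism stabilizing $A$ and co-stabilizing $V$. The only cosmetic difference is that the paper phrases the choice of complement via a linear retraction $p: E \to A$ with $V := \Ker(p)$ and appeals to the analogous results for associative and Lie algebras, whereas you invoke the ``only if'' direction of \thref{1} to conclude (J0)--(J10) hold; your route is arguably more self-contained.
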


\begin{proof}
Since $k$ is a field, there exists a linear map $p: E \to A$ such
that $p(a) = a$, for all $a \in A$. Then $V := \rm{Ker}(p)$ is a
complement of $A$ in $E$. Using the retraction $p$, we define the
extending datum $\Upsilon (A, V) = \bigl(\triangleleft, \,
\triangleright, \, f, \, \cdot, \, \leftharpoonup, \,
\rightharpoonup, \, \theta, \, \{-, \, - \} \bigl)$ of $A$ through
$V$ by the following formulas for any $a\in A$ and $x$, $y\in V$:
\begin{eqnarray*}
x \triangleright a &:=& p (x \ast_E a), \qquad \,\,\, x \triangleleft a
:= x\ast_E a  - p (x\ast_E a) \\
f(x, y) &:=& p (x \ast_E y), \qquad \,\,\,\,\, x \cdot y := x \ast_E y - p (x \ast_E y) \\
x \rightharpoonup a &:=& p \bigl([x, \, a]_E \bigl), \quad
\,\,\,\,\,
x \leftharpoonup a := [x, \, a]_E - p \bigl([x, \, a]_E\bigl)\\
\theta (x, y) &:=& p \bigl([x, \, y]_E \bigl), \quad \,\,\,\,\, \{x, \,
y\} := [x, \, y]_E - p \bigl([x, \, y]_E\bigl)
\end{eqnarray*}
Then by arguments similar to those used for Lie algebras in
\cite[Theorem 2.4]{am-2013} and associative algebras in
\cite[Theorem 2.4]{am-2013d} we can prove that $\Upsilon (A, V) =
\bigl(\triangleleft, \, \triangleright, \, f, \, \cdot, \,
\leftharpoonup, \, \rightharpoonup, \, \theta, \, \{-, \, - \}
\bigl)$ is a Jacobi extending structure of $A$ through $V$ and the
linear map $\varphi: A \ltimes V \to (E, \ast_E, [-, \, -]_E )$,
$\varphi(a, x) := a + x$, is an isomorphism of Jacobi algebras
that stabilizes $A$ and co-stabilizes $V$, i.e. the following
diagram is commutative:
\begin{eqnarray*} \eqlabel{diagrama}
\xymatrix {& A \ar[r]^{i} \ar[d]_{Id} & {A \ltimes V}
\ar[r]^{q} \ar[d]^{\varphi} & V \ar[d]^{Id}\\
& A \ar[r]^{i} & {E}\ar[r]^{\pi } & V}
\end{eqnarray*}
where $q: A \ltimes V \to V$, $q (a, x) := x$ is the canonical
projection.
\end{proof}

\prref{classif} reduces the classification of all Jacobi algebra
structures on $E$ that contain $A$ as a subalgebra to the
classification of all unified products $A \ltimes V$, associated
to all Jacobi extending structures $\Upsilon (A, V) =
\bigl(\triangleleft, \, \triangleright, \, f, \, \cdot, \,
\leftharpoonup, \, \rightharpoonup, \, \theta, \, \{-, \, - \}
\bigl)$, for a fixed complement $V$ of $A$ in $E$. First we need
the following technical result:

\begin{proposition} \prlabel{echiaabb}
Let $A$ be a Jacobi algebra, $\Upsilon (A, V) =
\bigl(\triangleleft, \, \triangleright, \, f, \, \cdot, \,
\leftharpoonup, \, \rightharpoonup, \, \theta, \, \{-, \, - \}
\bigl)$ and $\Upsilon' (A, V) = \bigl(\triangleleft', \,
\triangleright', \, f', \, \cdot', \, \leftharpoonup', \,
\rightharpoonup', \, \theta', \, \{-, \, - \}' \bigl)$ two Jacobi
extending structures of $A$ through $V$. Let $A\ltimes V$ and $A
\ltimes' V$ be the unified products associated to the Jacobi
extending structures $\Upsilon (A, V)$ and respectively $\Upsilon'
(A, V)$. The following are equivalent:

$(1)$ There exists $\psi: A \ltimes V \to A \ltimes ' V$ a
morphism of Jacobi algebras which stabilizes $A$ and co-stabilizes
$V$;

$(2)$ $\triangleleft' = \triangleleft$, \, $\leftharpoonup' \, =
\, \leftharpoonup$ and there exists a linear map $r: V \to A$ such
that $\triangleright'$, $f'$, \, $\cdot'$, \, $\rightharpoonup'$,
\,  $\theta'$ and $\{-, \, - \}'$ are implemented by $r$ via the
following formulas for any $a \in A$, $x$, $y \in V$:
\begin{eqnarray*}
x \triangleright ' a &=& x \triangleright a + r(x \triangleleft a) - r(x) \, a, \qquad \,\, \quad x \cdot
' y = x \cdot y  - x \triangleleft r(y) - y \triangleleft r(x)\\
f ' (x, y) &=& f(x,\, y)  + r(x \cdot y) + r(x) r(y) - x \triangleright r(y)
- r\bigl(x \triangleleft r(y)\bigl) - y \triangleright r(x) - r\bigl(y \triangleleft r(x) \bigl)  \\
x \rightharpoonup ' a &=& x \rightharpoonup a + r\bigl(x
\leftharpoonup a \bigl) - \, [r(x), \, a], \qquad \{x, y\}' = \{x,
y\} - x \leftharpoonup r (y) + y \leftharpoonup r(x) \\
\theta'(x, y) &=& \theta (x, y) + r \bigl(\{x, \, y\}\bigl) +
\, [r(x), \, r(y)] + y \rightharpoonup r (x) - x \rightharpoonup r
(y) + \\
&& + \, r\bigl(y \leftharpoonup r (x)\bigl) \, - \, r\bigl(x
\leftharpoonup r (y) \bigl)
\end{eqnarray*}
\end{proposition}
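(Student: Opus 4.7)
The plan is to follow the standard strategy used for similar ES-problem classification results (as in \cite[Theorem 2.4]{am-2013} and \cite[Theorem 2.4]{am-2013d}): first identify the form of any admissible $\psi$ forced by the stabilizing/co-stabilizing condition, then translate the multiplicative and Lie-bracket compatibility into identities on the eight structure maps, organized by which pair of generators we test them on.

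For $(1)\Rightarrow(2)$, I will first argue that any $k$-linear map $\psi : A \ltimes V \to A \ltimes' V$ making the diagram \equref{diagrama1} commute must have the form $\psi(a,x) = (a + r(x),\, x)$ for a uniquely determined linear map $r : V \to A$, defined by $r(x) := p_A \bigl( \psi(0,x) \bigl)$ where $p_A : A \ltimes' V \to A$ is the projection. Indeed, $\psi(a,0) = (a,0)$ by the stabilizing condition, and the second component of $\psi(0,x)$ equals $x$ by the co-stabilizing condition, so $\psi(a,x) = \psi(a,0) + \psi(0,x) = (a + r(x),\,x)$. Next, I will impose that such a $\psi$ is a morphism of Jacobi algebras, i.e. a morphism for both $\bullet$ and $[-,-]$. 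Since $(A \ltimes V)$ is generated as a vector space by $\{(a,0)\} \cup \{(0,x)\}$, it suffices to test the compatibilities on these generators.

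For the multiplicative compatibility, the pair $(a,0),(b,0)$ yields no condition (it reduces to the associative multiplication in $A$). Testing $\psi$ on $(a,0)\bullet(0,x)$ and comparing with $\psi(a,0)\bullet' \psi(0,x) = (a,0)\bullet'(r(x),x)$, the second components force $\triangleleft' = \triangleleft$, and the first components yield the formula for $\triangleright'$. Testing on $(0,x)\bullet(0,y)$ and comparing with $(r(x),x)\bullet'(r(y),y)$, the second component yields the formula for $\cdot'$, and plugging the already-found expression for $\triangleright'$ into the first component gives the formula for $f'$. A completely analogous computation on the Lie bracket side, using the pairs $[(a,0),(0,x)]$ and $[(0,x),(0,y)]$, produces in succession $\leftharpoonup' = \leftharpoonup$, the formula for $\rightharpoonup'$, the formula for $\{-,-\}'$, and finally, after substituting the expression for $\rightharpoonup'$, the formula for $\theta'$. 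The only mild subtlety is keeping track of the antisymmetry of $[-,-]$ when computing the $\theta'$ contribution from $[r(x),r(y)]$.

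The converse $(2)\Rightarrow(1)$ is then obtained by reading the calculations backwards: given $r : V \to A$ and defining $\triangleright'$, $f'$, $\cdot'$, $\rightharpoonup'$, $\theta'$, $\{-,-\}'$ by the displayed formulas (together with $\triangleleft' = \triangleleft$ and $\leftharpoonup' = \leftharpoonup$), the map $\psi(a,x) := (a+r(x),\,x)$ is automatically a morphism of Jacobi algebras that stabilizes $A$ and co-stabilizes $V$, since all the compatibilities were designed by forcing exactly this condition. The main obstacle is purely bookkeeping: the eight structure maps interact in a non-trivial way (for instance, the expression for $f'$ must absorb the $r(x)r(y)$ term coming from $x \triangleright' r(y)$), so careful separation of the first-component and second-component identities and systematic use of the fact that $\triangleleft' = \triangleleft$ and $\leftharpoonup' = \leftharpoonup$ were already determined at earlier stages is essential. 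No identity from (J0)--(J10) is needed here; the extending-structure axioms are used only to guarantee that both sides are genuine Jacobi algebras, not in deriving the transfer formulas themselves.
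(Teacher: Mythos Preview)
The proposal is correct and follows essentially the same approach as the paper: both identify that any stabilizing/co-stabilizing $\psi$ has the form $\psi(a,x)=(a+r(x),x)$ for a unique linear $r:V\to A$, and then check the algebra- and Lie-morphism conditions on the generating pairs $(a,0),(0,x)$ and $(0,x),(0,y)$ to read off the transfer formulas in the stated order. Your remark that no axiom from (J0)--(J10) is used in deriving the transfer formulas is correct and makes explicit a point that the paper leaves implicit.
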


\begin{proof}
There exists a bijection between the set of all linear maps $\psi:
A \times V \to A \times V$ which stabilize $A$ and co-stabilize
$V$ and the set of all linear map $r : V \to A$ given as follows:
$\psi \mapsto r_{\psi}$, where $r_{\psi} (x) := \psi (0, x)$
respectively $r \mapsto \psi_r$, where $\psi_r (a, \, x) := (a +
r(x), \, x)$, for all $a\in A$ and $x\in V$. We denote by $\psi_r
= \psi$, the linear map associated to $r: V \to A$. We prove that
$\psi = \psi_r : A \ltimes V \to A \ltimes ' V$ is a morphism of
Jacobi algebras if and only if the compatibility conditions from
$(2)$ hold. Indeed, first we can see that $\psi \bigl( (a, 0)
\bullet (0, x) \bigl) = \psi (a, 0) \bullet' \, \psi (0, x)$ if
and only if $\triangleleft' = \triangleleft$ and $x \triangleright
' a = x \triangleright a + r(x \triangleleft a)$. Taking these two
compatibilities into account, we can easily see that $\psi \bigl(
(0, x) \bullet (0, y) \bigl) = \psi (0, x) \bullet' \psi (0, y)$
if and only if $x \cdot ' y = x \cdot y  - x \triangleleft r(y) -
y \triangleleft r(x)$ and $f ' (x, y) = f(x,\, y)  + r(x \cdot y)
+ r(x) r(y) - x \triangleright r(y) - r\bigl(x \triangleleft
r(y)\bigl) - y \triangleright r(x) - r\bigl(y \triangleleft r(x)
\bigl)$, for all $x$, $y\in V$. This shows that $\psi = \psi_r : A
\ltimes V \to A \ltimes ' V$ is a morphism of associative algebras
if and only if $\triangleleft' = \triangleleft$ and the first
three compatibility conditions of $(2)$ hold. In a similar
fashion, we can prove that $\psi = \psi_r : A \ltimes V \to A
\ltimes ' V$ is a morphism of Lie algebras if and only if
$\leftharpoonup' \, = \, \leftharpoonup$ and the last three
compatibility conditions of $(2)$ hold.
\end{proof}

The Jacobi algebra morphism $\psi_r : A \times V \to A\times V$
defined in the proof of \prref{echiaabb} is bijective. This allows
us to introduce the following equivalence relation :

\begin{definition}\delabel{echiaabbc}
Let $A$ be a Jacobi algebra and $V$ a vector space. Two Jacobi
extending structures $\Upsilon (A, V) = \bigl(\triangleleft, \,
\triangleright, \, f, \, \cdot, \, \leftharpoonup, \,
\rightharpoonup, \, \theta, \, \{-, \, - \} \bigl)$ and $\Upsilon'
(A, V) = \bigl(\triangleleft', \, \triangleright', \, f', \,
\cdot', \, \leftharpoonup', \, \rightharpoonup', \, \theta', \,
\{-, \, - \}' \bigl)$ are called \emph{cohomologous}, and we
denote this by $\Upsilon (A, V) \approx \Upsilon'(A, V)$, if
$\triangleleft' = \triangleleft$, \, $\leftharpoonup' \, = \,
\leftharpoonup$ and there is a linear map $r: V \to A$ such that
$\triangleright'$, $f'$, \, $\cdot'$, \, $\rightharpoonup'$, \,
$\theta'$ and $\{-, \, - \}'$ are implemented by $r$ via the
formulas given in $(2)$ of \prref{echiaabb}.
\end{definition}

The theoretical answer to the ES-problem for Jacobi algebras now
follows:

\begin{theorem}\thlabel{main1}
Let $A$ be a Jacobi algebra, $E$ a vector space which contains $A$
as a subspace and $V$ a complement of $A$ in $E$. Then $\approx $
is an equivalence relation on the set ${\mathcal J} {\mathcal E}
(A, V)$ of all Jacobi algebra extending structures of $A$ through
$V$. If we denote by ${\mathcal J}{\mathcal H}^{2} \, (V, \, A) :=
{\mathcal J} {\mathcal E} (A, V)/ \approx $, then the
map\footnote{ $\overline{\bigl(\triangleleft, \, \triangleright,
\, f, \, \cdot, \, \leftharpoonup, \, \rightharpoonup, \, \theta,
\, \{-, \, - \} \bigl)}$ denotes the equivalence class of
$\bigl(\triangleleft, \, \triangleright, \, f, \, \cdot, \,
\leftharpoonup, \, \rightharpoonup, \, \theta, \, \{-, \, - \}
\bigl)$ via $\approx$.}
$$
{\mathcal J}{\mathcal H}^{2} \, (V, \, A) \to {\rm
Extd}_{{\mathcal J}} \, (E, \, A), \qquad
\overline{\bigl(\triangleleft, \, \triangleright, \, f, \, \cdot,
\, \leftharpoonup, \, \rightharpoonup, \, \theta, \, \{-, \, - \}
\bigl)} \, \longmapsto \, A \ltimes V
$$
is bijective, where $A \ltimes V$ is the unified product
associated to $A$ and $\bigl(\triangleleft, \, \triangleright, \,
f, \, \cdot, \, \leftharpoonup, \, \rightharpoonup, \, \theta, \,
\{-, \, - \} \bigl)$.
\end{theorem}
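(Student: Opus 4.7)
The plan is to use Propositions \ref{pr:classif} and \ref{pr:echiaabb} as the two main ingredients: the former gives surjectivity while the latter essentially packages everything needed for injectivity and for the fact that $\approx$ is an equivalence relation. Once we transport the unified product $A \ltimes V$ on $A \times V$ to a Jacobi structure on $E$ via the canonical identification $A \times V \to E$, $(a,x) \mapsto a + x$, the map in the statement becomes well-defined provided cohomologous extending structures are sent to the same class in $\mathrm{Extd}_{\mathcal{J}}(E,A)$.

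First I would verify that $\approx$ is an equivalence relation on $\mathcal{JE}(A,V)$. The most economical way is to observe, from the proof of Proposition \ref{pr:echiaabb}, that the linear maps on $A \times V$ stabilizing $A$ and co-stabilizing $V$ are exactly those of the form $\psi_r(a,x) = (a + r(x), x)$ for some $r \in \mathrm{Hom}_k(V,A)$, and they satisfy $\psi_r \circ \psi_s = \psi_{r+s}$, $\psi_0 = \mathrm{Id}$, and $\psi_r^{-1} = \psi_{-r}$. Thus the set of such maps is a group isomorphic to $(\mathrm{Hom}_k(V,A),+)$, acting on $\mathcal{JE}(A,V)$ by transport of structure, and $\approx$ is precisely the orbit equivalence for this action. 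Reflexivity ($r = 0$), symmetry (use $-r$), and transitivity (use $r + s$) follow without any extra calculation; in particular each $\psi_r$ is automatically a bijection, so whenever (1) of Proposition \ref{pr:echiaabb} holds it holds with an isomorphism.

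Next I would show that the map $\Phi : \mathcal{JH}^2(V,A) \to \mathrm{Extd}_{\mathcal{J}}(E,A)$ is well-defined. If $\Upsilon \approx \Upsilon'$ with witness $r : V \to A$, then by Proposition \ref{pr:echiaabb} the map $\psi_r : A \ltimes V \to A \ltimes' V$ is an isomorphism of Jacobi algebras stabilizing $A$ and co-stabilizing $V$. Transporting both unified products to $E$ via $(a,x) \mapsto a + x$ produces two Jacobi structures on $E$ containing $A$ as a subalgebra, and $\psi_r$ becomes an isomorphism between them which still stabilizes $A$ and co-stabilizes $V$; hence the two transported structures are cohomologous in $\mathrm{Extd}_{\mathcal{J}}(E,A)$.

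Surjectivity of $\Phi$ is then immediate from Proposition \ref{pr:classif}: any Jacobi algebra structure $(\ast_E, [-,-]_E)$ on $E$ with $A$ as a subalgebra is isomorphic, through a map stabilizing $A$ and co-stabilizing $V$, to some unified product $A \ltimes V$ associated with a Jacobi extending structure of $A$ through $V$. For injectivity, suppose $\Phi(\overline{\Upsilon}) = \Phi(\overline{\Upsilon'})$. Then, after transport to $E$, the two unified products give Jacobi structures on $E$ connected by an isomorphism $\varphi$ that stabilizes $A$ and co-stabilizes $V$; conjugating $\varphi$ by the canonical identification $A \times V \cong E$ yields an isomorphism $A \ltimes V \to A \ltimes' V$ with the same property, so $\Upsilon \approx \Upsilon'$ by the implication (1)$\Rightarrow$(2) of Proposition \ref{pr:echiaabb}. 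Consequently $\Phi$ is a bijection. The main obstacle, the description of all morphisms in the groupoid $\mathcal{J}(A,E)$ at the level of extending structures, has already been settled by Proposition \ref{pr:echiaabb}; the remainder is bookkeeping around the transport of structure from $A \times V$ to $E$.
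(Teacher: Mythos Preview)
Your proof is correct and follows essentially the same approach as the paper: the equivalence relation and well-definedness/injectivity come from \prref{echiaabb}, and surjectivity comes from \prref{classif}. Your explicit identification of the maps $\psi_r$ with the additive group $\mathrm{Hom}_k(V,A)$ acting by transport of structure is a small but clean elaboration that makes the equivalence relation step more transparent than the paper's terse appeal to the existence of isomorphisms.
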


\begin{proof}
\prref{echiaabb} proves that $\Upsilon (A, V) \approx \Upsilon'(A,
V)$ if and only if there exists an isomorphism of Jacobi algebras
$\psi: A \ltimes V \to A \ltimes ' V$ which stabilizes $A$ and
co-stabilizes $V$. This shows that $\approx$ is an equivalence
relation on ${\mathcal J} {\mathcal E} (A, V)$. The last part
follows from this observation together with \thref{1} and
\prref{classif}.
\end{proof}

\begin{remark} \relabel{possversES} The Poisson version of
\thref{main1} has the following form. Let $A$ be a unital Poisson
algebra, $E$ a vector space which contains $A$ as a subspace, $V$
a complement of $A$ in $E$ and let ${\mathcal P} {\mathcal E} (A,
V)$ be the set of all Poisson extending structures of $A$ through
$V$ in the sense of \coref{1}. We denote by ${\mathcal P}{\mathcal
H}^{2} \, (V, \, A) := {\mathcal P} {\mathcal E} (A, V)/ \approx
$, where $\approx$ is the equivalence relation on ${\mathcal P}
{\mathcal E} (A, V)$ defined exactly as in \deref{echiaabbc} and
by ${\rm Extd}_{{\mathcal P}} \, (E, \, A)$ the set of all
equivalence classes of isomorphism of all Poisson algebra
structures which can be defined on $E$ that contain and stabilize
$A$ as a Poisson subalgebra and co-stabilize $V$. Then the map
$$
{\mathcal P}{\mathcal H}^{2} \, (V, \, A) \to {\rm
Extd}_{{\mathcal P}} \, (E, \, A), \qquad
\overline{\overline{\bigl(\triangleleft, \, \triangleright, \, f,
\, \cdot, \, \leftharpoonup, \, \rightharpoonup, \, \theta, \,
\{-, \, - \} \bigl)}} \, \longmapsto \, A \ltimes V
$$
is bijective.
\end{remark}

Computing the classifying object ${\mathcal J}{\mathcal H}^{2} \,
(V, \, A)$ for a given Jacobi algebra $A$ and a given vector space
$V$ is a highly nontrivial problem. However, the first step in
computing ${\mathcal J}{\mathcal H}^{2} \, (V, \, A)$ is suggested
by the first part of \deref{echiaabbc}: if two Jacobi extending
structures $\Upsilon (A, V) = \bigl(\triangleleft, \,
\triangleright, \, f, \, \cdot, \, \leftharpoonup, \,
\rightharpoonup, \, \theta, \, \{-, \, - \} \bigl)$ and $\Upsilon'
(A, V) = \bigl(\triangleleft', \, \triangleright', \, f', \,
\cdot', \, \leftharpoonup', \, \rightharpoonup', \, \theta', \,
\{-, \, - \}' \bigl)$ are cohomologous, then we must have
$\triangleleft' = \triangleleft$, $\leftharpoonup' \, = \,
\leftharpoonup$. Thus, in order to compute ${\mathcal J}{\mathcal
H}^{2} \, (V, \, A)$ we can fix the right $A$-module action
$\triangleleft$ and the right Lie $A$-module action
$\leftharpoonup$ such that $(V, \, \triangleleft, \,
\leftharpoonup)$ is a right Jacobi module as defined in
\deref{moduleJac} -- we observe that the compatibility conditions
\equref{Jmod1} and \equref{Jmod2} of \deref{moduleJac} coincide
with axioms (J2) and (J4) of \thref{1} defining the Jacobi
extending structures. Hence, we can decompose the object
${\mathcal J}{\mathcal H}^{2} \, (V, \, A)$ as follows. Let $A$ be
a Jacobi algebra, $V$ a vector space and let $(V, \,
\triangleleft, \, \leftharpoonup)$ be a fixed right Jacobi
$A$-module. Let ${\mathcal J} {\mathcal E}_{(\triangleleft, \,
\leftharpoonup)} (A, V)$ be the set of all \emph{$(\triangleleft,
\, \leftharpoonup)$-Jacobi extending stuctures} of $A$ through
$(V, \, \triangleleft, \, \leftharpoonup)$ which is the set of all
6-tuples $\bigl(\triangleright, \, f, \, \cdot, \,
\rightharpoonup, \, \theta, \, \{-, \, -\} \bigl)$ consisting of
six bilinear maps such that $\bigl(\triangleleft, \,
\triangleright, \, f, \, \cdot, \, \leftharpoonup, \,
\rightharpoonup, \, \theta, \, \{-, \, - \} \bigl)$ is a Jacobi
extending structure of $A$ through $V$. Two elements
$\bigl(\triangleright, \, f, \, \cdot, \, \rightharpoonup, \,
\theta, \, \{-, \, -\} \bigl)$ and $\bigl(\triangleright', \, f',
\, \cdot', \, \rightharpoonup', \, \theta', \, \{-, \, -\}'
\bigl)$ of ${\mathcal J} {\mathcal E}_{(\triangleleft, \,
\leftharpoonup)} (A, V)$ are $(\triangleleft, \,
\leftharpoonup)$-\emph{cohomologous} and we denote this by
$\bigl(\triangleright, \, f, \, \cdot, \, \rightharpoonup, \,
\theta, \, \{-, \, -\} \bigl) \, \approx_l \,
\bigl(\triangleright', \, f', \, \cdot', \, \rightharpoonup', \,
\theta', \, \{-, \, -\}' \bigl)$ if $\bigl(\triangleleft, \,
\triangleright, \, f, \, \cdot, \, \leftharpoonup, \,
\rightharpoonup, \, \theta, \, \{-, \, - \} \bigl) \approx
\bigl(\triangleleft, \, \triangleright', \, f', \, \cdot', \,
\leftharpoonup, \, \rightharpoonup', \, \theta', \, \{-, \, - \}'
\bigl)$. Then $\approx_l$ is an equivalence relation on ${\mathcal
J} {\mathcal E}_{(\triangleleft, \, \leftharpoonup)} (A, V)$ and
we denote by ${\mathcal J}{\mathcal H}^{2} \, ( (V, \,
\triangleleft, \, \leftharpoonup), \, A)$ the quotient set
${\mathcal J} {\mathcal E}_{(\triangleleft, \, \leftharpoonup)}
(A, V)/ \approx_l $. \thref{main1} and the above considerations
provide the following decomposition of ${\mathcal J}{\mathcal
H}^{2} \, (V, \, A)$:

\begin{corollary}\colabel{descompunere}
Let $A$ be a Jacobi algebra and $V$ a vector space. Then
\begin{equation} \eqlabel{descompunere}
{\mathcal J}{\mathcal H}^{2} \, (V, \, A) \, = \,
\sqcup_{(\triangleleft, \, \leftharpoonup)} \,\, {\mathcal
J}{\mathcal H}^{2} \, \bigl( (V, \, \triangleleft, \,
\leftharpoonup), \, A \bigl)
\end{equation}
where the coproduct in the right hand side is in the category of
sets over all possible right Jacobi $A$-module structures
$(\triangleleft, \, \leftharpoonup)$ on $V$.
\end{corollary}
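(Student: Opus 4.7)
The plan is to show that the classifying object $\mathcal{J}\mathcal{H}^{2}(V,A)$ is partitioned by the underlying right Jacobi $A$-module structure $(\triangleleft,\leftharpoonup)$ that any extending structure induces on $V$. This rests on two observations already present in the setup: first, axioms $(J2)$ and $(J4)$ of \thref{1} are literally the compatibilities \equref{Jmod1}--\equref{Jmod2} of \deref{moduleJac}; second, cohomologous extending structures share the same $\triangleleft$ and $\leftharpoonup$ by \deref{echiaabbc}.

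First, I would define the forgetful map
$$
\Phi : \mathcal{J}\mathcal{E}(A,V) \longrightarrow \bigl\{ (\triangleleft,\leftharpoonup) \text{ right Jacobi } A\text{-module structures on } V \bigl\}
$$
sending $\bigl(\triangleleft,\triangleright,f,\cdot,\leftharpoonup,\rightharpoonup,\theta,\{-,-\}\bigl)$ to the pair $(\triangleleft,\leftharpoonup)$. The map is well defined: $(V,\triangleleft)$ is a right $A$-module and $(V,\leftharpoonup)$ is a right Lie $A$-module by condition $(J0)$ (using the preliminaries on extending systems for associative and Lie algebras), while the two remaining compatibilities \equref{Jmod1}--\equref{Jmod2} are exactly the axioms $(J2)$ and $(J4)$. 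Therefore $\Phi$ yields a set-theoretic disjoint union decomposition
$$
\mathcal{J}\mathcal{E}(A,V) \;=\; \sqcup_{(\triangleleft,\leftharpoonup)} \; \mathcal{J}\mathcal{E}_{(\triangleleft,\leftharpoonup)}(A,V),
$$
where the index runs over all right Jacobi $A$-module structures on $V$ and the fiber over $(\triangleleft,\leftharpoonup)$ is precisely the set $\mathcal{J}\mathcal{E}_{(\triangleleft,\leftharpoonup)}(A,V)$ defined above the corollary.

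Next I would check that the equivalence relation $\approx$ on $\mathcal{J}\mathcal{E}(A,V)$ respects the fibers of $\Phi$, that is, $\Upsilon(A,V)\approx\Upsilon'(A,V)$ implies $\Phi(\Upsilon(A,V))=\Phi(\Upsilon'(A,V))$. This is immediate from \deref{echiaabbc}, which demands $\triangleleft'=\triangleleft$ and $\leftharpoonup'\,=\,\leftharpoonup$ as a prerequisite for being cohomologous. Conversely, when restricted to a fixed fiber $\mathcal{J}\mathcal{E}_{(\triangleleft,\leftharpoonup)}(A,V)$, the relation $\approx$ is by definition the relation $\approx_l$ introduced before the corollary (the $r$-implemented formulas in $(2)$ of \prref{echiaabb} only modify the remaining six components). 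Consequently $\approx$ descends to an equivalence relation on each fiber, and taking quotients commutes with disjoint unions.

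Combining these two steps and invoking \thref{main1} for the bijection $\mathcal{J}\mathcal{H}^{2}(V,A) = \mathcal{J}\mathcal{E}(A,V)/\approx$ yields the desired decomposition
$$
\mathcal{J}\mathcal{H}^{2}(V,A) \;=\; \sqcup_{(\triangleleft,\leftharpoonup)} \; \mathcal{J}\mathcal{E}_{(\triangleleft,\leftharpoonup)}(A,V)/\approx_l \;=\; \sqcup_{(\triangleleft,\leftharpoonup)} \; \mathcal{J}\mathcal{H}^{2}\bigl((V,\triangleleft,\leftharpoonup),\,A\bigl).
$$
There is no real obstacle here: the content of the corollary is entirely bookkeeping, and the only point worth emphasizing is the matching of axioms $(J2)$--$(J4)$ with \equref{Jmod1}--\equref{Jmod2}, which identifies the "module part" of a Jacobi extending structure as a genuine object of $\mathcal{J}\mathcal{M}_A^A$ and explains why these two pieces of data can be fixed first and the remaining six bilinear maps classified over them.
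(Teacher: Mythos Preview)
Your proposal is correct and follows exactly the approach of the paper: the corollary is stated there as an immediate consequence of \thref{main1} together with the observations (made in the paragraph preceding the corollary) that axioms $(J2)$ and $(J4)$ coincide with \equref{Jmod1}--\equref{Jmod2}, and that cohomologous extending structures share the same $(\triangleleft,\leftharpoonup)$ by \deref{echiaabbc}. Your write-up is in fact more explicit than the paper's, which simply says the result follows from \thref{main1} and ``the above considerations''.
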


The decomposition given by \equref{descompunere} is a very
important step in computing the classifying object ${\mathcal
J}{\mathcal H}^{2} \, (V, \, A)$. However, even computing every
object ${\mathcal J}{\mathcal H}^{2} \, \bigl( (V, \,
\triangleleft, \, \leftharpoonup), \, A \bigl)$, for a fixed right
Jacobi $A$-module $(V, \, \triangleleft, \, \leftharpoonup)$ is a
problem far from being trivial. However the decomposition is
important as some of the components in the right hand side of
\equref{descompunere} might be equal to the empty set as the
following example shows - several explicit examples of computing
${\mathcal J}{\mathcal H}^{2} \, (V, \, A)$ are provided in
\seref{exemple}.

\begin{example} \exlabel{cazvid}
We consider the trivial right Jacobi $A$-module structure on $V$,
that is $x \triangleleft a := x$ and $x \leftharpoonup a := 0$,
for all $x \in V$, $a \in A$. This right Jacobi $A$-module
structure on $V$ was denoted by $V_0$. If $V \neq \{0\}$, then
${\mathcal J}{\mathcal H}^{2} \, \bigl( V_0, \, A \bigl)  =
\emptyset $. We will prove that in fact the corresponding set
${\mathcal J} {\mathcal E}_{(\triangleleft, \, \leftharpoonup :=
0)} \, (A, V)$ is empty. Indeed, if $\bigl(\triangleright, \, f,
\, \cdot, \, \rightharpoonup, \, \theta, \, \{-, \, - \} \bigl)
\in {\mathcal J} {\mathcal E}_{(\triangleleft, \, \leftharpoonup
:= 0)} \, (A, V)$, then taking into account that $\triangleleft$
acts trivially on $V$, if follows from axiom (A3) that $x \cdot y
= x + x \cdot y$, for all $x$, $y\in V$, thus $V = 0$ and we have
reached a contraction.
\end{example}

\section{Flag Jacobi algebras. Examples}\selabel{exemple}
In this section we will test the efficiency of \thref{main1} and
the decomposition given by \equref{descompunere} in order to
compute ${\mathcal J}{\mathcal H}^{2} \, (V, \, A)$ for the class
of Jacobi algebras defined below:

\begin{definition} \delabel{flagex}
Let $A$ be a Jacobi algebra and $E$ a vector space containing $A$
as a subspace. A Jacobi algebra structure on $E$ is called a
\emph{flag extending structure} of $A$ to $E$ if there exists a
finite chain of Jacobi subalgebras of $E$
\begin{equation} \eqlabel{lant}
E_0 := A \subset E_1 \subset \cdots \subset E_m := E
\end{equation}
such that $E_i$ has codimension $1$ in $E_{i+1}$, for all $i = 0,
\cdots, m-1$. A Jacobi algebra that is a flag extending structure
of the Jacobi algebra $k = k_0$ is called a \emph{flag Jacobi
algebra}.
\end{definition}

All flag extending structures of a given Jacobi algebra $A$ to a
vector space $E$ can be completely described by a recursive
reasoning where the key step is the case when $A$ has codimension
$1$ in $E$. This step will provide the description and
classification of all unified products $A\ltimes k$ of Jacobi
algebras. Then, by replacing $A$ with each of these unified
products $A\ltimes k$, we go on with the recursive process in $m$
steps, where $m$ is the codimension of $A$ in $E$. The tool that
will play the key role in the description of flag extending
structures is the following:

\begin{definition} \delabel{tehnica}
Let $A$ be a Jacobi algebra. A \emph{Jacobi flag datum} of $A$ is
a $6$-tuple $(\Lambda, \, \Delta, \, f_0, \, u, \, \lambda, \,
D)$, consisting of four $k$-linear maps $\Lambda$, $\lambda : A
\to k$, $\Delta$, $D: A \to A$ and two elements $f_0\in A$ and
$u\in k$ satisfying the following compatibilities for any $a$, $b
\in A$:

(JF0) \, $(\Lambda, \, \Delta, \, f_0, \, u)$ is a flag datum of
the associative algebra $A$ and $(\lambda, \, D)$ is a twisted
derivation of the Lie algebra $A$;

(JF1) \, $D(ab) = D(a) \, b + a\, D(b) + \lambda(a) \, \Delta(b) +
\lambda(b) \, \Delta (a) - \lambda(1_A) \Delta (ab) - ab \, D(1_A)
$

(JF2) \, $\lambda(ab) = \lambda(a)\, \Lambda (b) + \lambda(b) \,
\Lambda(a) - \lambda(1_A) \, \Lambda (ab)$

(JF3) \, $\Delta([a, \, b]) = [\Delta(a), \, b] + \Lambda(a) \,
D(b) - a \, D(b) - \lambda(b)\, \Delta (a) + \Delta(a) [1_A, \, b]
+ \Lambda(a) \, \Delta( [1_A, \, b])$

(JF4) \, $\Lambda([a, \, b]) = \Lambda(a) \, \Lambda([1_A, \, b])$

(JF5)  \, $ \Delta \bigl(D(a)\bigl) - D\bigl( \Delta(a) \bigl) \, =
\Delta(a) D(1_A) + \Lambda(a) \, \lambda(1_A) \, f_0 - \lambda(a)
\, f_0 + \Lambda(a) \, \Delta \bigl( D(1_A)\bigl) + $

$ \qquad \qquad \qquad \qquad \qquad \qquad \,\,\, + \,
\lambda(1_A)\, \Delta^2(a)$

(JF6) \, $\Lambda\bigl(D(a)\bigl) - \lambda\bigl(\Delta(a) \bigl)
\, = \Lambda(a) \, \Lambda \bigl(D(1_A) \bigl) + \lambda(1_A) \,
\Lambda(a)\, u - \lambda (a) \, u$

(JF7) \, $2 \, \Delta \bigl(D(a)\bigl) + 2 \, \lambda(a) \, f_0 =
u \, D(a) + [f_0, \, a] + f_0 \, [1_A, \, a] + u\, \Delta([1_A, \,
a])$

(JF8) \, $2\, \Lambda \bigl( D(a) \bigl) \, = - \lambda(a) \, u + u
\, \Lambda ([1_A, \, a]) $

(JF9) \, $D(f_0) + f_0 \, D(1_A) + u\, \Delta \bigl(D(1_A)\bigl) +
\lambda(1_A) \, \Delta(f_0) + u\, \lambda(1_A) \, f_0 = 0$

(JF10) \, $\lambda(f_0) + u\, \Lambda \bigl( D(1_A)\bigl) +
\lambda (1_A) \, \Lambda(f_0) + u^2 \, \lambda(1_A) = 0$

We denote by ${\mathcal J} {\mathcal F} \, (A)$ the set of all
Jacobi flag datums of $A$.
\end{definition}

For further computations we point out that for any $(\Lambda, \,
\Delta, \, f_0, \, u, \, \lambda, \, D) \in {\mathcal F} {\mathcal
J} \, (A)$ we have $\Delta (1_A) = 0$ and $\Lambda (1_A) = 1$.

\begin{proposition}\prlabel{unifdim1}
Let $A$ be a Jacobi algebra. Then there exists a bijection between the set ${\mathcal J}
{\mathcal E} \, (A, \, k)$ of all Jacobi extending structures of
$A$ through $k$ and ${\mathcal J} {\mathcal F} \, (A)$ given such
that the unified product $A \ltimes_{(\Lambda, \, \Delta, \, f_0,
\, u, \, \lambda, \, D)} \,\, k$ corresponding to the Jacobi flag
datum $(\Lambda, \, \Delta, \, f_0, \, u, \, \lambda, \, D) \in
{\mathcal J} {\mathcal F} \,(A)$, denoted by $A_{(\Lambda, \,
\Delta, \, f_0, \, u, \, \lambda, \, D)}$, is the vector space $A
\times k$ with the Jacobi algebra structure given for any $a$, $b
\in A$ and $x$, $y\in k$ by:
\begin{eqnarray}
(a, x) \bullet (b, y) &=& \bigl(ab + x\Delta(b) + y \Delta(a) + xy
\, f_0, \,\, x\Lambda(b) + y\Lambda(a) + xy \, u \bigl)
\eqlabel{002b}
\\
\left[ (a, x), \, (b, y) \right] &=& \bigl( \left[a, \, b \right]
+ x D(b) - y D(a), \,\, x\lambda(b) - y \lambda(a) \bigl)
\eqlabel{002bc}
\end{eqnarray}

Furthermore, a Jacobi algebra $B$ contains $A$ as a Jacobi subalgebra of
codimension $1$ if and only if $B \cong A_{(\Lambda, \, \Delta, \,
f_0, \, u, \, \lambda, \, D)}$, for some Jacobi flag datum
$(\Lambda, \, \Delta, \, f_0, \, u, \, \lambda, \, D)$ of $A$.
\end{proposition}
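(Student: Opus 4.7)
The plan is to specialize \thref{1} to $V = k$ and combine it with \exref{flagalg} and \exref{flagLie}. Since $V$ is one-dimensional, the antisymmetry conditions $\theta(x, x) = 0$ and $\{x, x\} = 0$ from (L1) force the bilinear maps $\theta : k \times k \to A$ and $\{-, -\} : k \times k \to k$ to vanish identically. Hence a Jacobi extending structure of $A$ through $k$ is determined by the remaining six bilinear maps $(\triangleleft, \triangleright, f, \cdot, \leftharpoonup, \rightharpoonup)$, which naturally split into an algebra part and a Lie part.

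First, I would invoke \exref{flagalg} to encode the algebra part $(\triangleleft, \triangleright, f, \cdot)$ as a flag datum $(\Lambda, \Delta, f_0, u) \in {\mathcal F}(A)$ via \equref{bijflag}, and \exref{flagLie} to encode the Lie part $(\leftharpoonup, \rightharpoonup, 0, 0)$ as a twisted derivation $(\lambda, D) \in {\rm TwDer}(A)$ via \equref{bijflaglie}. Axiom (J0) of \thref{1} thus translates exactly into condition (JF0) in the definition of a Jacobi flag datum.

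Second, I would substitute the formulas \equref{bijflag} and \equref{bijflaglie} into each of the mixed compatibilities (J1)--(J10) of \thref{1}. Since every term is bilinear in $x, y, z \in k$, after dividing by the appropriate scalar monomial one obtains a relation purely among $\Lambda, \Delta, f_0, u, \lambda, D$ and the structure maps of $A$; a direct calculation shows that (Jn) becomes (JFn) for each $n = 1, \ldots, 10$. For instance, (J2) reduces immediately to (JF2); in (J4) the difference $(x \triangleleft a) \leftharpoonup b - (x \leftharpoonup b) \triangleleft a$ collapses by commutativity of $k$, leaving (JF4); and the axioms (J5)--(J10) lose all the $\theta$- and $\{-,-\}$-terms and become (JF5)--(JF10) after simplification using $\Lambda \circ \Delta = 0$ from (FA1). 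The explicit formulas \equref{002b} and \equref{002bc} describing the Jacobi algebra $A_{(\Lambda, \Delta, f_0, u, \lambda, D)}$ then follow by substituting \equref{bijflag} and \equref{bijflaglie} into the general unified product formulas \equref{multunifJ} and \equref{brackunifJ}.

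The final claim that a Jacobi algebra $B$ contains $A$ as a codimension-$1$ subalgebra if and only if $B \cong A_{(\Lambda, \Delta, f_0, u, \lambda, D)}$ for some Jacobi flag datum then follows from \prref{classif}: the reverse implication is built into the construction, while the forward implication applies \prref{classif} to $E = B$ with $V$ any $1$-dimensional linear complement of $A$ in $B$. The only real obstacle lies in the second step: matching (Jn) with (JFn) requires careful tracking of the correction terms $[1_A, \cdot]$, $D(1_A)$, $\lambda(1_A)$ and of the identities $\Delta(1_A) = 0$, $\Lambda(1_A) = 1$, $\Lambda \circ \Delta = 0$ implied by (FA1), but the calculation is entirely mechanical and introduces no new conceptual difficulty.
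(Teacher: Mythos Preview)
Your proposal is correct and follows essentially the same approach as the paper: specialize \thref{1} to $V=k$, use \exref{flagalg} and \exref{flagLie} to translate (J0) into (JF0), then verify by direct substitution that (J1)--(J10) become (JF1)--(JF10), and finally appeal to \prref{classif} for the last statement. The paper likewise declares the axiom-by-axiom matching to be ``a long but straightforward computation'' without writing it out, so your level of detail is if anything slightly greater.
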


\begin{proof}
We have to compute the set of all bilinear maps
\begin{eqnarray*}
&& \triangleleft : k \times A \to k, \quad \triangleright : k
\times A \to A, \quad f: k\times k \to A, \quad \cdot \, : k\times
k \to k \\
&& \leftharpoonup: k \times A \to k, \quad \rightharpoonup : k
\times A \to A, \quad \theta: k\times k \to A, \quad \{-, \, -\} :
k\times k \to k
\end{eqnarray*}
satisfying the compatibility conditions (J0)-(J10) in \thref{1}.
To start with, the first part of axiom $(J0)$ tells us that
$(\triangleleft, \, \triangleright, \, f, \, \cdot)$ is an algebra
extending system of $A$ through $k$ and \exref{flagalg} proves
that there is a bijection between the set of all such maps
$(\triangleleft, \, \triangleright, \, f, \, \cdot)$ and the set
of all $4$-tuples $(\Lambda, \, \Delta, \, f_0, \, u) \in A^*
\times {\rm End}_k (A) \times A \times k$ that are flag datums of
the associative algebra $A$: the bijection is given such that the
algebra extending system $\bigl(\triangleleft, \, \triangleright,
\, f, \, \cdot \bigl)$ associated to $(\Lambda, \, \Delta, \, f_0,
\, u) \in {\mathcal F}(A)$ is defined by the formulas
\equref{bijflag}. Secondly, the last assertion of $(J0)$ tells us
that $(\leftharpoonup, \, \rightharpoonup, \, \theta, \, \{-, \,
-\})$ is a Lie extending system of the Lie algebra $A$ through the
vector space $k$ and \exref{flagLie} shows that there is a
bijection between this set and the space ${\rm TwDer} (A)$ of all
twisted derivations of $A$; the bijection is given such that the
Lie extending system $\bigl(\leftharpoonup, \, \rightharpoonup, \,
\theta, \{-, \, -\} \bigl)$ associated to a twisted derivation
$(\lambda, D) \in {\rm TwDer} (A)$ is defined by the formulas
\equref{bijflaglie}. Hence, there exists a bijection between the
set of all bilinear maps $(\triangleleft, \, \triangleright, \, f,
\, \cdot, \, \leftharpoonup, \, \rightharpoonup, \, \theta, \,
\{-, \, -\})$ satisfying (J0) and the set of all $6$-tuples
$(\Lambda, \, \Delta, \, f_0, \, u, \, \lambda, \, D)$ as defined
in \deref{tehnica} satisfying the compatibility conditions (JF0)
and the bijection is given by the formulas
\equref{bijflag}-\equref{bijflaglie}. The rest of the proof is a
long but straightforward computation which shows that, under this
bijection, the compatibility conditions (J1)-(J10) of \thref{1}
take the equivalent forms given by (JF1)-(JF10). Finally, the
Jacobi algebra defined by \equref{002b}-\equref{002bc} is exactly
the associated unified product $A \ltimes k$ defined by
\equref{multunifJ}-\equref{brackunifJ} written in this context.
Finally, the last statement follows from the first part and
\prref{classif}.
\end{proof}

Let $\{e_i \, |\, i\in I\}$ be a basis of a Jacobi algebra $A$ and
$(\Lambda, \, \Delta, \, f_0, \, u, \, \lambda, \, D) \in
{\mathcal J} {\mathcal F} \,(A)$ a Jacobi flag datum. Then
$A_{(\Lambda, \, \Delta, \, f_0, \, u, \, \lambda, \, D)}$ is the
Jacobi algebra having $\{E, \, e_i \, |\, i\in I\}$ as a basis
with the multiplication and the bracket defined for any $i\in I$
by:
\begin{eqnarray}
e_i \bullet e_j &:=& e_i \cdot_A e_j, \quad E \bullet e_i = e_i
\bullet E := \Delta (e_i) + \Lambda(e_i) \, E, \quad E^2 :=
f_0 + u\, E \eqlabel{terente3} \\
\left[e_i, \, e_j \right] &:=& \left[e_i, \, e_j \right]_A, \quad
\left[E, \, e_i \right] := D(e_i) + \lambda(e_i) \, E
\eqlabel{terente3b}
\end{eqnarray}
where $\cdot_A$ (resp. $[-, \, -]_A$) is the multiplication (resp.
the bracket) on $A$. The existence of these Jacobi algebras
depends on the Jacobi algebra $A$. An interesting fact is the
following:

\begin{corollary}\colabel{prefect}
Let $A$ be a perfect Poisson algebra, i.e. $[A, \, A] = A$. Then,
there is no Jacobi algebra which contains $A$ as a subalgebra of
codimension $1$.
\end{corollary}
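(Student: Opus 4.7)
The plan is to argue by contradiction using the classification of codimension-one extensions provided by \prref{unifdim1}. Suppose a Jacobi algebra $B$ exists that contains $A$ as a subalgebra of codimension $1$. By \prref{unifdim1}, we have $B \cong A_{(\Lambda, \, \Delta, \, f_0, \, u, \, \lambda, \, D)}$ for some Jacobi flag datum $(\Lambda, \, \Delta, \, f_0, \, u, \, \lambda, \, D) \in {\mathcal J}{\mathcal F}(A)$. So the task reduces to showing that the perfectness of $A$ as a Poisson algebra is incompatible with the existence of any Jacobi flag datum on $A$.

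The key axiom to exploit is (JF4), which states that $\Lambda([a, \, b]) = \Lambda(a) \, \Lambda([1_A, \, b])$ for all $a$, $b \in A$. Since $A$ is assumed to be Poisson (not just Jacobi), the identity $[1_A, \, b] = 0$ holds for every $b \in A$, and therefore (JF4) collapses to $\Lambda([a, \, b]) = 0$ for all $a$, $b \in A$. In other words, $\Lambda$ vanishes on the Lie ideal $[A, \, A]$.

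Now I invoke the hypothesis that $A$ is perfect as a Lie algebra, i.e.\ $[A, \, A] = A$. Combined with the previous step this forces $\Lambda \equiv 0$ on all of $A$. However, (JF0) requires $(\Lambda, \, \Delta, \, f_0, \, u)$ to be a flag datum of the associative algebra $A$, so by (FA1) of \exref{flagalg}, $\Lambda$ is a unital algebra map, whence $\Lambda(1_A) = 1 \neq 0$. This contradiction rules out the existence of any Jacobi flag datum on $A$, and hence the existence of any Jacobi algebra containing $A$ as a codimension-one subalgebra.

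There is no real obstacle here: the whole argument is just a one-line exploitation of (JF4) together with $[1_A, \, -] = 0$ and perfectness. The only subtlety worth double-checking is that (JF4) is indeed the correct axiom that connects $\Lambda$ with the Lie bracket (rather than, say, an axiom involving $\lambda$), but this is immediate from \deref{tehnica}.
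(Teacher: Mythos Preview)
Your proof is correct and follows essentially the same approach as the paper: both reduce via \prref{unifdim1} to showing that ${\mathcal J}{\mathcal F}(A) = \varnothing$, then use axiom (JF4) together with $[1_A,\,-] = 0$ and perfectness to force $\Lambda \equiv 0$, contradicting $\Lambda(1_A) = 1$ from (FA1).
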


\begin{proof}
It follows from \prref{unifdim1} if we prove that the set
${\mathcal J} {\mathcal F} \,(A)$ is empty. Indeed, let $(\Lambda,
\, \Delta, \, f_0, \, u, \, \lambda, \, D) \in {\mathcal J}
{\mathcal F} \,(A)$. Since $[1_A, \, a] = 0$, for all $a \in A$,
it follows from axiom (JF4) that $\Lambda ([a, \, b]) = 0$, for
all $a$, $b\in A$. As $A$ is perfect as a Lie algebra, we obtain
that $\Lambda (x) = 0$, for any $x\in A$, contrary to axiom (FA1)
which ensures that $\Lambda (1_A) = 1$.
\end{proof}

\begin{remark} \relabel{schurinv}
A basic invariant of a finite dimensional Lie algebra
$\mathfrak{g}$ is the Schur invariant defined by $\alpha
(\mathfrak{g}) :=$ the maximal dimension of an abelian subalgebra
of $\mathfrak{g}$. There is a vast literature devoted to computing
this number for several classes of Lie algebras such as
(semi)simple, (super)solvable, etc. - see \cite{BC, CT} and their
references. Lie algebras for which $\alpha (\mathfrak{g}) = {\rm
dim} (\mathfrak{g}) - 1 $ are fully described in \cite{gor}: below
we give the Jacobi algebra version of this result. For a finite
dimensional Jacobi algebra $J$ we define the Schur invariant by
the formula:
$$
\alpha (J) := {\rm max}\, \{ \, {\rm dim}\,(A) ~|~ A\,\, {\rm
is}\,\, {\rm an}\,\, {\rm abelian} \,\, {\rm Jacobi} \,\, {\rm
subalgebra}\,\, {\rm of}\,\, J\}
$$
Using \prref{unifdim1} we obtain that  a $(n+1)$-dimensional
Jacobi algebra $J$ has $\alpha (J) = {\rm dim} (J) - 1 $ if and
only if $J \cong A_{(\Lambda, \, \Delta, \, f_0, \, u, \, \lambda,
\, D)}$, where $A$ is an $n$-dimensional algebra with basis $\{e_i
\, |\, i = 1, \cdots, n \}$ and $A_{(\Lambda, \, \Delta, \, f_0,
\, u, \, \lambda, \, D)}$ is the Jacobi algebra having $\{E, \,
e_i \, |\, i = 1, \cdots, n \}$ as a basis and the multiplication
and the bracket is given for any $i$, $j = 1, \cdots n$ by:
\begin{eqnarray*}
e_i \bullet e_j &:=& e_i \cdot_A e_j, \quad E \bullet e_i = e_i
\bullet E := \Delta (e_i) + \Lambda(e_i) \, E, \quad E^2 :=
f_0 + u\, E \\
\left[E, \, e_i \right] &:=& D(e_i) + \lambda(e_i) \, E
\end{eqnarray*}
for some $6$-tuple $(\Lambda, \, \Delta, \, f_0, \, u, \, \lambda,
\, D) \in {\mathcal J} {\mathcal F} \,(A)$ with $(D, \, \lambda)
\neq (0, 0)$. We mention that in this case the axioms (JF0)-(JF10)
which need to be fulfilled by the $6$-tuples $(\Lambda, \, \Delta,
\, f_0, \, u, \, \lambda, \, D)$ take a simplified form as the Lie
bracket on $A$ is trivial.
\end{remark}

Now we will classify the algebras $A_{(\Lambda, \, \Delta, \, f_0,
\, u, \, \lambda, \, D)}$ by providing the first explicit
classification result of the ES-problem for Jacobi algebras:

\begin{theorem}\thlabel{clasdim1}
Let $A$ be a Jacobi algebra of codimension $1$ in the vector space
$E$. Then there exist a bijection
\begin{equation}\eqlabel{flagformmare}
{\rm Extd}_{{\mathcal J}} \, (E, \, A) \cong {\mathcal J}{\mathcal
H}^{2} \, (k, \, A) \cong  {\mathcal J} {\mathcal F} \,(A)/\equiv
\end{equation}
where $\equiv $ is the equivalence relation on the set ${\mathcal
J} {\mathcal F} \, (A)$ of all Jacobi flag datums of $A$ defined
as follows: $(\Lambda, \, \Delta, \, f_0, \, u, \, \lambda, \, D)
\equiv (\Lambda', \, \Delta', \, f_0', \, u', \, \lambda', \, D')$
if and only if $\Lambda' = \Lambda$, $\lambda' = \lambda$, $u' =
u$ and there exists $\alpha \in A$ such that for any $a \in A$ we
have:
\begin{eqnarray}
\Delta' (a) &=& \Delta (a) + \Lambda(a) \alpha - a \alpha \eqlabel{ecfl1} \\
f_0' &=& f_0 + \alpha^2 + u \alpha - 2 \Lambda(\alpha) \alpha - 2
\Delta (\alpha) \eqlabel{ecfl2} \\
D'(a) &=& D(a) + \lambda(a) \alpha - \left[\alpha, \, a
\right]\eqlabel{ecfl3}
\end{eqnarray}
The bijection between ${\mathcal J} {\mathcal F} \,(A)/\equiv$ and
${\rm Extd}_{{\mathcal J}} \, (E, \, A)$ is given by $
\overline{(\Lambda, \, \Delta, \, f_0, \, u, \, \lambda, \, D)}
\mapsto A_{(\Lambda, \, \Delta, \, f_0, \, u, \, \lambda, \, D)}$,
where $\overline{(\Lambda, \, \Delta, \, f_0, \, u, \, \lambda, \,
D)}$ is the equivalence class of $(\Lambda, \, \Delta, \, f_0, \,
u, \, \lambda, \, D)$ via the relation $\equiv$.
\end{theorem}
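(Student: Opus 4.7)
The plan is to deduce the theorem by composing two bijections already established in the excerpt and then translating the equivalence relation $\approx$ on Jacobi extending structures into the relation $\equiv$ on Jacobi flag datums. Since $A$ has codimension one in $E$, the vector space $V := k$ is a complement of $A$ in $E$, so \thref{main1} yields a canonical bijection ${\rm Extd}_{{\mathcal J}}(E, A) \cong {\mathcal J}{\mathcal H}^{2}(k, A) = {\mathcal J}{\mathcal E}(A, k)/\approx$. On the other hand, \prref{unifdim1} provides a bijection ${\mathcal J}{\mathcal E}(A, k) \cong {\mathcal J}{\mathcal F}(A)$ sending a Jacobi extending structure $\bigl(\triangleleft, \triangleright, f, \cdot, \leftharpoonup, \rightharpoonup, \theta, \{-, -\}\bigl)$ to the flag datum $(\Lambda, \Delta, f_0, u, \lambda, D)$ by way of \equref{bijflag}--\equref{bijflaglie}. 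Hence the only remaining task is to show that this second bijection carries $\approx$ to $\equiv$; once this is done, the composite bijection of the theorem follows at once, and the class $\overline{(\Lambda, \Delta, f_0, u, \lambda, D)}$ in ${\mathcal J}{\mathcal F}(A)/\equiv$ corresponds to the isomorphism class of $A_{(\Lambda, \Delta, f_0, u, \lambda, D)}$.

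For this transcription I exploit the fact that since $V = k$ is one-dimensional, every linear map $r : V \to A$ is uniquely determined by the single element $\alpha := r(1) \in A$ via $r(x) = x\alpha$. I then substitute $r(x) = x\alpha$ into each of the six identities displayed in \prref{echiaabb}(2) and rewrite both sides using the formulas \equref{bijflag}--\equref{bijflaglie} that encode $(\Lambda, \Delta, f_0, u, \lambda, D)$. The conditions $\triangleleft' = \triangleleft$ and $\leftharpoonup' = \leftharpoonup$ translate directly to $\Lambda' = \Lambda$ and $\lambda' = \lambda$. The formula for $\triangleright'$ gives \equref{ecfl1} for $\Delta'$, the formula for $\rightharpoonup'$ gives \equref{ecfl3} for $D'$, and the formula for $f'$ yields \equref{ecfl2} for $f_0'$ after substituting the already derived expression for $\Delta'(\alpha)$ and invoking commutativity of $A$. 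The formula for $\cdot'$ produces the required compatibility between $u$ and $u'$, while the formulas for $\theta'$ and $\{-, -\}'$ reduce to tautologies since $\theta = \theta' = 0$ and $\{-, -\} = \{-, -\}' = 0$ by \equref{bijflaglie}, and the contributions involving $r$ cancel pairwise thanks to commutativity of $A$ and linearity of $D$ and $\lambda$.

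The main obstacle is bookkeeping rather than substance: one must carefully exploit commutativity of $A$, the linearity of $\Lambda$, $\lambda$, $\Delta$, $D$, and the fact that under the bijection of \prref{unifdim1} the components $\theta$ and $\{-, -\}$ vanish identically, causing several of the six formulas in \prref{echiaabb}(2) to collapse to trivial identities whose entire nontrivial content is captured by \equref{ecfl1}, \equref{ecfl2}, and \equref{ecfl3}. Once this is established, the relation $\equiv$ on ${\mathcal J}{\mathcal F}(A)$ automatically inherits the property of being an equivalence relation from $\approx$ (which is an equivalence relation by \thref{main1}), and the composite bijection ${\rm Extd}_{{\mathcal J}}(E, A) \cong {\mathcal J}{\mathcal H}^{2}(k, A) \cong {\mathcal J}{\mathcal F}(A)/\equiv$ follows from chaining the two identifications together, with the correspondence explicitly given by $\overline{(\Lambda, \Delta, f_0, u, \lambda, D)} \mapsto A_{(\Lambda, \Delta, f_0, u, \lambda, D)}$.
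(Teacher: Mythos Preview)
Your proposal is correct and follows essentially the same approach as the paper: invoke \thref{main1} for the bijection ${\rm Extd}_{{\mathcal J}}(E,A)\cong {\mathcal J}{\mathcal H}^2(k,A)$, use \prref{unifdim1} to identify ${\mathcal J}{\mathcal E}(A,k)$ with ${\mathcal J}{\mathcal F}(A)$, and then translate the cohomology relation of \prref{echiaabb} via the identification of a linear map $r:k\to A$ with the element $\alpha=r(1)\in A$. You simply spell out in more detail which formula in \prref{echiaabb}(2) yields each of \equref{ecfl1}--\equref{ecfl3} and why the $\theta'$ and $\{-,-\}'$ conditions become vacuous, whereas the paper compresses all of this into the single remark that the compatibility conditions ``take precisely the form given in the statement.''
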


\begin{proof}
Let $(\Lambda, \, \Delta, \, f_0, \, u, \, \lambda, \, D)$,
$(\Lambda', \, \Delta', \, f_0', \, u', \, \lambda', \, D') \in
{\mathcal J} {\mathcal F} \, (A)$ and $\Upsilon (A, V)$,
respectively $\Upsilon' (A, V)$ be the corresponding Jacobi
algebra extending structures. Since ${\rm dim}_k (V) = 1$, any
linear map $r: V \to A$ is uniquely determined by an element
$\alpha \in A$ such that $r(x) = \alpha$, where $\{x\}$ is a basis
in $V$. We can easily see that the compatibility conditions from
\prref{echiaabb} applied to Jacobi flag datums take precisely the
form given in the statement and hence the proof follows from
\thref{main1} and \prref{unifdim1}.
\end{proof}

\begin{remark} \relabel{practica}
In practice, in order to compute the quotient set ${\mathcal J}
{\mathcal F} \,(A)/\equiv$ constructed in \thref{clasdim1} we
shall use the decomposition \equref{descompunere} obtained in
\coref{descompunere} by going through the following steps. First
of all, we shall fix a pair $(\Lambda, \lambda) \in A^* \times
A^*$, consisting of an associative algebra map $\Lambda: A \to k$
and a Lie algebra map $\lambda : A \to k$ satisfying the
compatibility conditions \equref{modflag2b}. Secondly, for a given
such pair $(\Lambda, \lambda)$ we fix a scalar $u \in k$ and
compute the set ${\mathcal J} {\mathcal F}_{(\Lambda,
\lambda)}^{u} \,(A)$ consisting of all triples $(\Delta, \, f_0,
\, D) \in {\rm End}_k (A) \times A \times {\rm End}_k (A)$ such
that $(\Lambda, \, \Delta, \, f_0, \, u, \, \lambda, \, D)$ is a
Jacobi flag datum of $A$. Two triples $(\Delta, \, f_0, \, D)$ and
$(\Delta', \, f_0', \, D') \in {\mathcal J} {\mathcal
F}_{(\Lambda, \lambda)}^{u}  \,(A)$ are equivalent and we denote
this by $(\Delta, \, f_0, \, D) \equiv_{(\Lambda, \lambda)}^{u} \,
(\Delta', \, f_0', \, D')$ if and only if there exists $\alpha \in
A$ such that \equref{ecfl1}-\equref{ecfl3} hold. Finally, we
compute the quotient set ${\mathcal J} {\mathcal F}_{(\Lambda,
\lambda)}^{u} \, (A)/\equiv_{(\Lambda, \lambda)}^{u}$ --
${\mathcal J}{\mathcal H}^{2} \, (k, \, A)$ will be the coproduct
of these quotients sets over all triples $(\Lambda, \lambda, u)$
-- and then we list the isomorphism classes of the associated
Jacobi algebras $A_{(\Lambda, \, \Delta, \, f_0, \, u, \, \lambda,
\, D)}$ using \equref{terente3}-\equref{terente3b}. To conclude,
using \thref{clasdim1} and \coref{descompunere}, we obtain:
\end{remark}

\begin{corollary}\colabel{desc2fagmare}
Let $A$ be a Jacobi algebra. Then:
\begin{equation} \eqlabel{descompflag2}
{\mathcal J}{\mathcal H}^{2} \, (k, \, A) \cong \,
\sqcup_{(\Lambda, \lambda)} \, \Bigl(\sqcup_u \,\, \bigl(
{\mathcal J} {\mathcal F}_{(\Lambda, \lambda)}^{u}  \,
(A)/\equiv_{(\Lambda, \lambda)}^{u} \bigl) \Bigl)
\end{equation}
where the coproducts in the right hand side are made in the
category of sets over all possible pairs $(\Lambda, \lambda)$
consisting of an associative algebra map $\Lambda: A \to k$ and a
Lie algebra map $\lambda : A \to k$ satisfying \equref{modflag2b}
and over all scalars $u\in k$.
\end{corollary}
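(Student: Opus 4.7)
The plan is to combine \thref{clasdim1} with the decomposition given in \coref{descompunere} specialized to the case $V = k$. By \thref{clasdim1} we already have the bijection ${\mathcal J}{\mathcal H}^{2}(k,\,A) \cong {\mathcal J}{\mathcal F}(A)/\equiv$, so the entire task is to produce a natural double-indexed partition of the quotient set ${\mathcal J}{\mathcal F}(A)/\equiv$ matching the right-hand side of \equref{descompflag2}.

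First I would observe that the equivalence relation $\equiv$ of \thref{clasdim1} preserves three pieces of data: reading off the definition, $(\Lambda, \Delta, f_0, u, \lambda, D) \equiv (\Lambda', \Delta', f_0', u', \lambda', D')$ forces $\Lambda' = \Lambda$, $\lambda' = \lambda$ and $u' = u$, while only $\Delta$, $f_0$ and $D$ get twisted by the element $\alpha \in A$ via \equref{ecfl1}--\equref{ecfl3}. Consequently, the assignment $(\Lambda, \Delta, f_0, u, \lambda, D) \mapsto (\Lambda, \lambda, u)$ descends to a well-defined map ${\mathcal J}{\mathcal F}(A)/\equiv \, \longrightarrow \, A^* \times A^* \times k$, and the fiber over $(\Lambda, \lambda, u)$ is, by construction, exactly ${\mathcal J}{\mathcal F}_{(\Lambda, \lambda)}^{u}(A)/\equiv_{(\Lambda, \lambda)}^{u}$. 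This already yields the desired coproduct decomposition, but with the coproduct indexed over all triples $(\Lambda, \lambda, u)$ for which the fiber is nonempty.

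The second step is to identify the admissible $(\Lambda, \lambda)$: by axioms (FA1), (FL1), (JF2) and (JF4) of \deref{tehnica} any $(\Lambda, \lambda)$ arising from a Jacobi flag datum must be an algebra map together with a Lie algebra map satisfying precisely the compatibility conditions \equref{modflag2b}, that is, the conditions characterizing right Jacobi $A$-module structures on $k$ via \exref{modulek}(3). This matches exactly the indexing set of the outer coproduct in \equref{descompflag2}, and for such a pair the scalar $u$ is allowed to range freely over $k$, giving the inner coproduct. Empty fibers contribute the empty set to the coproduct and may be included harmlessly, which is consistent with the viewpoint of \coref{descompunere} where the outer coproduct in \equref{descompunere} runs over all Jacobi $A$-module structures on $V$, some of which may yield empty classes (cf.\ \exref{cazvid}).

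No step here is a true obstacle: once the invariance of $(\Lambda, \lambda, u)$ under $\equiv$ has been checked directly from \equref{ecfl1}--\equref{ecfl3} and the parametrization of Jacobi $A$-module structures on $k$ is recalled from \exref{modulek}(3), assembling the bijection is a formal set-theoretic partitioning. The mildly delicate point is to notice that although \coref{descompunere} partitions only by the pair $(\triangleleft, \leftharpoonup)$ (i.e.\ by $(\Lambda, \lambda)$), the further refinement by $u$ is forced by the equivalence relation in the flag-datum picture and has no analogue in the general decomposition of \coref{descompunere}; this is why the right-hand side of \equref{descompflag2} carries a double coproduct rather than a single one.
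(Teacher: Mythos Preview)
Your proposal is correct and follows essentially the same approach as the paper: the paper derives the corollary directly from \thref{clasdim1} and \coref{descompunere} via the strategy spelled out in \reref{practica}, namely observing that $(\Lambda,\lambda,u)$ are invariants of the relation $\equiv$ and that the admissible pairs $(\Lambda,\lambda)$ are exactly those of \exref{modulek}(3). Your observation that the refinement by $u$ is an extra layer not present in the general decomposition of \coref{descompunere} is accurate and matches the paper's presentation.
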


\begin{remark} \relabel{flagpoiss}
Let $A$ be a Poisson algebra. The Poisson algebra version of
\thref{clasdim1} and \coref{desc2fagmare} for computing ${\mathcal
P}{\mathcal H}^{2} \, (k, \, A)$ defined in \reref{possversES} are
obtained as follows. First, we define the set ${\mathcal P}
{\mathcal F} \, (A)$ of all \emph{Poisson flag datums} of $A$: it
coincides with the set of all $6$-tuples $(\Lambda, \, \Delta, \,
f_0, \, u, \, \lambda, \, D)$, consisting of four $k$-linear maps
$\Lambda$, $\lambda : A \to k$, $\Delta$, $D: A \to A$ and two
elements $f_0\in A$ and $u\in k$ satisfying the following
compatibilities for any $a$, $b \in A$:

(PF0) \, $(\Lambda, \, \Delta, \, f_0, \, u)$ is a flag datum of
the associative algebra $A$ and $(\lambda, \, D)$ is a twisted
derivation of the Lie algebra $A$;

(PF1) \, $D(ab) = D(a) \, b + a\, D(b) + \lambda(a) \, \Delta(b) +
\lambda(b) \, \Delta (a) $

(PF2) \, $\lambda(ab) = \lambda(a)\, \Lambda (b) + \lambda(b) \,
\Lambda(a) $

(PF3) \, $\Delta([a, \, b]) = [\Delta(a), \, b] + \Lambda(a) \,
D(b) - a \, D(b) - \lambda(b)\, \Delta (a)$

(PF4) \, $\Lambda([a, \, b]) = 0$, \quad $D(f_0) = 0$, \quad
$\lambda(f_0) = 0$

(PF5) \, $ D\bigl( \Delta(a) \bigl) - \Delta \bigl(D(a)\bigl) \, =
\lambda(a) \, f_0 $, \quad $\lambda\bigl(\Delta(a) \bigl) - \Lambda\bigl(D(a)\bigl) \, = \lambda (a) \, u$

(PF6) \, $2 \, \Delta \bigl(D(a)\bigl) + 2 \, \lambda(a) \, f_0 =
u \, D(a) + [f_0, \, a]$, \quad $2\, \Lambda \bigl( D(a) \bigl) \, =
- \lambda(a) \, u $

Then there exists a bijection ${\mathcal P}{\mathcal H}^{2} \, (k,
\, A) \, \cong \, {\mathcal P} {\mathcal F} \,(A)/\equiv $, where
$\equiv $ is the equivalence relation defined exactly as in
\thref{clasdim1}, but on the set ${\mathcal P} {\mathcal F} \,
(A)$ of all Poisson flag datums of the Poisson algebra $A$.
\end{remark}

Now, we will illustrate by examples the efficiency of
\coref{desc2fagmare} in classifying flag Jacobi algebras. The
strategy followed will be that of \reref{practica} imposed by the
decomposition of ${\mathcal J}{\mathcal H}^{2} \, (k, \, A)$ given
in \equref{descompflag2}. Moreover, if $A$ is a Poisson algebra we
will also describe ${\mathcal P}{\mathcal H}^{2} \, (k, \, A)$ in
order to illustrate the difference between it and ${\mathcal
J}{\mathcal H}^{2} \, (k, \, A)$. The model is given below and we
make the following convention: all undefined bracket or
multiplication of two elements of a basis is zero. Let $J^{2, \,
1}$ be the $2$-dimensional Jacobi algebra of \exref{dim2}.

\begin{proposition} \prlabel{J21}
Let $k$ be a field of characteristic $\neq 2$. Then:
\begin{eqnarray}
{\mathcal J}{\mathcal H}^{2} \, (k, \, J^{2, \, 1} ) \, &\cong &
\, (k \times k^* \times k) \, \sqcup \, (k^* \times k^2) \, \sqcup
\, k^* \sqcup k^* \, \sqcup \, k^4 \eqlabel{formmar}
\end{eqnarray}
and the equivalence classes of all $3$-dimensional flag Jacobi
algebra over $J^{2, \, 1}$ are the Jacobi algebras $J^{3,
1}_{(\lambda_1, \, \lambda_2, \, u)}$, \, $J^{3, 2}_{(\lambda_1,
\, u, \, f)}$, \, $J^{3, 3}_{\delta}$, \, $J^{3, 4}_{u}$, \,
$J^{3, 5}_{(u, \, f, \, d_1, \, d_2)}$ having $\{1, \, x, \, y \}$
as a basis and the multiplication and the bracket defined in Table
\ref{1}.
\begin{center}
\begin{table}
\begin{tabular}{ | l | c | c | }
\hline
Jacobi algebra & Algebra structure & Lie bracket\\
\hline

$J^{3, 1}_{(\lambda_1, \, \lambda_2, \, u)}$ & $x^2 = 0$ & $\left[
y, \, 1 \right] = - 2^{-1}\lambda_1 u + \lambda_1 y$ \\

$(\lambda_1, \lambda_2, u) \in k \times k^* \times k$ &  $yx = xy
= 2^{-1} \, u \, x$ & $\left[y, \,
x \right] = - 2^{-1}\lambda_2 u + \lambda_2 y$ \\
& $y^2 = - 4^{-1} \, u^2 + u \, y$ &  \\\hline

$J^{3, 2}_{(\lambda_1, \, u, \, f)}$ & $x^2 = 0$ &  \\
$(\lambda_1, u, f) \in k^* \times k^2$ & $yx = xy = 2^{-1} \, u \,
x$ & $\left[y, \, 1
\right] = - 2^{-1} \, \lambda_1 u + \lambda_1 \, y$ \\
& $y^2 = - 4^{-1} \, u^2 + f \, x+ u \, y$ & \\\hline

$J^{3, 3}_{\delta}$, \, $\delta \in k^{*}$ & $x^2 = 0$, \,$y^2 = \delta^2$ & {\rm abelian} \\
& $yx = xy = \delta \, x$ & \\\hline

$J^{3, 4}_{u}$,\, $u \in k^{*}$ &  $x^2 = 0$, \,$y^2 = u\,y$ & {\rm abelian} \\
& $yx = xy = 0$ & \\\hline

$J^{3, 5}_{(u, \, f, \, d_1, \, d_2)}$ & $x^2 = 0$ & $\left[y, \,
1 \right] = d_1 x$  \\
$(u, \, f, d_1, d_2) \in k^4$ & $yx = xy = 2^{-1}u \, x$ &
$\left[y, \, x \right] = d_2 x$  \\
& $y^2 = -4^{-1} u^2 + f \, x + u \, y$ & \\\hline
\end{tabular}
\caption{Flag Jacobi algebras over $J^{2, \, 1}$.} \label{1}
\end{table}
\end{center}
\end{proposition}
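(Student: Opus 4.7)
The plan is to apply \coref{desc2fagmare} together with the recursive procedure outlined in \reref{practica}. The first step would be to enumerate the admissible pairs $(\Lambda, \lambda)$: since $x^2 = 0$ in $J^{2, 1}$, any algebra map $\Lambda: J^{2, 1} \to k$ satisfies $\Lambda(x)^2 = 0$, hence $\Lambda(x) = 0$ and $\Lambda$ is uniquely determined. The Lie bracket on $J^{2, 1}$ being trivial, every linear functional $\lambda$ is automatically a Lie algebra map and both compatibilities in \equref{modflag2b} hold; thus the set of admissible pairs is parameterized by $(\lambda_1, \lambda_2) := (\lambda(1), \lambda(x)) \in k^2$.

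Next, for each fixed triple $(\Lambda, \lambda, u)$ I would compute ${\mathcal J}{\mathcal F}^{u}_{(\Lambda, \lambda)}(J^{2, 1})$. Writing $\Delta(1) = 0$ (forced by definition), $\Delta(x) = qx$ (forced by $\Lambda \circ \Delta = 0$), $f_0 = e + fx$, $D(1) = d_0 + d_1 x$, $D(x) = d_2 + d_3 x$, I would plug these expressions into the axioms (FA1)--(FA3), (FL1)--(FL2) and (JF1)--(JF10) of \deref{tehnica}. A straightforward cascade reduces the system to a small set of polynomial relations: for instance, (FA3) yields $e = q(q - u)$; the instances $(a,b) = (x,1)$ and $(a,b) = (x,x)$ of (JF3) yield $d_0 = -\lambda_1 q$ and $d_2 = -\lambda_2 q$; and (JF8), using the assumption ${\rm char} \, (k) \neq 2$, yields $d_0 = -\lambda_1 u/2$ and $d_2 = -\lambda_2 u/2$, so in particular $\lambda_i (u - 2q) = 0$ for $i = 1, 2$. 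The remaining axioms (JF5)--(JF7), (JF9)--(JF10) then constrain $d_1$, $d_3$ and $f$ in terms of $\lambda_1, \lambda_2, u, q$.

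The third step would be to impose the equivalence $\equiv$ from \thref{clasdim1}. For $\alpha = \alpha_0 + \alpha_1 x \in J^{2, 1}$, the relations \equref{ecfl1}--\equref{ecfl3} furnish a two-parameter family of normalizations that can be used to absorb two of the free scalars (typically $q$ and either $d_1$ or a translate of $f$). A case analysis driven by the key dichotomy $\lambda_i(u - 2q) = 0$ -- distinguishing whether $\lambda_1$ and/or $\lambda_2$ vanish, whether $u = 0$, and whether $u = 2q$ -- then produces precisely five disjoint strata, with parameter spaces $k \times k^{*} \times k$, $k^{*} \times k^2$, $k^{*}$, $k^{*}$, and $k^4$ respectively. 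This yields the decomposition \equref{formmar}. Each canonical datum would then be converted into the explicit Jacobi algebra via \equref{terente3}--\equref{terente3b}, renaming the adjoined basis vector $E$ as $y$, reproducing Table \ref{1}.

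The main obstacle is the case analysis of the third step. Although each individual axiom is a short polynomial identity, their combined solution set stratifies into several pieces, and one must verify both that the equivalence $\equiv$ is fully consumed within each stratum -- so that the listed parameters genuinely label distinct points of ${\mathcal J}{\mathcal H}^{2}(k, J^{2, 1})$ -- and that no representative of one family is equivalent to a representative of another. Both verifications reduce to a finite number of elementary substitution checks, but bookkeeping the redundancies across strata is the most delicate point.
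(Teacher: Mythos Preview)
Your proposal is correct and follows essentially the same approach as the paper: both identify the unique $\Lambda$, parameterize $\lambda$ by $k^2$, reduce the axioms (JF0)--(JF10) to the constraints $d_0 = -\lambda_1\delta = -\lambda_1 u/2$, $d_2 = -\lambda_2\delta = -\lambda_2 u/2$ (your $q$ is the paper's $\delta$) together with the conditions involving $2\delta - u$, and then split into cases according to whether $\lambda_2$ and $\lambda_1$ vanish. The paper organizes the case analysis as $J_1$ ($\lambda_2 \neq 0$), $J_2$ ($\lambda_2 = 0$, $\lambda_1 \neq 0$), $J_3$ ($\lambda_1 = \lambda_2 = 0$), giving full details only for $J_1$ and leaving the further subdivision of $J_3$ into the three remaining families to the reader, which is exactly the bookkeeping you flag as the delicate point.
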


\begin{proof}
To start with, we should notice that there is only one algebra map
$\Lambda: J^{2, \, 1} \to k$ namely the one given by $\Lambda (1)
= 1$ and $\Lambda(x) = 0$. We denote $\lambda (1) = \lambda_1$ and
$\lambda (x) = \lambda_2$, for some $(\lambda_1, \, \lambda_2) \in
k^2$. Then $\lambda: J^{2, \, 1} \to k$ is a Lie algebra map (the
Lie brackets on $J^{2, \, 1}$ and $k$ are both abelian) and
moreover \equref{modflag2b} is fulfilled. Thus, the set of all
pairs $(\Lambda, \lambda)$ consisting of an algebra map $\Lambda:
J^{2, \, 1} \to k$ and a Lie algebra map $\lambda : J^{2, \, 1}
\to k$ satisfying \equref{modflag2b} is in bijection with $k^2$.
From now on we fix a pair $(\lambda_1, \, \lambda_2) \in k^2$ and
a scalar $u\in k$. We will describe the set ${\mathcal J}
{\mathcal F}_{(\Lambda, \lambda)}^{u}  \,(J^{2, \, 1})$ consisting
of all triples $(\Delta, \, f_0, \, D)$ such that $(\Lambda, \,
\Delta, \, f_0, \, u, \, \lambda, \, D)$ is a Jacobi flag datum of
$J^{2, \, 1}$. We identify the $k$-linear maps $\Delta$ and $D :
J^{2, \, 1} \to J^{2, \, 1} $ with their associated matrices with
respect to the basis $\{1, \, x\}$:
\begin{eqnarray*}
\Delta = \left( \begin{array}{cc} \Delta_{11} & \Delta_{12}\\
\Delta_{21} & \Delta_{22} \end{array}\right) \qquad
D = \left( \begin{array}{cc} d_{11} & d_{12}\\
d_{21} & d_{22} \end{array}\right) \qquad f_0 = f_1 + f_2 \, x
\end{eqnarray*}
for some $(\Delta_{ij}) \in k^4$, $(d_{ij}) \in k^4$, $(f_1, \,
f_2) \in k^2$. We have to determine all these scalars such that
axioms (JF0)-(JF10) are fulfilled for a fixed triple $(\lambda_1,
\, \lambda_2, \, u) \in k^3$. Since this comes down to a laborious
computation we will only provide a sketch of it.
For instance, we can easily obtain that axiom (JF0) holds for a fixed
triple $(\lambda_1, \, \lambda_2, \, u) \in k^3$ if and only if
$(\Delta, \, f_0, \, D)$ is given by
\begin{eqnarray}
\Delta = \left( \begin{array}{cc} 0 & 0 \\
0 & \delta \end{array}\right) \qquad
D = \left( \begin{array}{cc} d_{11} & d_{12}\\
d_{21} & d_{22} \end{array}\right) \qquad f_0 = \delta (\delta -
u) + f \, x  \eqlabel{muica}
\end{eqnarray}
for some $\delta$, $f \in k$ and $(d_{ij}) \in k^4$ such that
$\lambda_1 d_{12} = \lambda_2 d_{11}$ and $\lambda_1 d_{22} =
\lambda_2 d_{21}$. We continue in similar manner by testing the
remaining compatibilities: for instance (JF2) and (JF4) hold
automatically for such a triple $(\Delta, \, f_0, \, D)$ while we
can easily see that (JF1) (resp. (JF3)) holds if and only if
$d_{12} = - \lambda_2 \delta$ (resp. $d_{11} = - \lambda_1
\delta$). On the other hand we can prove that (JF5) holds if and
only if $\lambda_2 \, f = 0$ and $\lambda_2 \, \delta ( 2 \delta -
u) = 0$. Moreover, (JF8) holds if and only if $d_{11} = - 2^{-1}
\, \lambda_1 u$ and $d_{12} = - 2^{-1} \, \lambda_2 u$). By
eliminating the redundant compatibilities we can conclude that
axioms (JF0)-(JF10) are fulfilled if and only if the triple
$(\Delta, \, f_0, \, D)$ given by \equref{muica} satisfies the
following equations:
\begin{eqnarray}
&& d_{11} = - \lambda_1 \delta = - 2^{-1} \, \lambda_1 u, \quad
d_{12} = - \lambda_2 \delta = - 2^{-1} \, \lambda_2 u, \quad
\lambda_1 d_{22} = \lambda_2 d_{21} \eqlabel{mui1} \\
&& \lambda_2 f = \lambda_2 (2 \, \delta - u) = \lambda_1 \delta (2
\, \delta - u ) =  f d_{22} = 0 \eqlabel{mui2} \\
&& d_{21} (2 \, \delta - u) = d_{22} (2 \, \delta - u) = 0, \quad
\lambda_1 ( 2 \, \delta^2 + \delta \, u - u^2) = 0 \eqlabel{mui3}
\end{eqnarray}
Now, based on \equref{mui2} we can decompose the
set of all Jacobi flag datums of $J^{2, \, 1}$ as follows
$$
{\mathcal J} {\mathcal F} \,(J^{2, \, 1}) = J_1 \sqcup J_2 \sqcup
J_3
$$
where $J_1$ are those flag datums corresponding to $\lambda_2 \neq
0$, $J_2$ are associated to the case $\lambda_2 = 0$ and
$\lambda_1 \neq 0$ while $J_3$ correspond to the case $\lambda_2 =
\lambda_1 = 0$.

In what follows we only provide details for the first case, namely
the one corresponding to $\lambda_2 \neq 0$ -- the other two cases
are settled using an analogous treatment. If we denote $d_{22} :=
d \in k$, then equations \equref{mui1}-\equref{mui3} hold if and
only if $f = 0$, $\delta = 2^{-1} \, u$ and $d_{21} =
\lambda_2^{-1} \lambda_1 d$. Thus, $J_1 \cong k \times k^* \times
k^2$ and the bijection is given such that the Jacobi flag datum
$(\Lambda, \, \Delta, \, f_0, \, u, \, \lambda, \, D) \in
{\mathcal J} {\mathcal F} \,(J^{2, \, 1})$ associated to
$(\lambda_1, \, \lambda_2, \, u, \, d) \in k \times k^* \times
k^2$ is given by
\begin{eqnarray}
&& \Lambda (1) = 1, \,\, \Lambda (x) = 0, \quad \lambda(1) =
\lambda_1, \, \, \lambda(x) = \lambda_2 \neq 0 \eqlabel{muicaa1} \\
&& \Delta = \left( \begin{array}{cc} 0 & 0 \\
0 & 2^{-1} u \end{array}\right) \quad
D = \left( \begin{array}{cc} - 2^{-1} \, \lambda_1 u & - 2^{-1} \, \lambda_2 u \\
\lambda_2^{-1} \lambda_1 d & d \end{array}\right) \quad f_0 = -
4^{-1} \, u^2 \eqlabel{muicaa2}
\end{eqnarray}
Now, an elementary computation shows that the equivalence relation
of \thref{clasdim1} given by \equref{ecfl1}-\equref{ecfl3} on the
Jacobi flag datums written for the set of triples $k \times k^*
\times k^2$ becomes: $(\lambda_1, \, \lambda_2, \, u, \, d) \equiv
(\lambda_1', \, \lambda_2', \, u', \, d')$ if and only if
$\lambda_1' = \lambda_1$, $\lambda_2' = \lambda_2$, $u' = u$ and
there exists $q \in k$ such that $d' = d + \lambda_2 q$. Since,
$\lambda_2 \neq 0$, there exists such a $q$, namely $q :=
\lambda_2^{-1} (d' - d)$. This shows that $(\lambda_1, \,
\lambda_2, \, u, \, d) \equiv (\lambda_1, \, \lambda_2, \, u, \,
0)$, for any $d\in k$ and the quotient set $k \times k^* \times
k^2/\equiv \,\, \cong \, k \times k^* \times k \times \{0\} \cong
k \times k^* \times k$, which is the first component of ${\mathcal
J}{\mathcal H}^{2} \, (k, \, J^{2, \, 1} )$ in formula
\equref{formmar}. The $3$-parameter Jacobi algebra $J^{3,
1}_{(\lambda_1, \, \lambda_2, \, u)}$ is precisely the unified
product $J^{2, \, 1} \ltimes k$ associated to the Jacobi flag
datum corresponding to $(\lambda_1, \, \lambda_2, \, u, \, 0)$ via
the formulas \equref{muicaa1}-\equref{muicaa2}, with the
multiplication and the bracket as defined by
\equref{terente3}-\equref{terente3b}. In a similar fashion we can prove that $J_2 \cong k^* \times k^3$
and the bijection is given such that the Jacobi flag datum
$(\Lambda, \, \Delta, \, f_0, \, u, \, \lambda, \, D) \in
{\mathcal J} {\mathcal F} \,(J^{2, \, 1})$ associated to
$(\lambda_1, \, u, \, f, \, d) \in k^* \times k^3$ is given by
\begin{eqnarray}
&& \Lambda (1) = 1, \,\, \Lambda (x) = 0, \quad \lambda(1) =
\lambda_1 \neq 0, \, \, \lambda(x) = 0 \eqlabel{muicaa12} \\
&& \Delta = \left( \begin{array}{cc} 0 & 0 \\
0 & 2^{-1} u \end{array}\right) \quad
D = \left( \begin{array}{cc} - 2^{-1} \, \lambda_1 u & 0 \\
d & 0 \end{array}\right) \quad f_0 = - 4^{-1} \, u^2 + f\, x
\eqlabel{muicaa22}
\end{eqnarray}
and $k^* \times k^3/\equiv \,\, \cong k^* \times k^2$ since
$(\lambda_1, \, u, \, f, \, d) \cong (\lambda_1', \, u', \, f', \,
d')$ if and only if $\lambda_1' = \lambda_1$, $u' = u$, $f' = f$
and there exists $q\in k$ such that $d' = d + \lambda_1 q$. The
Jacobi algebra $J^{3, 2}_{(\lambda_1, \, u, \, f)}$ is the unified
product $J^{2, \, 1} \ltimes k$ associated to $(\lambda_1, \, u,
\, f, \, d:= 0)$. Finally, one can show in an analogous manner
that $J_3 \cong (k^* \sqcup k^*) \, \sqcup \, k^4$ and the
corresponding Jacobi algebras are the last three families of Table
\ref{1}: $J^{3, 3}_{\delta}$, $J^{3, 4}_{u}$ and respectively
$J^{3, 5}_{(u, \, f, \, d_1, \, d_2)}$.
\end{proof}

\begin{remarks} \relabel{comentechi}
$(1)$ $J^{2, \, 1}$ is a Poisson algebra and hence we can also
compute the classifying object ${\mathcal P}{\mathcal H}^{2} \,
(k, \, J^{2, \, 1} )$. By testing which of the Jacobi algebras
listed in \prref{J21} are Poisson algebras, we obtain that
$$
{\mathcal P}{\mathcal H}^{2} \, (k, \, J^{2, \, 1} ) \, \cong \,
(k^* \times k) \, \sqcup \, k^* \sqcup k^* \, \sqcup \, k^3
$$
and the equivalence classes of all $3$-dimensional flag Poisson
algebras over $J^{2, \, 1}$ are the following four families of
Poisson algebras $J^{3, 1}_{(0, \, \lambda_2, \, u)}$, $J^{3,
3}_{\delta}$, $J^{3, 4}_{u}$ and $J^{3, 5}_{(u, \, f, \, 0, \,
d_2)}$ of Table \ref{1}.

$(2)$ Similar to \prref{J21} we can describe all flag Jacobi
algebras over any $2$-dimensional Jacobi algebra listed in
\exref{dim2}. In some cases the computations are straightforward:
for instance we can immediately see that  ${\mathcal J}{\mathcal
H}^{2} \, (k, \, J^{2, \, 2} ) = \varnothing$ and, if $k\neq k^2$,
then ${\mathcal J}{\mathcal H}^{2} \, (k, \, J^{2}_d ) =
\varnothing$, for all $d \in S \subset k \setminus k^2$. Indeed,
if $\Lambda : J^{2, \, 2} \to k$ is an algebra map then, $\Lambda
(x) = 0$ since $x^2 = 0$ in $J^{2, \, 2}$. By applying axiom (JF4)
for $a := x$ and $b := 1$, we obtain that $\Lambda(1) = 0$ and we
have reached a contradiction as $\Lambda$ is a unitary algebra
map. Thus, we obtained that there is no 3-dimensional Jacobi
algebra containing $J^{2, \, 2}$ or $J^{2}_d$ as Jacobi
subalgebras. The remaining two Jacobi algebras of \exref{dim2} can
be treated in a similar manner and are left to the reader.
\end{remarks}

The recursive algorithm can be continued in order to describe all
$4$-dimensional flag Jacobi algebras. For instance, computations
similar to those performed in \prref{J21} give the following:

\begin{example} \exlabel{una4dim}
Consider the Jacobi algebra $J^{3}_{11}$ described in Table $1$. Then:
$$
{\mathcal J}{\mathcal H}^{2} \, (k, \, J^{3}_{11} ) \, \cong
\, \{*\} \, \sqcup \, k \, \sqcup \, k^3 \, \sqcup \, k^2 \, \sqcup \, k^2 \, \sqcup \, (k^* \setminus \{1\} )
\, \sqcup \, k \, \sqcup \, k \, \sqcup \, k \, \sqcup \, k \, \sqcup \,
(k^* \setminus \{1\} )
$$
where $\{*\}$ is the singleton set. The equivalence classes of all 4-dimensional flag Jacobi
algebras over $J^{3}_{11}$ are the Jacobi algebras with basis
$\{1, \, x, \, y, \, z\}$ and the multiplication and the bracket defined below:
\begin{eqnarray*}
&J^{4,1}:& \quad x^{2} = x,\, y^{2} = 0,\, z^{2} = 0,\, xy=yx=0,\, zy=yz= 0,\, zx=xz= z,\\
&& \quad  [y,\,1] = y,\, [z,\, 1] =  2\,z,\, [z,\,x] = 2;\\
&J^{4,2}_{\gamma}:& \quad x^{2} = x,\, y^{2} = 0,\, z^{2} = 0,\, xy=yx=0,\, zy=yz= 0,\, zx=xz=\gamma\, y + z,\\
&& \quad  [y,\,1] = y,\, [z,\, 1] =  z,\, [z,\,x] = -\gamma\, y + z,\,\, {\rm where} \, \gamma \in k;\\
&J^{4,3}_{\alpha,\, u,\, v}:& \quad x^{2} = x,\, y^{2} = 0,\, xy=yx=0,\, zx=xz=\alpha\, - \alpha\, x + z, \, zy=yz= -\, \alpha\, y,\\
&& \quad z^{2} = \alpha^{2} + \alpha\,u + v\,x + uz,\, [y,\,1] = y,\, [z,\,y] = \alpha\, y,\,\, {\rm where} \, (\alpha, u, v) \in k^3;\\
&J^{4,4}_{\alpha,\, b}:& \quad x^{2} = x,\, y^{2}
= 0,\, xy=yx=0,\, zx=xz=0,\, \quad zy=yz= \alpha\, y, \\
&& \quad z^{2} = -\,\alpha^{2} +
\alpha^{2}\,x + 2\, \alpha\,z,\, [y,\,1] = y,\, [z,\,y] = b y,\,\, {\rm where} \,\, (\alpha, \, b) \in k^2;\\
&J^{4,5}_{\alpha, \, u}:& \quad x^{2} = x,\, y^{2}
= 0,\, xy=yx=0,\, zx=xz=0,\, \quad zy=yz= \alpha\, y, \\
&& \quad z^{2} = \alpha \, (\alpha - u) - \alpha(\alpha - u)\,x + u\, z,\, [y,\,1] = y,\, [z,\,y] = - \alpha \, y, \, {\rm where} \, (\alpha, u) \in k^2;\\
&J^{4,6}_{\lambda}:& \quad x^{2} = x,\, y^{2}
= 0,\, xy=yx=0,\, zx=xz= x,\, \quad zy=yz= 0,\, z^{2} = x,\\
&& \quad [y,\,1] = y,\, [z,\, 1] = -\, \lambda\,x + \lambda\, z,\,\, {\rm where} \,\, \lambda \in k^{*}-\{1\};\\
\end{eqnarray*}
\begin{eqnarray*}
&J^{4,7}_{a}:& \quad x^{2} = x,\, y^{2}
= 0,\, xy=yx=0,\, zx=xz= x,\, \quad zy=yz= 0,\, z^{2} = x,\\
&& \quad [y,\,1] = y,\, [z,\, 1] = - x + ay + z,\,\, {\rm where} \,\, a \in k;\\
&J^{4,8}_{b}:& \quad x^{2} = x,\, y^{2}
= 0,\, xy=yx=0,\, zx=xz= x,\, \quad zy=yz= 0,\, z^{2} = x + by,\\
&& \quad [y,\,1] = y,\, [z,\, 1] = -\,2^{-1}\,x + 2^{-1}\, z,\,\, {\rm where} \,\, b \in k;\\
&J^{4,9}_{a}:& \quad x^{2} = x,\, y^{2}
= 0,\, xy=yx=0,\, zx=xz= x,\, zy=yz= 2^{-1}\,y,\\
&& \quad z^{2} = -4^{-1} + 4^{-1}\,x + ay +z,\, [y,\,1] = y, \, [z,\,y]=-\,2^{-1}y,\\
&& \quad [z,\, 1] = -\,4^{-1}-\,4^{-1}\,x + 2^{-1}\, z,\,\, {\rm where} \, a \in k;\\
&J^{4,10}_{a}:& \quad x^{2} = x,\, y^{2}
= 0,\, xy=yx=0,\, zx=xz= x,\, zy=yz= 2^{-1}\,y,\\
&& \quad z^{2} = -4^{-1} + 4^{-1}\,x + z,\, [y,\,1] = y,\, [z,\, y] = -2^{-1}\,y\\
&& \quad [z,\, 1] = -\,2^{-1} - 2^{-1}\,x + ay + z,\,\, {\rm where} \, a \in k;\\
&J^{4,11}_{\lambda}:& \quad x^{2} = x,\, y^{2}
= 0,\, xy=yx=0,\, zx=xz= x,\, zy=yz= 2^{-1}\,y,\\
&& \quad z^{2} = -4^{-1} + 4^{-1}\,x + z,\, [y,\,1] = y,\, [z,\, y] = -2^{-1}\,y\\
&& \quad [z,\, 1] = -\,2^{-1}\,\lambda - 2^{-1}\, \lambda\,x + \lambda\, z,\,\, {\rm where} \,\, \lambda \in k^{*}-\{1\}.\\
\end{eqnarray*}
\end{example}

\section{Bicrossed products for Poisson algebras. Applications}\selabel{cazurispeciale}
In this section we deal with a special case of the unified product
for Poisson algebras, namely the bicrossed product and its main
applications. Throughout this section the associative algebras are
commutative but not necessarily unital. Let $P$ and $Q$ be two
given Poisson algebras. A Poisson algebra $R$ \emph{factorizes}
through $P$ and $Q$ if $P$, $Q$ are Poisson subalgebras of $R$
such that $R = P + Q$ and $P \cap Q = \{0\}$. In this case $Q$ is
called a \emph{Poisson complement} of $P$ in $R$ or a
\emph{$P$-complement of $R$}.

We recall from \cite{LW, majid} that a \emph{matched pair} of Lie
algebras is a system $\bigl(P, \, Q, \, \leftharpoonup, \,
\rightharpoonup \bigl)$ consisting of two Lie algebras $P$ and $Q$
and two bilinear maps $\leftharpoonup: Q \times P \to Q$ and
$\rightharpoonup : Q \times P \to P$ such that $(Q, \,
\leftharpoonup)$ is a right Lie $P$-module, $(P, \,
\rightharpoonup)$ is a left Lie $Q$-module such that for any $a$,
$b\in P$ and $x$, $y\in Q$
\begin{eqnarray}
x \rightharpoonup [a, \, b]_P &=& [x \rightharpoonup a, \, b]_P +
[a, \, x \rightharpoonup b]_P + (x \leftharpoonup a)
\rightharpoonup b - (x \leftharpoonup b) \rightharpoonup a \eqlabel{mp2la}\\
\{x, \, y \}_Q \leftharpoonup a &=& \{x, \, y \leftharpoonup a
\}_Q + \{x \leftharpoonup a, \, y \}_Q + x \leftharpoonup (y
\rightharpoonup a) - y \leftharpoonup (x \rightharpoonup a)
\eqlabel{mp2la}
\end{eqnarray}

The associative algebra counterpart of the matched pair was
introduced in \cite[Definition 3.6]{am-2013d} for non-commutative
algebras. A slightly more general definition can be found in
\cite{a-2013} where the unitary assumption on the algebras is
dropped. In the case of commutative algebras we arrive at the
following simplified definition which originates in \cite{bai}: a
\emph{matched pair} of commutative algebras is a system $\bigl(P,
\, Q, \, \triangleleft, \, \triangleright)$ consisting of two
commutative algebras $P$ and $Q$ and two bilinear maps
$\triangleleft : Q \times P \to Q$ and $\triangleright : Q \times
P \to P$ such that $(Q, \, \triangleleft)$ is a right $P$-module,
$(P, \, \triangleright)$ is a left $Q$-module satisfying the
following compatibility conditions for any $a$, $b\in P$ and $x$,
$y\in Q$:
\begin{eqnarray*}
(x y) \triangleleft a = x \triangleleft (y
\triangleright a) + x (y \triangleleft a),  \qquad
x \triangleright (ab) = a (x \triangleright b) + (x
\triangleleft b) \triangleright a \eqlabel{mp1b}
\end{eqnarray*}
It is worth pointing out that these axioms are exactly what
remains from (A1)-(A6) of \deref{comexdatum} if we ask for
$\Omega(P, Q) = \bigl(\triangleleft, \, \triangleright, \, f := 0,
\, \cdot \bigl)$ to be an algebra extending system of the algebra
$P$ by a vector space $Q$, where $f := 0$ is the trivial map. The
concept of a matched pair of Poisson algebras was recently
introduced in \cite[Theorem 1]{NiBai} - we recall the definition
following our notations and terminology since it will be a special
case of the axioms (P1)-(P10) appearing in \coref{1}.

\begin{definition}\delabel{matchedposs}
A \emph{matched pair} of Poisson algebras is a system $(P, \, Q,
\, \triangleleft, \, \triangleright, \, \leftharpoonup, \,
\rightharpoonup)$ consisting of two Poisson algebras $P$ and $Q$
and four bilinear maps
\begin{eqnarray*}
\triangleleft : Q \times P \to Q, \quad \triangleright : Q \times
P \to P, \quad \leftharpoonup: Q \times P \to Q, \quad
\rightharpoonup : Q \times P \to P
\end{eqnarray*}
such that $\bigl(P, \, Q, \, \triangleleft, \, \triangleright)$ is
a matched pair of commutative algebras, $\bigl(P, \, Q, \,
\leftharpoonup, \, \rightharpoonup \bigl)$ is a matched pair of
Lie algebras satisfying the following compatibility conditions for
any $a$, $b\in P$ and $x$, $y\in Q$:
\begin{eqnarray}
x \rightharpoonup (ab) &=& (x \rightharpoonup a) \, b + (x
\leftharpoonup a) \triangleright b + a \, (x \rightharpoonup b)
+ (x \leftharpoonup b) \triangleright a  \eqlabel{mp2p} \\
x \leftharpoonup (ab) &=& (x \leftharpoonup a)
\triangleleft b + (x \leftharpoonup b) \triangleleft a \eqlabel{mp3p} \\
x \, \triangleright [a, \, b]_P &=& [x \triangleright a, \, b]_P +
(x \triangleleft a) \rightharpoonup b - a \, (x
\rightharpoonup b) - (x \leftharpoonup b) \triangleright a \eqlabel{mp4p} \\
x  \triangleleft [a, \, b]_P &=& (x \triangleleft a)
\leftharpoonup b - (x  \leftharpoonup b) \triangleleft a \eqlabel{mp5p} \\
\{x, \, y\}_Q \triangleright a  &=&
x \triangleright (y \rightharpoonup a) - y \rightharpoonup (x \triangleright a) \eqlabel{mp6p} \\
 \{x, \, y\}_Q \triangleleft a &=& \{x \triangleleft a, \,
y \}_Q - y \leftharpoonup (x \triangleright a) + x \triangleleft
(y
\rightharpoonup a) + (y \leftharpoonup a)  x \eqlabel{mp7p} \\
(x y) \rightharpoonup a &=& x \triangleright (y
\rightharpoonup a) + y \triangleright (x \rightharpoonup a) \eqlabel{mp8p} \\
(x  y) \leftharpoonup  a &=&  x  (y \leftharpoonup a) + (x
\leftharpoonup a)  y + x \triangleleft (y \rightharpoonup a) + y
\triangleleft (x \rightharpoonup a) \eqlabel{mp9p}
\end{eqnarray}
\end{definition}

The axioms defining a matched pair of Poisson algebras in
\deref{matchedposs} are derived from axioms (P1)-(P10) of
\coref{1} if we ask for $\Upsilon (P, Q) = \bigl(\triangleleft, \,
\triangleright, \, f :=0, \, \cdot, \, \leftharpoonup, \,
\rightharpoonup, \, \theta :=0, \, \{-, \, - \} \bigl)$ to be a
Poisson extending structure of the Poisson algebra $P$ through
$Q$, where the cocycles $f$ and $\theta$ are both trivial: $f (x,
y) = \theta (x, y) := 0$, for all $x$, $y \in Q$.

Let $\bigl(P, \, Q, \, \triangleleft, \, \triangleright, \, \,
\leftharpoonup, \, \rightharpoonup \bigl)$ be a matched pair of
Poisson algebras. Then $P \bowtie Q := P \times Q$ is a Poisson
algebra with the multiplication and the bracket defined for any
$a$, $b \in P$ and $x$, $y \in Q$ by:
\begin{eqnarray}
(a, \, x) \bullet (b, \, y) &:=& \bigl( ab + x \triangleright b +
y \triangleright a, \,\, x\triangleleft b + y \triangleleft a  +
x y \bigl) \eqlabel{multbpp1} \\
\left [(a, x), \, (b, y) \right ] &:=& \bigl( [a, \, b]_P + x
\rightharpoonup b - y \rightharpoonup a, \,\, x\leftharpoonup b -
y \leftharpoonup a + \{x, \, y \}_Q \bigl) \eqlabel{multbpp2}
\end{eqnarray}
called the \emph{bicrossed product} associated to the matched pair
$\bigl(P, \, Q, \, \triangleleft, \, \triangleright, \, \,
\leftharpoonup, \, \rightharpoonup \bigl)$. The bicrossed product
of Poisson algebras is exactly the unified product $P \ltimes Q$
associated to a Poisson extending structure of the Poisson algebra
$P$ through $Q$ having both cocyles trivial $f = \theta :=0$. The
bicrossed product is the construction responsible for the
so-called \emph{factorization problem} which formulated at the
level of Poisson algebras comes down to the following: for two
given Poisson algebras $P$ and $Q$ describe and classify all
Poisson algebras which factorize through $P$ and $Q$. The next
result is \cite[Theorem 1]{NiBai}. For the reader's convenience we
also include a short and different proof of it based on
\prref{classif}.

\begin{proposition} \prlabel{factprob}
Let $P$ and $Q$ be two given Poisson algebras. A Poisson algebra
$R$ factorizes trough $P$ and $Q$ if and only if there exists
$\bigl(P, \, Q, \, \triangleleft, \, \triangleright, \,
\leftharpoonup, \, \rightharpoonup \bigl)$ a matched pair of
Poisson algebras such that $R \cong P \bowtie Q$, an isomorphism
of Poisson algebras.
\end{proposition}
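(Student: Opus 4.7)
The plan is to obtain both implications as a direct corollary of the Poisson version of \prref{classif} (the analogue for Poisson algebras noted in \reref{possversES}) together with the observation that a matched pair is precisely an extending structure with trivial cocycles.

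For the easy direction, assume $R \cong P \bowtie Q$. Identifying $P$ with $P \times \{0\}$ and $Q$ with $\{0\} \times Q$ inside $P \bowtie Q$, the multiplication and bracket formulas \equref{multbpp1}--\equref{multbpp2} restrict to the original Poisson structures on $P$ and $Q$ respectively, so both are Poisson subalgebras of $R$. By construction $R = P + Q$ and $P \cap Q = \{0\}$, hence $R$ factorizes through $P$ and $Q$.

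For the converse, suppose $R$ factorizes through $P$ and $Q$. Since $R = P \oplus Q$ as vector spaces, $Q$ is a linear complement of $P$ in $R$ and the canonical projection $p : R \to P$ with $\Ker(p) = Q$ is a retraction of the inclusion $P \hookrightarrow R$. Applying the Poisson analogue of \prref{classif} to the retraction $p$ produces a Poisson extending structure $\Upsilon(P, Q) = \bigl(\triangleleft, \, \triangleright, \, f, \, \cdot, \, \leftharpoonup, \, \rightharpoonup, \, \theta, \, \{-, \, -\}\bigl)$ of $P$ through $Q$ and an isomorphism of Poisson algebras $\varphi : P \ltimes Q \to R$, $\varphi(a, x) := a + x$. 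The key observation now is that $Q$ is itself a \emph{Poisson subalgebra} of $R$ (not merely a complementary subspace): for all $x$, $y \in Q$ we have $x \ast_R y \in Q$ and $[x, \, y]_R \in Q$, so the formulas
\[
f(x,y) = p(x \ast_R y) = 0, \qquad \theta(x,y) = p([x, \, y]_R) = 0
\]
show that both cocycles vanish. Consequently $x \cdot y = x \ast_R y$ and $\{x, \, y\} = [x, \, y]_R$ are just the original Poisson operations on $Q$, and the remaining bilinear maps $\triangleleft$, $\triangleright$, $\leftharpoonup$, $\rightharpoonup$ become precisely the structure maps of a matched pair of Poisson algebras: compatibilities (P1)--(P10) of \coref{1} collapse to \equref{mp2p}--\equref{mp9p} once $f = \theta = 0$. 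Hence the unified product $P \ltimes Q$ coincides with the bicrossed product $P \bowtie Q$, and $\varphi$ furnishes the desired Poisson algebra isomorphism $R \cong P \bowtie Q$.

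The whole argument is essentially bookkeeping; the only point that requires care is verifying that with $f = \theta = 0$ the axiom list (P0)--(P10) reduces \emph{exactly} to the matched pair axioms of \deref{matchedposs}, which amounts to noting that (P5) and (P6) simplify to \equref{mp6p} (the terms involving $\theta$ and $f$ drop out) and that (P9) and (P10) become the trivial identities automatic from the remaining axioms. No genuine obstacle arises, since all the heavy lifting has already been carried out in \thref{1}, \coref{1} and \prref{classif}.
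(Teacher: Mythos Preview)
Your proof is correct and follows essentially the same approach as the paper: use the canonical retraction $p:R\to P$ with kernel $Q$, invoke \prref{classif}, and observe that $Q$ being a Poisson subalgebra forces $f=\theta=0$ so that the unified product collapses to the bicrossed product. One tiny slip in your final paragraph: with $f=\theta=0$, axiom (P10) does not become a triviality but rather the Leibniz law for $Q$, which holds because $Q$ is assumed Poisson; and (P6) reduces to \equref{mp7p}, not \equref{mp6p}.
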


\begin{proof}
First we observe that any bicrossed product $P \bowtie Q$
factorizes through $P \cong P \times \{0\}$ and $Q \cong \{0\}
\times Q$ and, via this identification, $P$ and $Q$ are Poisson
subalgebras of $P \bowtie Q$ such that $P \bowtie Q = P + Q$ and
$P \cap Q = \{0\}$. Conversely, assume that $R = (R, \cdot_R, [-,
\, -]_R)$ is a Poisson algebra that factorizes through $P$ and
$Q$. Let $\iota : P \to R$ be the inclusion map and $\pi: R \to P$
the canonical $k$-linear retraction of $\iota$, i.e. $\pi (p + q)
: = p$. Since $Q = {\rm Ker}(\pi)$ is a Poisson subalgebra of $R$
we obtain that the cocycles $f$ and $\theta : Q\times Q \to P$
constructed in the proof of \prref{classif} are both trivial: i.e.
$f(x, y) := \pi (x\cdot_R y) = 0$ and $\theta (x, y) := \pi ([x,
y]_R) = 0$, for all $x$, $y\in Q$. Thus, the actions
$(\triangleleft, \, \triangleright, \, \leftharpoonup, \,
\rightharpoonup)$ defined for any $a \in P$ and $x\in Q$ by:
\begin{eqnarray*}
x \triangleright a &:=& \pi (x \cdot_R a), \qquad \, x
\triangleleft a
:= x\cdot_R a  - \pi (x\cdot_R a) \\
x \rightharpoonup a &:=& \pi \bigl([x, \, a]_R \bigl), \quad x
\leftharpoonup a := [x, \, a]_R - \pi \bigl([x, \, a]_R\bigl)
\end{eqnarray*}
make $\bigl(P, \, Q, \, \triangleleft, \, \triangleright, \,
\leftharpoonup, \, \rightharpoonup \bigl)$ a matched pair of
Poisson algebras while $\varphi: P \bowtie Q \to R$, $\varphi (a,
x): = a + x$ becomes an isomorphism of Poisson algebras.
\end{proof}

As we have seen in the proof of \prref{factprob}, if a Poisson
algebra $R$ factorizes through $P$ and $Q$ then we can construct a
matched pair of Poisson algebras as follows:
\begin{eqnarray}
x \triangleright a + x \triangleleft a = x a, \qquad x
\rightharpoonup a + x \leftharpoonup a = [x, \, a]
\end{eqnarray}
for all $a \in P$ and $x \in Q$. Throughout, the above matched
pair will be called \emph{the canonical matched pair} associated
with the factorization of $R$ through $P$ and $Q$.
\prref{factprob} allows for a computational reformulation of the
factorization problem as follows: for two given Poisson algebras
$P$ and $Q$ describe and classify all bicrossed products $P
\bowtie Q$ associated to all possible matched pairs $\bigl(P, \,
Q, \, \triangleleft, \, \triangleright, \, \leftharpoonup, \,
\rightharpoonup \bigl)$. The problem is far from being a trivial
one.

\begin{example}\exlabel{anaex1}
Let $k_{0}$ be the trivial $1$-dimensional Poisson algebra with
basis $\{X\}$ and $\mathcal{H}$ the $3$-dimensional Heisenberg Lie
algebra with basis $\{H_1, H_2, H_3\}$ and the bracket defined by
$[H_1,\, H_2] = H_3$. $\mathcal{H}$ admits a Poisson algebra
structure with the associative multiplication given by $H_{1}^{2}
= H_3$. It can be easily seen by a straightforward computation
that the bicrossed products corresponding to all matched pairs of
Poisson algebras $\bigl(k_{0},\, \mathcal{H}, \, \triangleleft, \,
\triangleright, \, \leftharpoonup, \, \rightharpoonup \bigl)$ are
the $4$-dimensional Poisson algebras with basis $\{X,\, H_{1}, \,
H_{2}, \, H_{3}\}$ listed below:
\begin{eqnarray*}
\mathcal{H}^{1}_{(\beta, \, \alpha, \, \mu, \, \eta)}: &&
X^{2}=0,\, H_{1}^{2} = H_{3}, \, XH_{1} = H_{1}X = \alpha\,
H_{3},\, XH_{2} = H_{2}X = \beta\, H_{3}\\
&& [H_{1}, \, H_{2}] = H_{3}, \, [H_{1}, \, X] = \mu\, H_{3}, \,
[H_{2}, \, X] = \eta H_{3}, \,\, {\rm for} \,\, (\beta, \alpha, \mu, \eta) \in k^{*} \times k^{3}\\
\mathcal{H}^{2}_{(\xi,\, \mu,\, \eta)}: && X^{2}=0,\, H_{1}^{2} =
H_{3}, \, [H_{1}, \, H_{2}] = H_{3}, \, [H_{1}, \, X] = \xi\, X +
\mu\, H_{2} + \eta\, H_{3}\\
&& {\rm for} \,\, (\xi,\, \mu,\, \eta) \in k^{3}\\
\end{eqnarray*}
\begin{eqnarray*}
\mathcal{H}^{3}_{(\eta,\, \mu,\, \tau)}: && X^{2}=0,\, H_{1}^{2} =
H_{3}, \,  [H_{1}, \, H_{2}] = H_{3}, \, [H_{2}, \, X] = \eta\,
H_{3}\\
&& [H_{1}, \, X] = \mu\, H_{2} + \tau\, H_{3}, \,\, {\rm for} \,\, (\eta,\, \mu,\, \tau) \in k^{*} \times k^{2}\\
\mathcal{H}^{4}_{(\alpha,\, \mu, \, \tau, \eta)}: && X^{2}=0,\,
H_{1}^{2} = H_{3}, \, XH_{1}=H_{1}X = \alpha\, H_{3}, \, [H_{1},
\, H_{2}] = H_{3} \\
&& [H_{1}, \, X] = \mu\, H_{2} + \tau\, H_{3}, \, [H_{2}, \, X] =
\eta\, H_{3}, \,\, {\rm for} \,\, (\alpha,\, \mu, \, \tau, \eta)
\in k^{*} \times k^{3}\\
\mathcal{H}^{5}_{(\xi,\, \mu,\, \eta)}: && X^{2}=0,\, H_{1}^{2} =
H_{3}, \, [H_{1}, \, H_{2}] = H_{3}\\
&& \left[H_{1},\, X\right] = \mu \, H_{3},\, [H_{2},\, X]= \xi \,
X + \eta \,H_{3},
\,\,  {\rm for} \,\, (\xi,\, \mu,\, \eta) \in k^{*} \times k^{2}\\
\mathcal{H}^{6}_{(\xi,\, \gamma,\, \mu,\, \eta)}: && X^{2}=0,\,
H_{1}^{2} = H_{3}, \, [H_{1}, \, H_{2}] = H_{3}, \, \left[H_{1},\,
X\right] = \xi \, X + \xi \, \gamma^{-1} \, \eta \, H_{2} + \mu \,
H_{3} \\
&& [H_{2}, X] = \gamma \, X + \eta \, H_{2} + \xi^{-1} \, (\xi +
1)\, \eta \, H_{3}, \,\,  {\rm for} \,\, (\xi,\, \gamma,\, \mu,\,
\eta) \in (k^{*})^{2} \times k^{2}
\end{eqnarray*}
\end{example}

The rest of the section we deal with the converse of the
factorization problem, called the bicrossed descent (or the
classification of complements) problem and whose statement was
given in the introduction. First we need to introduce the
following concept:

\begin{definition} \delabel{defmap}
Let $\bigl(P, \, Q, \, \triangleleft, \, \triangleright, \,
\leftharpoonup, \, \rightharpoonup \bigl)$ be a matched pair of
Poisson algebras. A $k$-linear map $r: Q \to P$ is called
a \emph{deformation map} of the above matched pair if the
following compatibilities hold for all $p$, $q \in Q$:
\begin{eqnarray}
r(p) r(q) - r(pq) &=& r \bigl(q \triangleleft r(p) + p
\triangleleft r(q)\bigl) - q \triangleright r(p) - p
\triangleright r(q) \eqlabel{ana1}\\
r\bigl([p,\,q]\bigl) - \bigl[r(p),\, r(q)\bigl] &=& r\bigl(q
\leftharpoonup r(p) - p \leftharpoonup r(q)\bigl) + p
\rightharpoonup r(q) - q \rightharpoonup r(p \eqlabel{ana2})
\end{eqnarray}
\end{definition}

We denote by $\mathcal{D}\mathcal{M}\bigl(P,\, Q ~|~
(\triangleleft, \, \triangleright, \, \leftharpoonup, \,
\rightharpoonup)\bigl)$ the set of all deformation maps of the
matched pair $\bigl(P, \, Q, \, \triangleleft, \, \triangleright,
\, \leftharpoonup, \, \rightharpoonup \bigl)$.

\begin{example}\exlabel{defmap1}
Consider the following matched pair between $k_{0}$ and
$\mathcal{H}$:
\begin{eqnarray*}
 H_{1} \triangleleft X = H_{3}, \,\,\, H_{2}
\triangleleft X = H_{3}, \,\,\, H_{1} \leftharpoonup X = H_{3},
\,\,\, H_{2} \leftharpoonup X = H_{3}
\end{eqnarray*}
where the undefined actions are all equal to $0$. The
corresponding bicrossed product is the Poisson algebra
$\mathcal{H}^{1}_{(1, \, 1,\, 1, \, 1)}$ from \exref{anaex1}. Any
deformation map associated to the above matched pair is given as
follows:
$$
r_{(a_{1},\,a_{2})} : \mathcal{H} \to k_{0}, \quad
r_{(a_{1},\,a_{2})}(h_{1}) = a_{1} \, X, \quad
r_{(a_{1},\,a_{2})}(h_{2}) = a_{2}\, X, \quad
r_{(a_{1},\,a_{2})}(h_{3}) = 0
$$
for some $a_{1}$, $a_{2} \in k$.
\end{example}

The next result shows that to any deformation map $r$ we can
associate a new Poisson algebra called the $r$-deformation and,
moreover, all complements of a given Poisson algebra extension $P
\subset R$ can be described as $r$-deformations of a given
complement.

\begin{theorem}\thlabel{comp}
Let $P$ be a Poisson subalgebra of $R$ and $Q$ a given
$P$-complement of $R$ with the associated canonical matched pair
$\bigl(P, \, Q, \, \triangleleft, \, \triangleright, \,
\leftharpoonup, \, \rightharpoonup \bigl)$.

$(1)$ Let $r: Q \to P$ be a deformation map of the above matched
pair. Then $Q_{r}: = Q$, as a vector space, with the new Poisson
algebra structure defined for any $q$, $t \in Q$ by:
\begin{eqnarray}
q \cdot_{r} t &=& qt + t \triangleleft r(q) + q \triangleleft
r(t) \eqlabel{srt1}\\
\left[q,\, t\right]_{r} &=& [q, \, t] + q \leftharpoonup r(t) - t
\leftharpoonup r(q)\eqlabel{srt2}
\end{eqnarray}
is a Poisson algebra called the $r$-deformation of $Q$ and
$Q_{r}$ is a $P$-complement of $R$.

$(2)$ $\overline{Q}$ is a $P$-complement of $R$ if and only if
there exists an isomorphism of Poisson algebras $\overline{Q}
\cong Q_{r}$, for some deformation map $r: Q \to P$ of the above
canonical matched pair.
\end{theorem}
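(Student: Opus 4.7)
The central observation is that $k$-linear sections of the projection $\pi_Q : R = P \oplus Q \to Q$ are in bijection with linear maps $r : Q \to P$, via $\sigma_r(q) := q + r(q)$. My plan is to prove that $\sigma_r(Q)$ is a Poisson subalgebra of $R$ if and only if $r$ is a deformation map, in which case the transport of the Poisson structure of $\sigma_r(Q)$ along $\sigma_r^{-1}$ yields precisely the operations $\cdot_r$ and $[-,-]_r$ of \equref{srt1}--\equref{srt2}. This simultaneously gives part (1) and the existence half of part (2); the classification then follows because every $P$-complement of $R$ arises as the graph of some such $r$.

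For part (1), I would expand the product $\sigma_r(q) \cdot_R \sigma_r(t) = (q+r(q))(t+r(t))$ and decompose each mixed term $q\,r(t) \in R$ as $q \triangleright r(t) + q \triangleleft r(t)$ using the canonical matched pair. Collecting terms, the $Q$-component equals $qt + q \triangleleft r(t) + t \triangleleft r(q)$---which is exactly the $q \cdot_r t$ of \equref{srt1}---while the $P$-component equals $q \triangleright r(t) + t \triangleright r(q) + r(q)r(t)$, and the deformation axiom \equref{ana1} is precisely the assertion that this $P$-component equals $r(q \cdot_r t)$. Hence $\sigma_r(q)\sigma_r(t) = \sigma_r(q \cdot_r t) \in \sigma_r(Q)$. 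An entirely parallel calculation using $[x,a]_R = x \rightharpoonup a + x \leftharpoonup a$ shows that $[\sigma_r(q),\sigma_r(t)]_R = \sigma_r([q,t]_r)$, with \equref{ana2} being precisely what is required for the $P$-component to match. Since $\sigma_r$ is an injective linear map whose image is a Poisson subalgebra of $R$, the Poisson axioms for $(Q_r, \cdot_r, [-,-]_r)$ are inherited automatically. Finally, $P \cap \sigma_r(Q) = \{0\}$ (because $q + r(q) \in P$ forces $q = 0$) and $P + \sigma_r(Q) = R$ (from the identity $p + q = (p - r(q)) + \sigma_r(q)$), so $\sigma_r(Q) \cong Q_r$ is a $P$-complement of $R$.

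For part (2), let $\overline{Q}$ be any $P$-complement and consider the restriction $\pi_Q|_{\overline{Q}} : \overline{Q} \to Q$. Its kernel is $\overline{Q} \cap P = \{0\}$, and it is surjective since any $q \in Q \subset R$ admits a decomposition $q = p' + q'$ with $p' \in P$ and $q' \in \overline{Q}$, giving $\pi_Q(q') = q$. Letting $\sigma : Q \to \overline{Q}$ be the inverse and setting $r(q) := \sigma(q) - q \in P$, one has $\sigma = \sigma_r$ with image $\overline{Q}$; reading the computation of part (1) in reverse, the closedness of $\overline{Q}$ under the ambient Poisson operations forces \equref{ana1}--\equref{ana2} to hold, so $r$ is a deformation map and $\sigma_r : Q_r \to \overline{Q}$ is an isomorphism of Poisson algebras. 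The main obstacle is merely the bookkeeping in the $P/Q$-decomposition of $\sigma_r(q) \cdot_R \sigma_r(t)$ and $[\sigma_r(q), \sigma_r(t)]_R$; no appeal to the individual matched-pair axioms (P1)--(P10) beyond the identities $xa = x \triangleright a + x \triangleleft a$ and $[x,a] = x \rightharpoonup a + x \leftharpoonup a$ is required, because the Poisson axioms on $Q_r$ are inherited from those of $R$ and need not be verified directly.
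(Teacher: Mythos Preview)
Your proposal is correct and follows essentially the same approach as the paper: both identify a complement with the graph $\{(r(q),q)\}$ of a linear map $r:Q\to P$ inside $R\cong P\bowtie Q$, verify that closedness of this graph under the Poisson operations is equivalent to the deformation-map axioms \equref{ana1}--\equref{ana2}, and transport the structure back along the bijection to obtain $Q_r$. Your treatment of part~(2) via $\pi_Q|_{\overline{Q}}$ is a slightly more streamlined packaging of the paper's argument with the four maps $a,b,c,d$, but the content is the same.
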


\begin{proof}
Let $P \bowtie Q$ be the bicrossed product associated to the
canonical matched pair $\bigl(P, \, Q, \, \triangleleft, \,
\triangleright, \, \leftharpoonup, \, \rightharpoonup \bigl)$. It
follows form \prref{factprob} that $R \cong P \bowtie Q$ as
Poisson algebras.

$(1)$ Although this claim can be proven by a very long but
straightforward computation, we will provide a different and more
natural approach. Given a deformation map $r: Q \to P$, we
consider $f_{r}: Q \to  P \bowtie Q $ to be the $k$-linear map
defined for all $q \in Q$ by:
$$f_{r}(q) = \bigl(r(q), \, q\bigl)$$
It turns out that $\widetilde{Q} : = {\rm Im}(f_{r})$ is a $P$
complement of $R \cong P \bowtie Q$. We start by proving that $\widetilde{Q}$ is a
Poisson subalgebra of $P \bowtie Q$. Indeed, for all $p$, $q \in
Q$ we have:
\begin{eqnarray*}
\bigl(r(p),\, p\bigl) \bigl(r(q), \, q\bigl) &=&
\bigl(\underline{r(p)r(q) + p \triangleright r(q) + q
\triangleright r(p)}, \, p \triangleleft
r(q) + q \triangleleft r(p) + pq\bigl)\\
&\stackrel{\equref{srt1}}{=}& \bigl(r(pq + p \triangleleft r(q) +
q \triangleleft r(p)),\, p \triangleleft r(q) + q \triangleleft
r(p) + pq\bigl)\\
\left[(r(p),\, p),\, (r(q),\, q)\right] &=& \bigl( \,
\underline{[r(p),\, r(q)] + p \rightharpoonup r(q) - q
\rightharpoonup r(p)}, \\
&& p \leftharpoonup r(q) - q \leftharpoonup r(p) + [p, \, q] \, \bigl)\\
&\stackrel{\equref{srt2}}{=}& \bigl(r([p, q] + p \leftharpoonup
r(q) - q \leftharpoonup r(p)), p \leftharpoonup r(q) - q
\leftharpoonup r(p) + [p, q]\bigl)
\end{eqnarray*}
Therefore $\widetilde{Q}$ is a Poisson subalgebra of $P \bowtie
Q$. Consider now $(p,\, q) \in P \cap \widetilde{Q}$. Since in
particular we have $(p,\, q) \in \widetilde{Q}$ then $p = r(q)$.
As we also have $(r(q),\, q) \in P$ we obtain $q = 0$ and thus $P
\cap \widetilde{Q} = \{0\}$. Furthermore, for any $(p, \, q) \in R
= P \bowtie Q$ we can write $(p, \, q) = (p - r(q),\,0) + (r(q),
\, q) \in P + \widetilde{Q}$. Hence, we have proved that
$\widetilde{Q}$ is a $P$-complement of $P \bowtie Q$. We are left to prove
that $Q_{r}$ and $\widetilde{Q}$ are isomorphic as Poisson
algebras. To this end, we denote by $\widetilde{f}_{r}$ the linear
isomorphism from $Q$ to $\widetilde{Q}$ induced by $f_{r}$. As we
will see, $\widetilde{f}_{r}$ is a Poisson algebra map if we
consider $Q$ endowed with the Poisson structures given by
\equref{srt1} and \equref{srt2}. Indeed, for all $q$, $t \in Q$ we
have:
\begin{eqnarray*}
\overline{f}_{r}(q \cdot_{r} t) &\stackrel{\equref{srt1}}{=}&
\overline{f}_{r}(qt + t \triangleleft r(q) + q \triangleleft
r(t))\\
&=& \bigl(\underline{r(qt + t \triangleleft r(q) + q \triangleleft
r(t))}, \,
qt + t \triangleleft r(q) + q \triangleleft r(t)\bigl)\\
&\stackrel{\equref{ana1}}{=}& \bigl(r(q)r(t) + q \triangleright
r(t) + t \triangleright r(q),\, qt + t \triangleleft r(q) + q
\triangleleft r(t)\bigl)\\
&\stackrel{\equref{multbpp1}}{=}& \bigl(r(q), \, q\bigl)
\bigl(r(t), \, t\bigl) = \overline{f}_{r}(q) \overline{f}_{r}(t)
\end{eqnarray*}
and
\begin{eqnarray*}
\overline{f}_{r}\bigl([q,\, t]_{r}\bigl)
&\stackrel{\equref{srt1}}{=}& \overline{f}_{r} \bigl([q, \, t] + q
\leftharpoonup r(t) - t \leftharpoonup r(q)\bigl)\\
&=& \bigl(\underline{r([q, \, t] + q \leftharpoonup r(t) - t
\leftharpoonup r(q))},\, [q, \, t] + q \leftharpoonup r(t) - t
\leftharpoonup r(q)\bigl)\\
&\stackrel{\equref{ana2}}{=}& \bigl(\left[r(q), \, r(t)\right] + q
\rightharpoonup r(t) - t \rightharpoonup r(q),\, [q, \, t] + q
\leftharpoonup r(t) - t \leftharpoonup r(q)\bigl)\\
&\stackrel{\equref{multbpp2}}{=}& \left[\bigl(r(q),\, q\bigl), \,
\bigl(r(t), \, t\bigl) \right] = \left[\overline{f}_{r}(q),\,
\overline{f}_{r}(t)\right]
\end{eqnarray*}
Hence we can conclude that $Q_{r}$ is a Poisson algebra and this
finishes the proof.

$(2)$ Let $\overline{Q}$ be an arbitrary $P$-complement of $R$. As
$R = P \oplus Q = P \oplus \overline{Q}$ we can find four
$k$-linear maps:
$$
a: Q \to P, \qquad b: Q \to \overline{Q}, \qquad
c: \overline{Q}
\to P, \qquad d: \overline{Q} \to Q
$$
such that for all $q \in Q$ and $t \in \overline{Q}$ we have:
\begin{eqnarray}
q = a(q) \oplus b(q), \qquad t = c(t) \oplus d(t) \eqlabel{ana3}
\end{eqnarray}
It follows by an easy computation that $b: Q \to \overline{Q}$ is
an isomorphism of vector spaces. We will denote by $\widetilde{b}:
Q \to P \bowtie Q$ the composition $\widetilde{b} := i \circ b$,
where $i: \overline{Q} \to R = P \bowtie Q$ is the canonical
inclusion. Thus, by \equref{ana3} we have $\widetilde{b}(t) =
(-a(t), \, t)$ for all $t \in Q$. We will prove that $r := -a$ is
a deformation map and $\overline{Q} \cong Q_{r}$. Indeed,
$\overline{Q} = {\rm Im}(b) = {\rm Im} (\widetilde{b})$ is a
Poisson subalgebra of $R = P \bowtie Q$ and we have:
\begin{eqnarray*}
\bigl(r(q),\, q\bigl) \bigl(r(t),\, t\bigl) =
&\stackrel{\equref{multbpp1}}{=}& \bigl(r(q) r(t) + q
\triangleright r(t) + t \triangleright r(q), \, q \triangleleft
r(t) + t \triangleleft r(q) + qt\bigl)\\
&=& \bigl(r(t'), t'\bigl)\\
\left[\bigl(r(q),\, q\bigl),\, \bigl(r(t),\, t\bigl)\right]
&\stackrel{\equref{multbpp2}}{=}& \bigl( \, [r(q), \, r(t)] + q
\rightharpoonup r(t) - t \rightharpoonup r(q), \\
&& q \leftharpoonup r(t) - t \leftharpoonup r(q) + [q, \, t]\bigl)
\, = \bigl(r(t''), \, t''\bigl)
\end{eqnarray*}
for some $t'$, $t'' \in Q$. Hence, we have:
\begin{eqnarray}
r(t') = r(q) r(t) + q \triangleright r(t) + t \triangleright
r(q),&&
\quad t' = q \triangleleft r(t) + t \triangleleft r(q) + qt \eqlabel{ana5}\\
r(t'') = [r(q), \, r(t)] + q \rightharpoonup r(t) - t
\rightharpoonup r(q),&& \quad t'' = q \leftharpoonup r(t) - t
\leftharpoonup r(q) + [q, \, t] \eqlabel{ana6}
\end{eqnarray}
By applying $r$ to the second part of \equref{ana5}, respectively
\equref{ana6}, we obtain that $r$ is a deformation map. Moreover,
by a straightforward computation using \equref{srt1},
\equref{srt2}, \equref{ana5} and respectively \equref{ana6} it
follows that $b: Q_{r} \to \overline{Q}$ is a Poisson algebra map
and the proof is now finished.
\end{proof}

\begin{examples}\exlabel{def20}
Let $k$ be an algebraically closed field of characteristic zero
and let $\mathcal{H}^{1}_{1,1,1,1}$ be the bicrossed product
described in \exref{anaex1}. For any $(a_{1},\,a_{2}) \in k^{2}$
consider $r_{(a_{1},\,a_{2})} : \mathcal{H} \to k_{0}$ the
associated deformation map described in \exref{defmap1}. Then, the
corresponding $r_{(a_{1},\,a_{2})}$-deformation
$\mathcal{H}_{r_{(a_{1},\,a_{2}})}$ of the Heisenberg Poisson
algebra has the associative algebra structure and the Lie bracket
given as follows:
\begin{eqnarray*}
\mathcal{H}_{r_{(a_{1},\,a_{2}})}: \quad H_{1}^{2} = (2a_{1}+1) \,
H_{3}, \,\,\, H_{1}\,H_{2} = (a_{1}+a_{2})\,H_{3}, \quad [H_{1},
\, H_{2}] = (a_{2} - a_{1} + 1)\,H_{3}
\end{eqnarray*}
If $a_{1} = a_{2} = 0$ then $\mathcal{H}_{r_{(0,\,0})}$ coincides
with $\mathcal{H}$. Moreover, for any $a \in k-\{2^{-1}\}$ the
$r_{(a,\, -a)}$-deformation $\mathcal{H}_{r_{(a,\, -a)}}$ is
isomorphic to the Heisenberg Poisson algebra $\mathcal{H}$, the
isomorphism of Poisson algebras being given by:
\begin{eqnarray*}
\varphi: \mathcal{H}_{r_{(a,\, -a)}} \to \mathcal{H}, \quad
\varphi(H_{1}) = \alpha\, H_{1}, \,\, \varphi(H_{2}) = (-2a+1)
\alpha^{-1} \, H_{2}, \,\, \varphi(H_{3}) = H_{3}
\end{eqnarray*}
where $\alpha$ is a square root of $(2a+1)$.
However, if we consider $a_{1} = 1$ and $a_{2} = 0$ then we obtain
the Poisson algebra $\mathcal{H}_{r_{(1,\,0)}}$ with the trivial
Lie bracket and the multiplication given by: $H_{1}^{2} = 3\,H_{3}, \, H_{1}\,H_{2} = H_{3} $.
Therefore, $\mathcal{H}$ and $\mathcal{H}_{r_{(1,\,0)}}$ are not
isomorphic as Poisson algebras as a consequence of not being
isomorphic as Lie algebras.
\end{examples}

As the previous example shows it, different deformation maps can
give rise to isomorphic deformations. Therefore, in order to
provide the classification of complements we need to introduce the
following:

\begin{definition}\delabel{equivDef}
Let $\bigl(P, \, Q, \, \triangleleft, \, \triangleright, \,
\leftharpoonup, \, \rightharpoonup \bigl)$ be a matched pair of
Poisson algebras. Two deformation maps $r$, $r': Q \to P$ are
called \emph{equivalent} and we will denote this by $r \sim r'$ if
there exists $\sigma : Q \to Q$ a linear automorphism of $Q$ such
that for any $q$, $t \in Q$ we have:
\begin{eqnarray*}\eqlabel{equiv111}
\sigma(qt) - \sigma(q)\sigma(t) &=& \sigma(q) \triangleleft
r'\bigl(\sigma(t)\bigl) + \sigma(t) \triangleleft
r'\bigl(\sigma(q)\bigl) - \sigma \bigl(q \triangleleft r(t) \bigl)
- \sigma \bigl(t \triangleleft r(q)\bigl)\\
\sigma\bigl([q, \, t]\bigl) - \bigl[\sigma(q), \, \sigma(t)\bigl]
&=& \sigma(q) \leftharpoonup r'\bigl(\sigma(t)\bigl) - \sigma(t)
\leftharpoonup r'\bigl(\sigma(q)\bigl) + \sigma \bigl(t
\leftharpoonup r(q)\bigl) - \sigma \bigl(q \leftharpoonup
r(t)\bigl)
\end{eqnarray*}
\end{definition}

The main result of this section which provides the answer to the
bicrossed descent problem now follows:

\begin{theorem}\thlabel{clasif100}
Let $P$ be a Poisson subalgebra of $R$, $Q$ a $P$-complement of
$R$ and $\bigl(P, \, Q, \, \triangleleft, \, \triangleright, \,
\leftharpoonup, \, \rightharpoonup \bigl)$ the associated
canonical matched pair. Then $\sim$ is an equivalence relation on
the set $\mathcal{D}\mathcal{M}\bigl(P,\, Q ~|~ (\triangleleft, \,
\triangleright, \, \leftharpoonup, \, \rightharpoonup)\bigl)$ and
the map
$$
\mathcal{H}\mathcal{A}^{2} \bigl(P,\, Q ~|~ (\triangleleft, \,
\triangleright, \, \leftharpoonup, \, \rightharpoonup)\bigl) :=
\mathcal{D}\mathcal{M}\bigl(P,\, Q ~|~ (\triangleleft, \,
\triangleright, \, \leftharpoonup, \, \rightharpoonup)\bigl)/\sim
\to \mathcal{F}(P,\, R), \quad \overline{r} \mapsto Q_{r}
$$ is
a bijection, where $\mathcal{F}(P,\,R)$ is the set of
isomorphism classes of all $P$-complements of $R$. In particular,
the factorization index of $P$ in $R$ is computed by the formula:
$$\left[R : P\right]^{f} = |\mathcal{H}\mathcal{A}^{2} \bigl(P,\, Q ~|~
(\triangleleft, \, \triangleright, \, \leftharpoonup, \,
\rightharpoonup)\bigl)|$$
\end{theorem}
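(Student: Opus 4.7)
The plan is to prove Theorem \ref{th:clasif100} by establishing that linear automorphisms of $Q$ satisfying the conditions of Definition \ref{de:equivDef} are \emph{exactly} the Poisson algebra isomorphisms from $Q_r$ to $Q_{r'}$. Once this key correspondence is in place, the remaining claims (equivalence relation properties, well-definedness, injectivity, surjectivity, and the index formula) follow in a routine manner.

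First, I would prove the following key lemma: given $r, r' \in \mathcal{D}\mathcal{M}\bigl(P,\, Q ~|~ (\triangleleft, \, \triangleright, \, \leftharpoonup, \, \rightharpoonup)\bigl)$ and a linear automorphism $\sigma : Q \to Q$, the map $\sigma$ is a Poisson algebra isomorphism $Q_r \to Q_{r'}$ if and only if the two compatibility conditions of Definition \ref{de:equivDef} hold. This is a direct unpacking: using the explicit formulas \equref{srt1}-\equref{srt2} for the Poisson structures on $Q_r$ and $Q_{r'}$, the identity $\sigma(q \cdot_r t) = \sigma(q) \cdot_{r'} \sigma(t)$, after cancelling common terms and rearranging, is precisely the first compatibility condition of Definition \ref{de:equivDef}; likewise $\sigma([q,t]_r) = [\sigma(q), \sigma(t)]_{r'}$ translates into the second one.

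Given this correspondence, the three properties defining an equivalence relation follow immediately from the groupoid structure on Poisson algebra isomorphisms: reflexivity by taking $\sigma = \Id_Q$ and $r = r'$ (in which case both sides of the conditions vanish); symmetry by noting that if $\sigma : Q_r \to Q_{r'}$ is a Poisson isomorphism then $\sigma^{-1} : Q_{r'} \to Q_r$ is one as well, hence witnesses $r' \sim r$; transitivity by composing two such isomorphisms. The well-definedness of the map $\overline{r} \mapsto Q_r$ is built into the lemma since $r \sim r'$ produces a Poisson isomorphism $Q_r \cong Q_{r'}$, and its injectivity is the converse direction of the lemma. Surjectivity is furnished directly by Theorem \ref{th:comp} part $(2)$: any $P$-complement $\overline{Q}$ of $R$ is isomorphic as a Poisson algebra to $Q_r$ for some deformation map $r$. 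The cardinality formula for the factorization index $[R:P]^f$ then follows by reading off $|\mathcal{F}(P,\,R)|$ from the bijection.

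The main obstacle is the careful bookkeeping required in the key lemma. One must expand $\sigma(q\cdot_r t) - \sigma(q) \cdot_{r'} \sigma(t)$ term by term, keeping track of which contributions involve $\sigma$ applied to a $\triangleleft$-action and which involve the $\triangleleft$-action applied to $\sigma$, then verify that the resulting identity matches Definition \ref{de:equivDef} on the nose; the same procedure must be carried out for the Lie bracket, replacing $\triangleleft$ by $\leftharpoonup$. A minor subtlety is that the equivalence conditions are not manifestly symmetric in $r$ and $r'$, so in verifying symmetry of $\sim$ one has to rewrite the two conditions for $(r,r',\sigma)$ as the corresponding conditions for $(r',r,\sigma^{-1})$ by substituting $q \mapsto \sigma^{-1}(q)$, $t \mapsto \sigma^{-1}(t)$ and applying $\sigma^{-1}$; once the lemma is in place this is automatic, but it is the reason the definition is phrased the way it is.
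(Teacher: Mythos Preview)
Your proposal is correct and follows essentially the same approach as the paper: the paper's proof consists of the single observation that $r \sim r'$ in the sense of \deref{equivDef} if and only if $Q_r \cong Q_{r'}$ as Poisson algebras, together with an appeal to \thref{comp}. Your key lemma is precisely this observation (stated slightly more sharply by tracking the specific automorphism $\sigma$), and the remaining deductions you outline are exactly what the paper leaves implicit.
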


\begin{proof}
Two deformation maps $r$ and $r'$ are equivalent in the sense of
\deref{equivDef} if and only if the corresponding Poisson algebras
$Q_{r}$ and $Q_{r'}$ are isomorphic. The conclusion follows by
\thref{comp}.
\end{proof}

In what follows, for any $a \in k$, we denote by $\mathcal{H}_{a}$
the deformation of the Heisenberg Poisson algebra described in
\exref{def20} associated to the pair $(0,\,a) \in k^{2}$. More
precisely, the Poisson algebra structure on $\mathcal{H}_{a}$ is
as follows:
$$
\mathcal{H}_{a}: \quad H_{1}^{2} = H_{3},\,\,
H_{1}\,H_{2} = a\,H_{3},\,\, [H_{1}, \, H_{2}] = (a + 1)\,H_{3}
$$
Our next result provides a classification result for the
$k_{0}$-complements of $\mathcal{H}^{1}_{1,1,1,1}$ from
\exref{anaex1}. In particular, the deformations $\mathcal{H}_{a}$
provide an infinite family of non-isomorphic three dimensional
Poisson algebras - for a similar result in the setting of Lie
algebras see \cite{AMSigma}.

\begin{proposition} \prlabel{ultimapr}
Let $k$ be an algebraically closed field of characteristic zero
and $a$, $b \in k-\{-1,\, -2^{-1},\, 0 \}$. Then $\mathcal{H}_{a}$
and $\mathcal{H}_{b}$ are isomorphic as Poisson algebras if and
only if $a=b$ or $a = -b(2b+1)^{-1}$. In particular, the
factorization index $[\mathcal{H}^{1}_{1,1,1,1} : k_{0}]^{f}$ is
infinite.
\end{proposition}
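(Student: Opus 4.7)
The plan is to compute directly: any candidate Poisson isomorphism $\varphi : \mathcal{H}_a \to \mathcal{H}_b$ is severely constrained by the small number of non-zero structure constants in both algebras, and a short case analysis pins down exactly the two announced values. First I would locate $\varphi(H_3)$. Since $a, b \neq -1$, the element $H_3$ spans the centre of the Lie algebra $\mathcal{H}_c$ (because $[H_1, H_2] = (c+1) H_3 \neq 0$ and $[H_i, H_3] = 0$), so any Poisson isomorphism $\varphi$ satisfies $\varphi(H_3) = \gamma_3 H_3$ with $\gamma_3 \in k^{*}$. Writing $\varphi(H_1) = \alpha_1 H_1 + \alpha_2 H_2 + \alpha_3 H_3$ and $\varphi(H_2) = \beta_1 H_1 + \beta_2 H_2 + \beta_3 H_3$, expanding $\varphi(H_1)^2 = \varphi(H_3)$ in $\mathcal{H}_b$ forces $\gamma_3 = \alpha_1(\alpha_1 + 2b\alpha_2)$, while $\varphi(H_2)^2 = 0$ forces $\beta_1(\beta_1 + 2b\beta_2) = 0$. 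The remaining two non-trivial compatibilities, $\varphi(H_1 H_2) = a\gamma_3 H_3$ and $\varphi([H_1, H_2]) = (a+1)\gamma_3 H_3$, produce
\begin{align*}
a\gamma_3 &= \alpha_1\beta_1 + b(\alpha_1\beta_2 + \alpha_2\beta_1),\\
(a+1)\gamma_3 &= (b+1)(\alpha_1\beta_2 - \alpha_2\beta_1);
\end{align*}
all axioms involving $H_3$ are automatic, since every product and bracket touching $H_3$ vanishes in both algebras.

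The case split along $\beta_1(\beta_1 + 2b\beta_2) = 0$ now resolves the system. In the subcase $\beta_1 = 0$, dividing the two displayed equations by the non-zero factor $\alpha_1\beta_2$ (forced by invertibility) gives $b(a+1) = a(b+1)$, hence $a = b$. In the subcase $\beta_1 = -2b\beta_2$ with $\beta_2 \neq 0$, both right-hand sides acquire the common factor $\beta_2(\alpha_1 + 2b\alpha_2) = \gamma_3/\alpha_1$ and, after cancellation (using $b \neq -1/2$ and $\alpha_1 \neq 0$), the equations collapse to $a(2b+1) = -b$, i.e.\ $a = -b(2b+1)^{-1}$. Conversely, the identity realises the case $a = b$, and for $a = -b(2b+1)^{-1}$ the map
\[\varphi(H_1) := (2b+1) H_1, \qquad \varphi(H_2) := -2b H_1 + H_2, \qquad \varphi(H_3) := (2b+1)^2 H_3\]
is checked by direct substitution to be a Poisson isomorphism.

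For the final assertion, each $\mathcal{H}_a$ with $a \in k$ is a $k_0$-complement of $\mathcal{H}^{1}_{1,1,1,1}$ by \thref{comp}\,(1) and \exref{def20}, so the set of isomorphism classes of $k_0$-complements surjects onto the orbit space of the involution $\iota : a \mapsto -a(2a+1)^{-1}$ on $k \setminus \{-1, -\tfrac{1}{2}, 0\}$. The fixed points of $\iota$ satisfy $a(a+1) = 0$, hence lie outside this set, so every orbit has size exactly two and the orbit space is infinite; consequently $[\mathcal{H}^{1}_{1,1,1,1} : k_0]^{f} = \infty$. The main obstacle is the bookkeeping in the expansion of $\varphi(H_1)^2$, $\varphi(H_1)\varphi(H_2)$, $\varphi(H_2)^2$ and $[\varphi(H_1), \varphi(H_2)]$ in $\mathcal{H}_b$, together with verifying that invertibility of $\varphi$ is equivalent to the simultaneous non-vanishing of $\gamma_3$ and $\alpha_1\beta_2 - \alpha_2\beta_1$.
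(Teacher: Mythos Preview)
Your proof is correct and follows essentially the same approach as the paper: write $\varphi$ in coordinates, extract the four scalar equations from $\varphi(H_1)^2$, $\varphi(H_2)^2$, $\varphi(H_1H_2)$ and $\varphi([H_1,H_2])$, and split on the factorization of $\beta_1(\beta_1 + 2b\beta_2) = 0$ to obtain $a=b$ or $a=-b(2b+1)^{-1}$. Your version is in fact slightly more complete than the paper's, since you supply an explicit isomorphism for the converse direction and spell out the orbit-counting argument for the infinitude of $[\mathcal{H}^{1}_{1,1,1,1}:k_0]^f$, whereas the paper only establishes necessity and appeals to the infinitude of algebraically closed fields.
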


\begin{proof}
Suppose $\varphi: \mathcal{H}_{a} \to \mathcal{H}_{b}$ is a
Poisson algebra isomorphism, where $\varphi(H_{1}) =
\Sigma_{i=1}^{3} \alpha_{i} H_{i}$, $\varphi(H_{2}) =
\Sigma_{i=1}^{3} \beta_{i} H_{i}$, $\varphi(H_{3}) =
\Sigma_{i=1}^{3} \gamma_{i} H_{i}$, $\alpha_{i}$, $\beta_{i}$,
$\gamma_{i} \in k$. Thus \thref{clasif100} implies $r_{(0,\,a)}
\sim r_{(0,\,b)}$ and we obtain:
\begin{eqnarray}
\gamma_{1} = \gamma_{2} = 0,\quad \alpha_{1}^{2} +
2\,\alpha_{1}\,\alpha_{2}\,b = \gamma_{3},\quad \beta_{1}^{2} + 2\, \beta_{1}\,\beta_{2}\,b = 0\eqlabel{com1.2}\\
\alpha_{1}\,\beta_{1} + (\alpha_{1}\,\beta_{2} + \alpha_{2}\,
\beta_{1}) \,b = a\, \gamma_{3},\quad (\alpha_{1}\,\beta_{2} -
\alpha_{2}\, \beta_{1}) \,(b+1) = (a+1)\,
\gamma_{3}\eqlabel{com1.4}
\end{eqnarray}
To start with, we point out that since $\varphi$ is an isomorphism
we must have $\gamma_{3} \neq 0$. The last part of \equref{com1.2}
implies $\beta_{1} = 0$ or $\beta_{1} + 2\, \beta_{2} \, b = 0$.
Assume first that $\beta_{1} = 0$. As $\varphi$ is an isomorphism
it follows that $\alpha_{1} \neq 0$. Then the first part of
\equref{com1.4} comes down to $\beta_{2} =
ab^{-1}\,\alpha_{1}^{-1}\, \gamma_{3}$. Using the second part of
\equref{com1.2} we obtain
$$
\beta_{2} = ab^{-1}\,\alpha_{1}^{-1}\,\underline{\gamma_{3}}
\stackrel{\equref{com1.2}}{=}
ab^{-1}\,\alpha_{1}^{-1}\,(\alpha_{1}^{2} +
2\,\alpha_{1}\,\alpha_{2}\,b) = ab^{-1}\,(\alpha_{1} +
2\,\alpha_{2}\,b)
$$
Using the second part of \equref{com1.2} and the above formulae
for $\beta_{2}$, the last part of \equref{com1.4} becomes:
$$
(a+1)\, \gamma_{3} = \alpha_{1}\,ab^{-1}\, (\alpha_{1} +
2\,\alpha_{2}\,b) (b+1) = ab^{-1}\, (\underline{\alpha_{1}^{2} +
2\,\alpha_{1}\,\alpha_{2}\,b})(b+1) \stackrel{\equref{com1.2}}{=}
ab^{-1}\, \gamma_{3} \, (b+1)
$$
As $\gamma_{3} \neq 0$ we obtain $ab^{-1}\,(b+1) = a+1$ which
implies $a=b$. Assume now that $\beta_{1} \neq 0$. Therefore, by
the last part of \equref{com1.2} we get $\beta_{1} = -2\,
\beta_{2}\, b$. Now using this formulae for $\beta_{1}$,
\equref{com1.4} becomes:
$$
-\,\beta_{2}\,b(\alpha_{1} + 2\,\alpha_{2}\,b) = a\,
\gamma_{3}, \quad
\beta_{2}\,(\alpha_{1} + 2\,\alpha_{2}\,b) = (a+1)(b+1)^{-1}\,
\gamma_{3}
$$
As $\gamma_{3} \neq 0$, we obtain $-a = b(a+1)(b+1)^{-1}$ which
gives $a = -b(2b+1)^{-1}$. Therefore we proved that
$\mathcal{H}_{a}$ and $\mathcal{H}_{b}$ are isomorphic Poisson
algebras if and only if $a=b$ or $a = -b(2b+1)^{-1}$. Together
with the fact algebraically closed fields are infinite we obtain
$[\mathcal{H}^{1}_{1,1,1,1} : k_{0}]^{f}$ is infinite. This
finishes the proof.
\end{proof}

\end{document}